\documentclass[11pt,a4paper,reqno]{amsart}

\usepackage[a4paper,lmargin=2cm,rmargin=2cm,tmargin=4cm,bmargin=4cm]{geometry}

\usepackage[english]{babel}

\usepackage[centertags]{amsmath}
\usepackage{amsfonts}
\usepackage{amssymb}
\usepackage{amsthm}
\usepackage{dsfont}

\usepackage[dvipsnames]{xcolor}

\usepackage[normalem]{ulem}
\usepackage{bbm}

\usepackage{hyperref}
	\hypersetup{breaklinks=true,colorlinks=true,
linkcolor=MidnightBlue,citecolor=MidnightBlue,
urlcolor=MidnightBlue}
\usepackage{enumerate}

\parskip=1ex

\usepackage{tikz-cd,graphicx}
\usepackage{tikz}
\usetikzlibrary{shapes.geometric, arrows}

\tikzstyle{startstop} = [rectangle, rounded corners, minimum width=2cm, minimum height=1cm,text centered,text width=2cm, draw=black]
\tikzstyle{io} = [rectangle, rounded corners, minimum width=2cm, minimum height=1cm,text centered,text width=2cm, draw=black]

\tikzstyle{arrow} = [thick,->,>=stealth]
\tikzstyle{arrow2} = [dashed,->,>=stealth]

\usepackage{dot2texi}

\numberwithin{equation}{section}
\usepackage{enumerate}
\usepackage{mathtools}

\usepackage[shortlabels]{enumitem}



\newcommand{\abs}[1]{\left\lvert#1\right\rvert}
\newcommand{\norm}[1]{\left\lVert#1\right\rVert}

\newcommand{\N}{\mathbb{N}}

\DeclareMathOperator{\dent}{dent}
\DeclareMathOperator{\Id}{Id}
\DeclareMathOperator{\diam}{diam}
\DeclareMathOperator{\Lip}{Lip}

\DeclareMathOperator{\conv}{conv}
\DeclareMathOperator{\SCD}{SCD}

\DeclareMathOperator{\real}{Re}

\DeclareMathOperator{\Span}{span}
\DeclareMathOperator{\ext}{ext}
\DeclareMathOperator{\strexp}{str-exp}
\DeclareMathOperator{\BCP}{BCP}

\DeclareMathOperator{\re}{Re\,}

\newcommand{\R}{\mathbb{R}}
\renewcommand{\geq}{\geqslant}
\renewcommand{\leq}{\leqslant}

\theoremstyle{plain}
 \newtheorem{theorem}{Theorem}[section]
 \newtheorem{lemma}[theorem]{Lemma}
 \newtheorem{proposition}[theorem]{Proposition}
 \newtheorem{corollary}[theorem]{Corollary}

\theoremstyle{definition}
 \newtheorem{definition}[theorem]{Definition}
 \newtheorem*{definition*}{Definition}
 \newtheorem{remark}[theorem]{Remark}
 \newtheorem{example}[theorem]{Example}

\newcommand{\pten}{\,\ensuremath{\widehat{\otimes}_\pi}\,}

\begin{document}

\title{Slicely countably determined points in Banach spaces}

\author[Langemets]{Johann Langemets}
\address[Langemets and L\~{o}o]{Institute of Mathematics and Statistics, University of Tartu, Narva mnt 18, 51009 Tartu, Estonia}
\email{johann.langemets@ut.ee}
\urladdr{\url{https://johannlangemets.wordpress.com/}}
\urladdr{
\href{https://orcid.org/0000-0001-9649-7282}{ORCID: \texttt{0000-0001-9649-7282} } }

\author[L\~{o}o]{Marcus L\~{o}o}
\email{marcus.loo@ut.ee}
\urladdr{
\href{https://orcid.org/0009-0003-1306-5639}{ORCID: \texttt{0009-0003-1306-5639} } }

\author[Mart\'in]{Miguel Mart\'in}
\address[Mart\'in and Rueda Zoca]{Universidad de Granada, Facultad de Ciencias. Departamento de Analisis Matem\'atico, 18071, Granada, Spain}
\email{mmartins@ugr.es}
\urladdr{\url{https://www.ugr.es/local/mmartins/}}
\urladdr{
\href{http://orcid.org/0000-0003-4502-798X}{ORCID: \texttt{0000-0003-4502-798X} } }

\author[Rueda Zoca]{Abraham Rueda Zoca}
\email{abraham.rueda@um.es}
\urladdr{\url{https://arzenglish.wordpress.com}}
\urladdr{
\href{https://orcid.org/0000-0003-0718-1353}{ORCID: \texttt{0000-0003-0718-1353} } }

\subjclass[2020]{Primary 46B20; Secondary 46B04, 46B22, 46B28, 26A16}

\keywords{Slicely countably determined set, Radon--Nikod\'ym property, Asplund spaces, Daugavet property, $L_1$-predual} 

\date{November 5th, 2023}

\maketitle

\begin{abstract}
We introduce slicely countably determined points (SCD points) of a bounded and convex subset of a Banach space which extends the notions of denting points, strongly regular points and much more. We completely characterize SCD points in the unit balls of $L_1$-preduals. We study SCD points in direct sums of Banach spaces and obtain that an infinite sum of Banach spaces may have an SCD point despite the fact that none of its components have it. We then prove sufficient conditions to get that an elementary tensor $x\otimes y$ is an SCD point in the unit ball of the projective tensor product $X \widehat{\otimes}_\pi Y$. Regarding Lipschitz-free spaces on complete metric spaces, we show that norm-one SCD points of their unit balls are exactly the ones that can be approximated by convex combinations of denting points of the unit ball. Finally, as applications, we prove a new inheritance result for the Daugavet property to its subspaces, we show that separable Banach spaces for which every convex series of slices intersects the unit sphere must contain an isomorphic copy of $\ell_1$, and we get pointwise conditions on an operator on a Banach space with the Daugavet property to satisfy the Daugavet equation.
\end{abstract}

\section{Introduction}

A well-investigated geometric property of Banach spaces is the Radon--Nikod\'ym property (RNP for short) because it emphasizes the interplay between the topological, geometrical, and measure theoretical structures of a Banach space. The RNP is known to have plentiful equivalent characterizations \cite[p.~217]{VectorMeasures}, e.g., a Banach space has the RNP if and only if every non-empty closed bounded convex subset is dentable.

A closely related and equally important geometric property of Banach spaces is Asplundness. A Banach space $X$ is Asplund if and only if every continuous convex real-valued function is Fr\'{e}chet differentiable on a dense set. It is important to recall that a Banach space is Asplund if and only if its dual space has the RNP.

In \cite{aviles_slicely_2010}, A.~Avil\'es, V.~Kadets, M.~Mart\'in, J.~Mer\'i, and V.~Shepelska introduced a (non-trivial) class of Banach spaces which contains both separable spaces with the RNP and separable Asplund spaces. Actually, the definition is given for bounded convex subsets as follows. A bounded convex subset $A$ of a (real or complex) Banach space $X$ is an \emph{SCD set} if there is a sequence $\{S_n\colon n\in\mathbb N\}$ of slices of $A$ such that $S\subseteq \overline{\conv}(B)$ whenever $B\subseteq A$ intersects all the $S_n$'s \cite[Definition~2.5]{aviles_slicely_2010}. Observe that every SCD set is clearly separable. It is proved in the same paper that the sequence of slices in the definition of SCD set can be replaced equivalently by a sequence of relative weakly open sets, allowing to get further examples. The space $X$ is called \emph{slicely countably determined} (\emph{SCD} for short) if every bounded convex subset $A$ of $X$ is SCD. Examples of SCD spaces are, on the one hand, those separable spaces with the RNP (actually, separable spaces with the convex point of continuity property, aka CPCP or even strongly regular spaces) and, on the other hand, separable Asplund spaces (actually, separable spaces which do not contain an isomorphic copy $\ell_1$) \cite[Examples~3.2]{aviles_slicely_2010}. The unit ball of every Banach space with a one-unconditional basis is SCD \cite[Theorem~3.1]{kadets_lushness_2013} and so is the unit ball of a locally uniformly rotund (aka LUR) separable Banach space \cite[Example~2.10]{aviles_slicely_2010}. It is an open question whether every Banach space with an unconditional basis is an SCD space.
On the contrary, the spaces $C[0,1]$ and $L_1[0,1]$ fail to be SCD as their unit balls fail to be SCD sets \cite[Example~2.13]{aviles_slicely_2010}. The same happens for all (separable) Banach spaces with the Daugavet property. Recall that a Banach space $X$ has the \emph{Daugavet property} \cite{kadets1997Daugavet} if every rank-one bounded linear operator $T$ on $X$ satisfies the norm equality
\begin{equation}\label{eq:DE}\tag{DE}
\|\Id + T\|=1+\|T\|
\end{equation}
where $\Id$ is the identity operator; in this case, all compact or weakly compact operators also satisfy the same norm equality \cite{kadets1997Daugavet}. Let us comment that these positive and negative examples show that being an SCD space is a non-trivial isomorphic property which covers the RNP and Asplundness (and much more!) in the separable case being, as far as we know, the first property of this kind. SCD sets and SCD spaces have been deeply used to get interesting results on the Daugavet property, numerical index one spaces, spear operators\ldots (see \cite{aviles_slicely_2010, KMMP-Spear, kadets_lushness_2013,kadets_operations_2018}, among other references).

The aim of this paper is to introduce and examine SCD points (see Definition~\ref{definition: SCD-point}) in order to study the SCD phenomena also in non-separable spaces. Equipped with this new pointwise view, we can even deduce new results about separable Banach spaces as well.

We present now the main contributions and the organization of the paper. Section~\ref{sec: SCD points} is devoted to introducing the concept of an SCD point, its equivalent formulations, first examples, and some preliminary properties, as the fact that the set of SCD points is always relatively closed and convex, and the relation between the SCD points of a set and those of its closure. In the second part of this section, we focus on SCD points of the unit ball. In particular, we prove that if the dual of a Banach space fails the $(-1)$-$\BCP$, then its unit ball has no SCD points (see Theorem~\ref{thm: sufficient condition to have empty set of SCD points}). This theorem is an extension of the result that unit balls of Daugavet spaces fail to have any SCD point (Example~\ref{newexample:Daguavet-SCD-empty}). As a consequence, we can characterize all the SCD points in an $L_1$-predual (see Theorem~\ref{thereom: L_1 preduals}).

In Section~\ref{sec: SCD points in direct sums}, we prove some stability results of SCD points by considering direct sums of Banach spaces. In particular, we will describe SCD points in the unit ball of $\ell_1$-sums and $\ell_\infty$-sums of Banach spaces (see Propositions \ref{proposition: infty sum equivalent condition} and \ref{proposition: 1-sum equivalent condition}). Additionally, we prove a result, which guarantees the existence of SCD points in the unit ball of any infinite $\ell_p$-sum of Banach spaces for $1<p<\infty$ (see Proposition~\ref{proposition: 0 is always SCD in infinite absolute sum}). This enables us, for instance, to find a separable Banach space having only one SCD point in its unit ball.

Section~\ref{sec: SCD points in tensor products} deals with SCD points in the unit ball of the projective tensor product of Banach spaces. We prove two sufficient conditions for an elementary tensor to be an SCD point in the unit ball (see Theorems~\ref{theorem: SCD tensor denting is SCD} and~\ref{theorem: a SCD b preserved extreme then a tensor b SCD}), and using these results, we answer some questions in the context of SCD sets as well.

Section~\ref{sec: SCD points in Lipschitz-free spaces} is dedicated to the characterization of SCD points in the unit ball of Lipschitz-free spaces. In particular, it is shown that, for complete metric spaces, norm-one SCD points of the unit ball are precisely the ones that can be approximated by convex combinations of denting points of the unit ball (see Theorem~\ref{theo:freecomplete}). Additionally, for compact metric spaces, a similar result holds when we consider denting points instead of strongly exposed points (see Theorem~\ref{theo:freecompact}). Using this method, we are able to partially describe SCD sets in Lipschitz-free spaces too.

In Section~\ref{sec: applications}, we provide applications of the theory developed for SCD points. Firstly, a new inheritance result for the Daugavet property to its subspaces is proved (see Theorem~\ref{theo:inheridaugascd}). Secondly, we are able prove that separable Banach spaces in which every convex series of slices of the unit ball intersects the unit sphere must contain an isomorphic copy of $\ell_1$ (see Theorem~\ref{thm: separable X contains ell1 whenever X* fails (-1)-BCP}). Finally, we also provide, using SCD points, a sufficient condition for a bounded linear operator on a Banach space with the Daugavet property to satisfy \eqref{eq:DE} (see Theorem~\ref{theorem: SCD points guarantee DE}).

We finish this introduction with some notation. Throughout the text, we will use standard notation, e.g.\ as in the textbook \cite{BST2011}. For a given Banach space $X$ over the field $\mathbb{K}$ ($\mathbb{R}$ or $\mathbb{C}$), we denote respectively by $B_X$ and $S_X$ the closed unit ball and the unit sphere of $X$, and we denote by $X^*$ the topological dual of $X$. Given two Banach spaces $X$ and $Y$, we denote by $\mathcal{L}(X, Y)$ the space of bounded linear operators $T\colon X \to Y$, endowed with the operator norm and we denote by $\mathcal{B}(X\times Y)$ the space of bounded bilinear maps $B\colon X \times Y \rightarrow \mathbb{K}$, endowed with its usual norm given by $$\|B\|=\sup\{|B(x,y)|\colon x\in B_X,\, y\in B_Y\}.$$

If $A$ is a non-empty subset of a Banach space $X$, we will denote respectively by $\conv(A)$, $\overline{\conv}(A)$, and $\diam(A)$ the convex hull, the closed convex hull, and the diameter of $A$. The set of extreme points of $A$ will be denoted as $\ext(A)$. By a \emph{slice} of $A$ we mean a non-empty intersection of $A$ with an open half-space, which can be always written in the form
\[
S(A,x^*,\alpha) := \left\{x\in A\colon \real x^*(x) > \sup_{b\in A}\real x^*(b)-\alpha\right\},
\]
for suitable $x^*\in X^*$ and $\alpha >0$. A \emph{convex combination of slices} (resp., \emph{convex series of slices}) of $A$ is a set of the form
\[
\sum_{i=1}^n \lambda_iS(A,x^*_i, \alpha_i) \qquad \bigg(\text{resp.}, \  \sum_{i=1}^{\infty} \lambda_iS(A,x^*_i, \alpha_i)\bigg),
\]
where $\lambda_1,\dots,\lambda_n\geq 0$ and $\sum_{i=1}^n \lambda_i = 1$ (resp., $\lambda_i\geq 0$ and $\sum_{i=1}^{\infty} \lambda_i = 1$).

Let $A\subseteq X$ be convex, and bounded. A point $x\in A$ is called a \emph{strongly exposed point} of $A$, if there exists $x^*\in X^*$ satisfying that for all sequences $(x_n)\subset A$ with $\re x^*(x_n)\rightarrow \re x^*(x)$, it follows that $x_n\rightarrow x$. A weaker version of a strongly exposed point is a denting point. An element $x\in A$ is called a \emph{denting point} of $A$, if for every $\varepsilon>0$ there exists a slice $S$ of $A$ such that $x\in S$ and $\diam(S)\leq \varepsilon$. We denote respectively by $\strexp(A)$ and $\dent(A)$ the set of all strongly exposed points and the set of all denting points of $A$.
Finally, recall that an extreme point of $A$ is called a \emph{preserved extreme point}, if it is an extreme point of the weak* closure of $A$ in $X^{**}$. It is well known that the open slices containing an preserved extreme point $x_0\in A$ form a basis of the weak topology of $A$ at $x_0$ \cite{LLT1988}.

Finally, let us give the notation for absolute sums of Banach spaces. If $X$ and $Y$ are Banach spaces and $N$ is an absolute norm on $\R^2$, we denote by $X\oplus_N Y$ the product space $X\times Y$ endowed with the norm $\|(x,y)\|=N(\|x\|,\|y\|)$ and we call it the ($N$-)\emph{absolute sum} of $X$ and $Y$. When $N$ is one of the $\ell_p$-norms in $\R^2$ (for $1\leq p\leq \infty$) we just write $X\oplus_p Y$. When dealing with a sequence of spaces $\{X_n\}_{n\in \N}$, given a Banach space of sequences  $(E,|\cdot|)$, we write
$$
\left[\bigoplus\nolimits_{n=1}^{\infty} X_n\right]_{E}
$$
to denote the $E$-sum of the sequence of spaces: those $(x_n)\in \prod\nolimits_{n=1}^\infty X_n$ such that
$$
\|(x_n)\|_E:=\bigl|\bigl(\|x_n||\bigr)\bigr|<\infty.
$$
In this way, $\left(\left[\bigoplus\nolimits_{n=1}^{\infty} X_n\right]_{E},\|\cdot\|_E\right)$ is a Banach space. The most interesting cases for us are $E=c_0$ and $E=\ell_p$ with $1\leq p \leq \infty$. For background on absolute norms and absolute sums, we refer the reader to \cite{Bonsall1,Hardtke,MPR2, MPR3}.

\section{Slicely countably determined points}\label{sec: SCD points}
We introduce here the main concept to deal in this manuscript: SCD points. We divide this preliminary section in two parts: first we give the definition, properties, and examples for general bounded convex sets; later, in a second subsection, we particularize the study for SCD points of the unit ball.

\subsection{Definition and first examples}
Firstly, we adapt the idea of a determining sequence of subsets to a point.

\begin{definition}\label{definition: determining family for point}
Let $X$ be a Banach space and $A\subseteq X$ bounded and convex. We say that a countable collection $\{V_n\colon n\in\mathbb{N}\}$ of (non-empty) subsets of $A$ is \textit{determining} for a point $a\in A$ if $a\in\overline{\conv}(B)$ whenever $B\subseteq A$ intersecting all the sets $V_n$.
\end{definition}

\begin{remark}\label{remark: replace convex set SCD-point}
{\slshape We can assume the set $B$ in Definition \ref{definition: determining family for point} to be convex.\ } Indeed, assume that we have $\{V_n\colon n\in\mathbb{N}\}$ such that for every convex subset $C\subseteq A$, satisfying $C\cap V_n\neq \emptyset$ for every $n\in\mathbb{N}$, we have $a\in \overline{\conv}(C)=\overline{C}$. Let $B\subseteq A$ be a set satisfying $B\cap V_n\neq \emptyset$ for every $n\in\mathbb{N}$. Then $\conv(B)\subseteq A$ and $\conv(B)$ also intersects every $V_n$, hence $a\in\overline{\conv}(\conv(B))=\overline{\conv}(B)$, meaning that the original condition also holds. The converse implication is evident.
\end{remark}

First we give some equivalent conditions for a sequence of subsets to be determining for a point. These characterizations will be used throughout the paper. Note that this discussion is similar to the one given in \cite{aviles_slicely_2010} for sequences determining sets, see \cite[Definition 2.1 and Proposition 2.2]{aviles_slicely_2010}.

\begin{proposition}\label{proposition: equivalent conditions of being determining for point}
Let $X$ be a Banach spaces, let $A\subseteq X$ be bounded and convex, and $a\in A$. For a sequence $\{V_n\colon n\in\mathbb{N}\}$ of subsets of $A$, the following conditions are equivalent:
\begin{enumerate}
    \item[\rm{(i)}] $\{V_n\colon n\in\mathbb{N}\}$ is determining for $a$;
    \item[\rm{(ii)}] for every slice $S$ of $A$ with $a\in S$, there is $m\in\mathbb{N}$ such that $V_m\subseteq S$;
    \item[\rm{(iii)}] if $x_n\in V_n$ for every $n\in \mathbb{N}$, then $a\in\overline{\conv}(\{x_n\colon n\in \mathbb{N}\})$.
\end{enumerate}
\end{proposition}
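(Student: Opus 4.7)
My plan is to prove the cyclic chain of implications $(\mathrm{i}) \Rightarrow (\mathrm{ii}) \Rightarrow (\mathrm{iii}) \Rightarrow (\mathrm{i})$, mirroring the analogous characterization for determining sequences of sets in \cite[Proposition~2.2]{aviles_slicely_2010}. All three implications are fairly short once one sets up the right contradiction, so the main task is bookkeeping.

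For $(\mathrm{i}) \Rightarrow (\mathrm{ii})$, I would argue by contrapositive. Suppose $S = S(A,x^*,\alpha)$ is a slice with $a \in S$ and no $V_m$ is contained in $S$. Then for each $n\in\N$ I can choose $x_n \in V_n \setminus S$, and I set $B = \{x_n : n \in \N\}$. Since $B$ intersects every $V_n$, the determining property gives $a \in \overline{\conv}(B)$. However, $B$ lies in the closed half-space $H = \{x \in X : \real x^*(x) \leq \sup_{b\in A}\real x^*(b) - \alpha\}$, which is closed and convex in $X$, so $\overline{\conv}(B) \subseteq H$. This contradicts $a \in S$.

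For $(\mathrm{ii}) \Rightarrow (\mathrm{iii})$, given $x_n \in V_n$ for each $n$, suppose $a \notin \overline{\conv}(\{x_n : n \in \N\})$. By the Hahn--Banach separation theorem, there exist $x^* \in X^*$ and $\beta \in \R$ with $\real x^*(a) > \beta \geq \real x^*(x_n)$ for all $n$. Setting $\alpha = \sup_{b\in A}\real x^*(b) - \beta > 0$ produces a slice $S(A,x^*,\alpha)$ of $A$ containing $a$ but avoiding every $x_n$. By $(\mathrm{ii})$ some $V_m$ is contained in this slice, forcing $x_m$ into it, which is a contradiction.

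For $(\mathrm{iii}) \Rightarrow (\mathrm{i})$, let $B \subseteq A$ intersect every $V_n$. Choosing $x_n \in B \cap V_n$ gives a sequence with $x_n \in V_n$, so by $(\mathrm{iii})$ we have $a \in \overline{\conv}(\{x_n : n \in \N\}) \subseteq \overline{\conv}(B)$, as desired.

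I do not anticipate any real obstacle: the only point that requires slight care is verifying in $(\mathrm{i}) \Rightarrow (\mathrm{ii})$ that $\overline{\conv}(B)$ stays in the closed half-space complementary to $S$, which works because that half-space is closed and convex in all of $X$ (and not only relative to $A$). The invocation of Hahn--Banach in $(\mathrm{ii}) \Rightarrow (\mathrm{iii})$ is standard; the strict/non-strict inequality can be arranged since $a$ is separated from a closed convex set that does not contain it.
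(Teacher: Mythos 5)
Your proof is correct and follows essentially the same route as the paper's: (i)$\Rightarrow$(ii) by producing a subset of $A$ that meets every $V_n$ while avoiding the slice $S$ (you take the countable set $\{x_n\}$ where the paper takes all of $A\setminus S$; both lie in the complementary closed half-space), (ii)$\Rightarrow$(iii) by Hahn--Banach separation, and (iii)$\Rightarrow$(i) immediately. If anything, your choice $\alpha=\sup_{b\in A}\real x^*(b)-\beta$ in the second implication is slightly more careful than the paper's $\alpha=\real x^*(a)-\sup_{c\in C}\real x^*(c)$, which as written only places $a$ in the slice when $\real x^*(a)$ is close to $\sup_{b\in A}\real x^*(b)$.
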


\begin{proof}
\parskip=0ex
    (i) $\Rightarrow$ (ii). Let $S$ be a slice of $A$ with $a\in S$. Assume on the contrary that $V_n\not\subseteq S$ for every $n\in\mathbb{N}$. Therefore, $(A\setminus S)\cap V_n\neq \emptyset$ for all $n\in\mathbb{N}$. By (i) $a\in \overline{\conv}(A\setminus S)=A\setminus S$, a contradiction.

    (ii) $\Rightarrow$ (iii). Let us prove the contrapositive. Assume that for every $n\in\mathbb{N}$ there exists $x_n\in V_n$ such that $a\not\in C:=\overline{\conv}(\{x_n\colon n\in\mathbb{N}\})$. By Hahn-Banach separation theorem, there exists a functional $x^*\in S_{X^*}$ such that
    \[
    \real x^*(a) > \underset{c\in C}{\sup}\real x^*(c).
    \]
    Let $\alpha := \real x^*(a)-\sup_{c\in C} \real x^*(c)>0$. Then, $a\in S(A,x^*,\alpha)$, but for arbitrary $n\in\mathbb{N}$ we get $V_n\not\subseteq S(A,x^*,\alpha)$, because $x_n\in V_n$ and
    \[
    \real x^*(x_n) \leq \underset{c\in C}{\sup}\real x^*(c) =  \real x^*(a) - \alpha,
    \]
    hence $x_n\not\in S(A,x^*,\alpha)$.

   (iii) $\Rightarrow$ (i). Immediate.
\end{proof}

The following observation is clear.

\begin{remark}\label{newremark:smaller sets are also determining}
{\slshape Let $X$ be a Banach spaces, let $A\subseteq X$ be bounded and convex, and $a\in A$. If $\{V_n\colon n\in\mathbb{N}\}$ is determining for $a$, then so is every countable collection $\{W_n\colon n\in \mathbb{N}\}$ such that $W_n\subset V_n$ for every $n\in \mathbb{N}$.}
\end{remark}

We can now give the main definition of the paper.

\begin{definition}\label{definition: SCD-point}
    Let $X$ be a Banach space and assume that $A\subseteq X$ is bounded and convex. A point $a\in A$ is called a \textit{slicely countably determined point of $A$} (an \textit{SCD point of $A$} for short), if there exists a determining sequence of slices of $A$ for the point $a$. We denote the set of all $\SCD$ points of $A$ by $\SCD(A)$.
\end{definition}

The following easy lemmata on the set of SCD points of a given bounded convex set will be very useful later on and will provide the first examples. The first result shows properties of the set $\SCD(A)$ for a given set $A$.

\begin{lemma}\label{lemma: properties of SCD(A)}
Let $X$ be a Banach space and assume that $A\subseteq X$ is bounded and convex. Then, $\SCD(A)$ is convex and closed (relative to $A$). Moreover, if $A$ is balanced, then so is $\SCD(A)$.
\end{lemma}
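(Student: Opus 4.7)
The plan is to prove the three assertions --- convexity of $\SCD(A)$, its relative closedness in $A$, and balancedness when $A$ is balanced --- separately, in each case exploiting the slice characterization of determining sequences given by Proposition \ref{proposition: equivalent conditions of being determining for point}.

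For convexity, I would take $a, b \in \SCD(A)$ with determining sequences of slices $\{S_n\}$ and $\{T_n\}$ respectively, concatenate them into a single countable family of slices, and claim it is determining for every convex combination $c := \lambda a + (1-\lambda) b$ with $\lambda\in[0,1]$. The key observation is that any slice $S(A,x^*,\alpha)$ containing $c$ must, since $\real x^*(c)$ is a convex combination of $\real x^*(a)$ and $\real x^*(b)$, also contain at least one of $a$ or $b$; then condition (ii) of Proposition \ref{proposition: equivalent conditions of being determining for point} applied to that point supplies a slice from the concatenated family lying inside $S$, which is the same condition for $c$.

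For closedness, I would pick a sequence $a_k \in \SCD(A)$ converging to some $a \in A$, choose a determining sequence of slices $\{S_n^{(k)}\}_n$ for each $a_k$, and form the countable union $\{S_n^{(k)}\colon n, k \in \mathbb{N}\}$. Any slice of $A$ is relatively norm-open in $A$, so any slice containing $a$ also contains $a_k$ for all sufficiently large $k$, and then the determining property of $\{S_n^{(k)}\}_n$ for $a_k$ produces some $S_n^{(k)}$ contained in that slice, establishing the determining condition for $a$.

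For the balanced case (supposing $\SCD(A) \neq \emptyset$, else the statement is vacuous), I would first treat scalars with $|\lambda|=1$ using that the isometric linear bijection $\phi(x) = \lambda x$ carries $A$ onto $A$ (by balancedness) and sends the slice $S(A, x^*, \alpha)$ to the slice $S(A, \bar\lambda x^*, \alpha)$, via the identity $\real(\bar\lambda x^*(\lambda z)) = |\lambda|^2 \real x^*(z) = \real x^*(z)$; consequently $\{\lambda S_n\}$ is determining for $\lambda a$ whenever $\{S_n\}$ is determining for $a$. Combining this with the already established convexity, any $a \in \SCD(A)$ gives $-a \in \SCD(A)$, hence $0 = \tfrac{1}{2}(a + (-a)) \in \SCD(A)$. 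Finally, for $0<|\lambda|<1$, the decomposition $\lambda a = |\lambda|(\lambda/|\lambda|)a + (1-|\lambda|)\cdot 0$ realises $\lambda a$ as a convex combination of two SCD points, and the case $\lambda=0$ is already covered. The only mildly delicate step is realising that the balanced case must \emph{follow} convexity in the proof, since reaching the SCD point $0$ inside $\SCD(A)$ requires averaging $a$ and $-a$.
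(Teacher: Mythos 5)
Your proposal is correct and follows essentially the same route as the paper: concatenating the determining families for both closedness and convexity, and reducing the balanced case to unimodular scalars (via the slices $S(A,\bar\lambda x^*_n,\alpha_n)$) before filling in the disc using convexity. The only cosmetic difference is that you verify convexity through condition (ii) of Proposition~\ref{proposition: equivalent conditions of being determining for point} (a slice containing $\lambda a+(1-\lambda)b$ must contain $a$ or $b$), whereas the paper argues directly that $a,b\in\overline{\conv}(B)$ forces $\lambda a+(1-\lambda)b\in\overline{\conv}(B)$; both are sound.
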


\begin{proof}
\parskip=0ex
Firstly, it is easy to see that $\SCD(A)$ is a (relative) closed set. Indeed, suppose that $(a_n)\subset \SCD(A)$ and $\lim_n a_n=a\in A$. For every $n\in \N$, there is a countable family $\{S_{n,m}\colon m\in \N\}$ which is determining for $a_n$. Now, the countable family
$$
\bigl\{S_{n,m}\colon m\in \N,\ n\in \N\}
$$
is determining for $a$: given a slice $S$ of $A$ containing $a$, as $S$ is relatively open in $A$, then $a_n\in S$ for some $n\in \N$ and, as $a_n\in \SCD(A)$, there is $m\in \N$ such that $S_{n,m}\subset S$; hence, $a\in \SCD(A)$.

For the convexity, fix $a,b\in \SCD(A)$ and $\lambda\in(0,1)$. Let us show that $\lambda a+(1-\lambda)b\in \SCD(A)$. By assumption there exist two determining sequences of slices for $a$ and for $b$. Consider the union of these sequences. If we have $B\subseteq A$ such that $B$ intersects all slices in the union, we have $a\in\overline{\conv}(B)$ and $b\in\overline{\conv}(B)$. It is clear that in this case $\lambda a+(1-\lambda)b\in \overline{\conv}(B)$. We have shown that $\SCD(A)$ is convex.

Assume finally that $A$ is balanced. We aim to show that $\SCD(A)$ is also balanced. By the convexity of $A$, it suffices to show that for every $a\in \SCD(A)$ and $|\lambda|=1$ one has that $\lambda a\in \SCD(A)$. Since $a\in \SCD(A)$, there is a determining sequence of slices $S_n:=S(A,x^*_n, \alpha_n)$ for $a$. It is routine to show that $T_n:=S(A, \overline{\lambda}x^*_n, \alpha_n)$ is a determining sequence of slices for $\lambda a$.
\end{proof}

The next lemma allows us to deal in some cases just with closed sets.

\begin{lemma}\label{newlemma:closure}
Let $X$ be a Banach space and let $A\subset X$ be convex and bounded. Then,
$$
\SCD\bigl(\overline{A}\bigr)\cap A=\SCD(A).
$$
\end{lemma}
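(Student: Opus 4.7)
The plan is to prove both inclusions separately, using the fact that, since $A$ is dense in $\overline{A}$ and linear functionals are continuous, $\sup_A \real x^* = \sup_{\overline{A}} \real x^*$ for every $x^* \in X^*$. Consequently, for any slice $S(A,x^*,\alpha)$ of $A$ there is a corresponding slice $S(\overline{A},x^*,\alpha)$ of $\overline{A}$ (and vice versa) with the same functional and parameter, and the two agree on $A$.

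For the easier inclusion $\SCD(\overline{A}) \cap A \subseteq \SCD(A)$, given $a \in \SCD(\overline{A}) \cap A$ with a determining sequence of slices $\{\tilde S_n = S(\overline{A},x_n^*,\alpha_n)\}$ of $\overline{A}$ for $a$, I would set $S_n := \tilde S_n \cap A = S(A,x_n^*,\alpha_n)$ and verify Proposition~\ref{proposition: equivalent conditions of being determining for point}(ii) directly: any slice $S = S(A,x^*,\alpha)$ of $A$ containing $a$ lifts to the slice $\tilde S = S(\overline{A},x^*,\alpha)$ of $\overline{A}$ containing $a$, so some $\tilde S_n \subset \tilde S$, and intersecting with $A$ yields $S_n \subset S$, as required.

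The reverse inclusion $\SCD(A) \subseteq \SCD(\overline{A}) \cap A$ is slightly subtler, because a slice of $\overline{A}$ containing $a$ need not be the trivial extension of a slice of $A$ containing $a$ with the same parameter: slices are defined by a strict inequality, and that inequality can survive inside $A$ only with a smaller margin. My plan is a shrinking trick. Starting from a determining family $\{S_n = S(A,x_n^*,\alpha_n)\}$ for $a$ in $A$, I set $\tilde S_n := S(\overline{A},x_n^*,\alpha_n)$. Given a slice $T = S(\overline{A},x^*,\alpha)$ of $\overline{A}$ containing $a$, the strict inequality at $a$ lets me pick $\beta \in (0,\alpha)$ with $a \in S(A,x^*,\beta)$; the determining property of $\{S_n\}$ then supplies $n$ with $S_n \subset S(A,x^*,\beta)$. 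For any $x \in \tilde S_n$, I approximate $x$ by a net in $A$; continuity of $x_n^*$ combined with the strictness of the defining inequality forces the net to eventually lie in $S_n$, and hence to satisfy $\real x^*(\,\cdot\,) > \sup_A \real x^* - \beta$. Passing to the limit degrades this to $\real x^*(x) \geq \sup_{\overline{A}} \real x^* - \beta > \sup_{\overline{A}} \real x^* - \alpha$, which places $x$ in $T$, so $\tilde S_n \subset T$ and the criterion is verified.

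I expect the only real obstacle to be the shrinking step in the second inclusion, since it is the place where the gap between strict and non-strict inequalities has to be absorbed; beyond that, the argument is a formal transfer of determining sequences across the closure operation via Proposition~\ref{proposition: equivalent conditions of being determining for point}.
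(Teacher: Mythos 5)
Your proof is correct. Both halves rest on the same observation that drives the paper's proof --- $\sup_A \real x^* = \sup_{\overline{A}}\real x^*$, so slices transfer across the closure with the same functional and parameter --- and your first inclusion is essentially the paper's (you phrase it through criterion (ii) of Proposition~\ref{proposition: equivalent conditions of being determining for point}, the paper through the raw definition with test sets $B$; these are interchangeable). Where you genuinely diverge is in the harder inclusion $\SCD(A)\subseteq\SCD\bigl(\overline{A}\bigr)$. The paper verifies the set-based definition: given $B\subseteq\overline{A}$ meeting every $S\bigl(\overline{A},x_n^*,\alpha_n\bigr)$, it fattens $B$ to $A\cap\bigl(B+\tfrac{1}{m}B_X\bigr)$, which meets every $S(A,x_n^*,\alpha_n)$ by openness of slices and density of $A$, concludes $a\in\overline{\conv}\bigl(B+\tfrac{1}{m}B_X\bigr)$ for all $m$, and lets $m\to\infty$. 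You instead verify the slice-inclusion criterion directly in $\overline{A}$, absorbing the loss of strictness under limits by shrinking the slice parameter from $\alpha$ to some $\beta<\alpha$ before approximating points of $\tilde S_n$ by points of $A$. The two perturbations are dual --- the paper perturbs the test set, you perturb the slice width --- and both are sound; yours stays entirely inside the slice criterion and avoids the Minkowski-sum manipulation, while the paper's avoids the $\beta$-bookkeeping. Your step-by-step chain ($x\in\tilde S_n$ forces approximants eventually into $S_n\subseteq S(A,x^*,\beta)$, and the limit gives $\real x^*(x)\geq\sup_{\overline{A}}\real x^*-\beta>\sup_{\overline{A}}\real x^*-\alpha$) is exactly where the weak inequality is safely converted back to the strict one, so the only point you flagged as delicate is handled correctly.
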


\begin{proof}
Suppose that $a\in A\cap \SCD\bigl(\overline{A}\bigr)$. Then, there is a sequence of slices $\{S_n\}$ of $\overline{A}$ which is determining for $a$ in $\overline{A}$. Define $T_n=S_n\cap A$ for every $n\in \N$, which are (non-empty) slices of $A$, and let us prove that $\{T_n\}$ is determining for $a$ in $A$. Indeed, let $B\subset A$ such that $B\cap T_n\neq \emptyset$ for every $n\in \N$. Then, $B\subset \overline{A}$ clearly satisfies that $B\cap S_n\neq \emptyset$ for every $n\in \N$, hence $a\in \overline{\conv}(B)$, giving that $\{T_n\}$ is determining.

Conversely, suppose that $a\in\SCD(A)$ and let us show that $a\in \SCD\bigl(\overline{A}\bigr)$. Let $S_n:=S(A,x^*_n,\alpha_n)$ with $x^*_n\in X^*$, $\alpha_n>0$ for evert $n\in \N$ be a determining sequence for $a$ in $A$. Define for each $n\in \N$ the slice
$$
T_n=S\bigl(\overline{A},x^*_n,\alpha_n)
$$
which are slices of $\overline{A}$. Let now $B\subset \overline{A}$ satisfies that $B\cap
T_n=\emptyset$ for every $n\in \N$. Taking into account that $\sup_{A}\re x^*_n=\sup_{\overline{A}}\re x^*_n$ and that the slices are open, it follows that
$$
\bigl[A\cap (B + \tfrac{1}{m}B_X)\bigl] \cap S_n \neq \emptyset
$$
for every $m\in \N$. We get that $a\in \overline{\conv}\bigl(B+\tfrac{1}{m}B_X\bigr)$ for every $m\in\N$, hence $a\in \overline{\conv}(B)$.
\end{proof}

The next lemma describes a precise connection between SCD sets and SCD points, and confirms that Definition~\ref{definition: SCD-point} is indeed a natural extension.

\begin{lemma}\label{lemma: connection between SCD sets and SCD points}
Let $X$ be a Banach space and assume that $A\subseteq X$ is bounded and convex. Then, the following statements hold:
\begin{enumerate}
\item If $A$ is an $\SCD$ set, then every $a\in A$ is an $\SCD$ point.
\item If there is a countable dense subset of $A$ consisting in SCD points of $A$, then $A$ is SCD.
\item In particular, if every $a\in A$ is an $\SCD$ point and $A$ is separable, then $A$ is an $\SCD$ set.
\end{enumerate}
\end{lemma}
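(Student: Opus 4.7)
The plan is to prove the three parts in order, each being essentially a short verification using Definition~\ref{definition: SCD-point}, the definition of SCD set from the introduction, and Proposition~\ref{proposition: equivalent conditions of being determining for point}.

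For part (1), I would argue directly: if $A$ is SCD, fix a sequence of slices $\{S_n\colon n\in\mathbb{N}\}$ witnessing the SCD property of $A$. Then for every $B\subseteq A$ that meets each $S_n$, one has $A\subseteq \overline{\conv}(B)$, so in particular $a\in \overline{\conv}(B)$ for every $a\in A$. Thus the same sequence $\{S_n\}$ is determining for each point $a\in A$, proving that every point of $A$ is an SCD point.

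For part (2), I would take a countable dense subset $\{a_k\colon k\in\mathbb{N}\}$ of $A$ with each $a_k\in\SCD(A)$. For every $k$, pick a determining sequence of slices $\{S_{k,n}\colon n\in\mathbb{N}\}$ for $a_k$. Reindex the double sequence as a single countable collection $\mathcal{S}:=\{S_{k,n}\colon k,n\in\mathbb{N}\}$ of slices of $A$. Given any $B\subseteq A$ meeting every member of $\mathcal{S}$, fix $k\in\mathbb{N}$; then $B$ meets every $S_{k,n}$, so by the determining property $a_k\in \overline{\conv}(B)$. Since this holds for every $k$ and $\{a_k\}$ is dense in $A$, one obtains $A\subseteq\overline{\conv}(B)$, showing that $\mathcal{S}$ witnesses that $A$ is SCD.

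For part (3), note that if $A$ is separable and every point of $A$ is an SCD point, then there is a countable dense subset of $A$ consisting of SCD points of $A$, so part (2) applies. This gives the statement.

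There is no real obstacle here: the proof is essentially a bookkeeping exercise combining the SCD set definition (which requires a single sequence of slices working for all of $A$) with the SCD point definition (where the sequence may depend on the point), exploiting that a countable union of countable families is countable and that membership in $\overline{\conv}(B)$ is closed.
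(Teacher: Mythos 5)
Your proposal is correct and follows essentially the same route as the paper: part (1) is immediate from the definitions, and part (2) is proved by reindexing the union of the determining families attached to a countable dense set of SCD points. The only cosmetic difference is in the final verification of (2): the paper checks the equivalent slice condition (every slice of $A$ contains some member of the collection, via density), while you verify the convex-hull condition directly using that $\overline{\conv}(B)$ is closed; both are valid.
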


\begin{proof}
\parskip=0ex
(1) is clear by the corresponding definitions. Let us prove (2).
We need to find a determining sequence of slices for $A$. By assumption we have a countable set $\{x_n\colon n\in\mathbb{N}\}\subset \SCD(A)$, which is dense in $A$. Therefore, for every $x_n$ we have a determining sequence of slices $\{S_n^m\colon m\in\mathbb{N}\}$. Let us now consider the countable collection of slices $\{S_n^m \colon n,m\in\mathbb{N}\}$. To show that this collection is determining for $A$, we take an arbitrary slice $S$ of $A$. Since the set $\{x_n \colon n\in\mathbb{N}\}$ is dense, there exists $n\in\mathbb{N}$ such that $x_n\in S$ and then, by assumption, there also exists $m\in\mathbb{N}$ so that $S_n^m \subseteq S$.
\end{proof}

As a consequence, we get some more families of examples of SCD points by just using \cite[Examples~3.2]{aviles_slicely_2010} and Lemma~\ref{newlemma:closure}.

\begin{example}\label{example:examples of SCD sets}
{\slshape The following Banach spaces satisfy that $\SCD(A)=A$ for every convex bounded subset $A$:
\begin{enumerate}
  \item[(a)] separable Banach spaces with the RNP (or even with the CPCP, or strongly regular);
  \item[(b)] Asplund spaces (or even separable spaces which do not contain copies of $\ell_1$).
\end{enumerate}
}
\end{example}

On the other hand, it is immediate from the definition that denting points are SCD points, regardless of the separability or not of the set. Actually, an slightly more general definition than denting point also implies to be SCD point. Let $X$ be a Banach space and let $A$ be a bounded convex subset of $X$. An element $a\in A$ is said to be a \emph{quasi-denting} point of $A$ \cite{generating} if for every $\varepsilon>0$ there is a slice of $A$ contained in $a +\varepsilon B_X$. In particular, every denting point is quasi denting.

\begin{example}\label{example:dentingpoints}
{\slshape Let $X$ be a Banach space, let $A$ be a bounded convex subset of $X$. Then, every quasi-denting point of $A$ is SCD; in particular, $\dent(A)\subseteq \SCD(A)$.\ } Indeed, if $a\in A$ is quasi-denting, then there is a sequence $\{S_n\colon n\in\mathbb{N}\}$ of slices of $A$ such that $S_n\subset \bigl(a+\tfrac1n B_X\bigr)\cap A$. If $S$ is a slice of $A$ containing $a$, then $S$ is relatively open in $A$ and contains $a$, hence it has to contain $\bigl(a+\tfrac1n B_X\bigr)\cap A$ for some $n\in \mathbb{N}$. A fortiori, $S$ contains $S_n$.
\end{example}

As a consequence of the above example, Lemma~\ref{lemma: properties of SCD(A)}, and Lemma~\ref{newlemma:closure}, we get the following extension of Example~\ref{example:examples of SCD sets}.

\begin{example}\label{example:RNP}
{\slshape Let $X$ be a Banach space and let $A$ be bounded convex subset of $X$ such that $A\subset \overline{\conv}\bigl(\dent\bigl(\overline{A}\bigr)\bigr)$. Then, $\SCD(A)=A$. In particular, if $X$ has the RNP, then $\SCD(A)=A$ for every bounded convex subset $A$ of $X$.}
\end{example}

One might wonder whether in Condition (ii) of Proposition~\ref{proposition: equivalent conditions of being determining for point}, the determined point has to belong to all the sets determining it or, at least, to be close to one of them. The following example shows that each member of the determining sequence of slices can be far from the determined point. We need some notation. A point $x\in S_X$ of a Banach space $X$ is said to be a \emph{Daugavet point} \cite{abrahamsen_haller_lima_pirk_2020} if for every slice $S$ of $B_X$ and for every $\varepsilon>0$, there exists $y\in S$ such that $\|x-y\|\geq 2-\varepsilon$. Moreover, a Banach space $X$ has the Daugavet property if and only if every $x\in S_X$ is a Daugavet point (this is already contained in  \cite[Lemma~2.2]{kadets1997Daugavet}, see  \cite[Proposition~1.2]{abrahamsen_haller_lima_pirk_2020}).

\begin{example}
    {\slshape Let $X$ be a separable Banach space with the RNP and containing a Daugavet point $x_0\in S_X$ (a space like this is constructed in \cite[Example~3.1]{veeorg2021characterizations}). Then, for every $\varepsilon>0$, there exists a sequence of slices $\{S_n\colon n\in\mathbb{N}\}\subseteq B_X$ which is determining for $x_0$ such that $d(x_0, S_n)>2-\varepsilon$ for every $n\in\mathbb{N}$.}
\end{example}

\begin{proof}
\parskip=0ex
Fix $\varepsilon>0$. We have, by Lemma~\ref{lemma: connection between SCD sets and SCD points} or Example~\ref{example:RNP}, a determining sequence of slices $\{T_n\colon n\in\mathbb{N}\}$ for $x_0$. Since $B_X = \overline{\conv}(\dent(B_X))$, for every $n\in \N$ there exists $x_n\in T_n\cap \dent(B_X)$. This enables us to find a slice $S_n$ such that $x_n\in S_n\subseteq T_n$ and $\diam (S_n) < \varepsilon/2$ for every $n\in\mathbb{N}$. Observe that in this case the slices $\{S_n\colon n\in\mathbb{N}\}$ also determine the point $x_0$ (use Remark~\ref{newremark:smaller sets are also determining}), and since $x_0$ is a Daugavet point, we have $\norm{x_0-x_n}=2$ for every $n\in\mathbb{N}$ by \cite[Proposition~3.1]{jung2021daugavet}. In conclusion
        \[
        d(x_0,S_n) \geq \norm{x_0-x_n}-\diam(S_n) \geq 2- \frac{\varepsilon}{2}> 2-\varepsilon.\qedhere
        \]
\end{proof}

Our next aim is to prove that in the definition of a slicely countably determined point, one can equivalently replace slices with non-empty relatively weakly open sets or with convex combinations of slices.

\begin{proposition}\label{proposition: S_n iff W_n iff C_n}
Let $X$ be a Banach space and assume that $A\subseteq X$ is bounded and convex. Then, the following conditions are equivalent:
\begin{enumerate}
    \item[\rm{(i)}] $a \in \SCD(A)$;
    \item[\rm{(ii)}] there exists a sequence of relatively weakly open sets which is determining for $a$;
    \item[\rm{(iii)}] there exists a sequence of convex combinations of slices which is determining for $a$.
\end{enumerate}
\end{proposition}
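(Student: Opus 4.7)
My plan is to establish the three implications (i) $\Rightarrow$ (ii) $\Rightarrow$ (iii) $\Rightarrow$ (i). The first is essentially free, since every slice of $A$ is a non-empty relatively weakly open subset of $A$; any determining sequence of slices is therefore already a determining sequence of relatively weakly open sets.

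The crucial step is (ii) $\Rightarrow$ (iii), and here I would invoke the classical Bourgain-type lemma: every non-empty relatively weakly open subset of a bounded convex set contains a convex combination of slices (this is precisely the tool used in \cite{aviles_slicely_2010} to obtain the analogous equivalence at the level of SCD sets). Given a determining sequence $\{W_n\}$ of relatively weakly open subsets of $A$ for $a$, for each $n\in\mathbb{N}$ I would pick a convex combination of slices $C_n\subseteq W_n$; then Remark \ref{newremark:smaller sets are also determining} yields that $\{C_n\}$ is still determining for $a$, which is (iii).

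For (iii) $\Rightarrow$ (i), let $\{C_n\}$ be a determining sequence of convex combinations of slices for $a$, written as $C_n=\sum_{i=1}^{k_n}\lambda_i^n S_i^n$ with $\lambda_i^n\geq 0$, $\sum_i\lambda_i^n=1$, and each $S_i^n$ a slice of $A$. I would show that the countable family of slices
\[
\mathcal{S}:=\{S_i^n\colon n\in\mathbb{N},\ 1\leq i\leq k_n\}
\]
determines $a$ by verifying condition (iii) of Proposition \ref{proposition: equivalent conditions of being determining for point}. Pick $x_i^n\in S_i^n$ arbitrarily and set $c_n:=\sum_{i=1}^{k_n}\lambda_i^n x_i^n\in C_n$; the hypothesis on $\{C_n\}$ gives $a\in\overline{\conv}\{c_n\colon n\in\mathbb{N}\}$, and since each $c_n$ lies in $\conv\{x_i^n\colon 1\leq i\leq k_n\}$, we conclude $a\in\overline{\conv}\{x_i^n\colon n\in\mathbb{N},\ 1\leq i\leq k_n\}$, exactly the condition required. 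The only real obstacle in the whole argument is the Bourgain-type lemma invoked in (ii) $\Rightarrow$ (iii); the remaining two implications are routine bookkeeping with Proposition \ref{proposition: equivalent conditions of being determining for point} and Remark \ref{newremark:smaller sets are also determining}.
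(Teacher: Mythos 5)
Your proposal is correct and follows essentially the same route as the paper: (ii) $\Rightarrow$ (iii) via Bourgain's lemma together with Remark~\ref{newremark:smaller sets are also determining}, and (iii) $\Rightarrow$ (i) by observing that the convex combination $\sum_i \lambda_i^n x_i^n$ of points chosen from the individual slices lies in $C_n$. The only cosmetic difference is that you verify (iii) $\Rightarrow$ (i) through condition (iii) of Proposition~\ref{proposition: equivalent conditions of being determining for point}, whereas the paper argues directly from the definition with a set $B$ and an auxiliary convex hull $\widehat{B}$; both amount to the same computation.
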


\begin{proof}
\parskip=0ex
Firstly, note that (i) $\Rightarrow$ (ii) and (i) $\Rightarrow$ (iii) are obvious.

(ii) $\Rightarrow$ (iii). Assume that there exists determining sequence of relatively weakly open sets sets $\{W_n\colon n\in\mathbb{N}\}$ for the point $a$. By Bourgain's lemma \cite[Lemma~II.1]{GGMS1987}, every $W_n$ contains a convex combination of slices $C_n$ of $A$. But then the sequence $\{C_n\colon n\in \N\}$ is determining for $a$ (just use Remark~\ref{newremark:smaller sets are also determining}).

(iii) $\Rightarrow$ (i). Assume that there exists a sequence of convex combinations of slices $\{C_n\colon n\in\mathbb{N}\}$, which is determining for point $a$. For every $n\in\mathbb{N}$, we have
\begin{equation}\label{equation: definition of C_n}
C_n = \sum_{i=1}^{k_n}\lambda_i^n S_i^n,\quad \sum_{i=1}^{k_n}\lambda_i^n=1,\quad k_n\in\mathbb{N},
\end{equation}
where $S^n_i$, $i\in\{1,\dots,k_n\}$ are slices of $A$. Let us show that the countable family $\{S_i^n\colon n\in\mathbb{N},\; i\in\{1,\dots,k_n\}\}$ is determining for $a$. For that, let $B\subseteq A$ be convex such that $B\cap S_i^n\neq \emptyset$ for every $n\in\mathbb{N}$ and every $i\in\{1,\dots,k_n\}$. Hence there exists $b_i^n\in B\cap S^n_i$, from which we can construct another set
\[
\widehat{B} := \conv\{b^n_i\colon n\in\mathbb{N},\; i\in\{1,\dots,k_n\}\}\subseteq B\subseteq A.
\]
Let see that $\widehat{B}\cap C_n\neq \emptyset$ for every $n\in\mathbb{N}$. Indeed, by fixing $n\in\mathbb{N}$ arbitrarily, and using the fact that $b^n_i\in S^n_i$, we get
\[
\sum_{i=1}^{k_n}\lambda_i^n b_i^n \in \sum_{i=1}^{k_n} \lambda_i^n S_i^n = C_n
\]
and, since $\widehat{B}$ is convex, we have $\sum_{i=1}^{k_n}\lambda_i^n b_i^n\in \widehat{B}$. As $\{C_n\colon n\in \N\}$ is determining for $a$,  $a\in \overline{\conv}(\widehat{B})\subseteq \overline{\conv}(B)$.
\end{proof}

\begin{remark}
{\slshape
The claim in Remark \ref{remark: replace convex set SCD-point} holds also for determining sequences of relatively weakly open sets and convex combinations of slices sets, meaning we can assume convexity of the set $B$ in the definition of a determining sequence.}
\end{remark}

\begin{remark}
{\slshape With absolutely the same proof of (iii) $\Rightarrow$ (i) in Proposition~\ref{proposition: S_n iff W_n iff C_n}, one may show that $a\in \SCD(A)$ if there is a determining sequence of convex series of slices. Anyhow, we have not encounter any advantage in working with determining sequences of convex series of slices than working with determining sequences of convex combination of slices.}
\end{remark}

We may now get advantage of Proposition~\ref{proposition: S_n iff W_n iff C_n} to get more families of SCD points. Recall that a point $a$ of a closed convex bounded set $A$ is called a \emph{strongly regular point of A} if there exists a convex combination of slices of $A$ containing $a$ and having arbitrarily small diameter \cite[Remark in p.~29]{Rosenthal1988}. We will now prove that strongly regular points are SCD, hence also denting points (but this was already showed in Example~\ref{example:dentingpoints}).

\begin{proposition}\label{proposition: SCS is SCD}
Let $X$ be a Banach space and $A\subseteq X$ bounded and convex. If $a\in A$ satisfies that for every $\varepsilon>0$ there exists a convex combination $C$ of slices of $A$ such that $C\subseteq a+\varepsilon B_X$, then $a$ is an $\SCD$ point of $A$. In particular, strongly regular points and denting points are SCD points.
\end{proposition}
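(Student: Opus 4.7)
The plan is to use Proposition~\ref{proposition: S_n iff W_n iff C_n}, which allows us to witness SCD-ness by a determining sequence of convex combinations of slices instead of slices. So it suffices to build, for the given point $a$, such a sequence out of the convex combinations provided by the hypothesis.

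Concretely, I would proceed as follows. For each $n\in\mathbb{N}$, apply the hypothesis with $\varepsilon = 1/n$ to obtain a convex combination of slices $C_n$ of $A$ with $C_n \subseteq a + \tfrac{1}{n} B_X$. I claim that the sequence $\{C_n\colon n\in\mathbb{N}\}$ is determining for $a$ in the sense of Definition~\ref{definition: determining family for point}. Indeed, let $B\subseteq A$ be any subset intersecting every $C_n$, and for each $n$ pick $b_n \in B\cap C_n$. Then $\|b_n - a\|\leq 1/n$, hence $b_n \to a$, which gives $a \in \overline{B}\subseteq \overline{\conv}(B)$. By Proposition~\ref{proposition: S_n iff W_n iff C_n}(iii)$\Rightarrow$(i), this yields $a\in\SCD(A)$.

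For the ``in particular'' statement, strongly regular points satisfy the stated hypothesis directly from their definition, since for every $\varepsilon>0$ a convex combination of slices containing $a$ and of diameter at most $\varepsilon$ is automatically contained in $a+\varepsilon B_X$. Denting points are a special case: a slice containing $a$ of diameter at most $\varepsilon$ is in particular a (trivial) convex combination of slices lying in $a+\varepsilon B_X$.

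There is no real obstacle here; the only subtle point is realizing that we do \emph{not} need to verify condition (ii) of Proposition~\ref{proposition: equivalent conditions of being determining for point} for each $C_n$ individually (which would require knowing the $C_n$'s sit inside every slice through $a$), but rather only the general determining condition of Definition~\ref{definition: determining family for point}, which follows at once from the diameter estimate. The passage from convex combinations of slices back to actual slices is already handled inside Proposition~\ref{proposition: S_n iff W_n iff C_n}.
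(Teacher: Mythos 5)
Your proof is correct and follows essentially the same route as the paper: take $C_n\subseteq a+\tfrac1n B_X$, observe that any $B$ meeting every $C_n$ contains points $b_n$ with $\|b_n-a\|\leq 1/n$ so $a\in\overline{B}\subseteq\overline{\conv}(B)$, and then invoke the equivalence of Proposition~\ref{proposition: S_n iff W_n iff C_n} to pass from determining convex combinations of slices to SCD-ness. The remarks on the ``in particular'' part are also accurate.
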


\begin{proof}
\parskip=0ex
For every $n\in \mathbb N$, we can find a convex combination $C_n$ of slices of $A$ such that $C_n\subseteq B(a,\frac{1}{n})$. By Proposition~\ref{proposition: S_n iff W_n iff C_n}, it suffices to show that the sequence $\{C_n\colon n\in\mathbb{N}\}$ is determining for $a$. Let $B\subseteq A$ be convex and $B\cap C_n\neq \emptyset$ for every $n\in\mathbb{N}$, hence there exists $b_n\in B\cap C_n$. We need to show that $a\in\overline{\conv}(B) = \overline{B}$. For $m\in \N$, we have
\[
\norm{a-b_m} \leq\frac{1}{m} < \varepsilon,
\]
therefore $a\in\overline{B}$.
\end{proof}

Recall that a closed convex bounded subset $A$ of a Banach space $X$ is said to be \emph{strongly regular} if every non-empty convex subset $L$ of $A$ contain convex combinations of slices of $L$ of arbitrary small diameter. $A$ is said to be a \emph{CPCP set} if for every non-empty convex subset $L$ of $A$ there exist relative weak open subsets of $L$ of arbitrary small diameter. CPCP sets are strongly regular (by Bourgain's lemma), but the reciprocal is not true. We refer to \cite{GGMS1987} for background. As a consequence of Proposition~\ref{proposition: SCS is SCD}, Lemmas \ref{lemma: properties of SCD(A)} and \ref{newlemma:closure}, and the fact that when $A$ is strongly regular, the set of strongly regular points of $A$ is norm dense \cite[Proposition~III.6]{GGMS1987}, we get the following.

\begin{example}\label{example:stronglyregular-set}
{\slshape Let $A$ be a convex bounded subset of a Banach space such that $\overline{A}$ is strongly regular (in particular, a CPCP set). Then, $\SCD(A)=A$.}
\end{example}

We now localize the concept of a (countable) $\pi$-base which was used in \cite[Proposition~2.21]{aviles_slicely_2010} in order to show that sets that do not contain $\ell_1$-sequences are SCD sets.

\begin{definition}
Let $X$ be a Banach space, $A\subseteq X$ bounded and convex, and $a\in A$. A \emph{local $\pi$-base} (for the weak topology of $A$) at $a$ is a family $\{W_j\colon j\in J\}\subseteq A$ of relatively weakly open subsets such that for every relatively weakly open set $W\subseteq A$ with $a\in W$, there exists $j\in J$ such that $W_j\subseteq W$.
\end{definition}

From the definition of a local $\pi$-base, the following observation is immediate.

\begin{lemma}
Let $X$ be a Banach space and $A\subseteq X$ bounded and convex. If $a\in A$ has a countable local $\pi$-base, then $a$ is an $\SCD$ point of $A$. Moreover, for every relatively weakly open set $W\subseteq A$ with $a\in W$, we have that $a\in \SCD(W)$.
\end{lemma}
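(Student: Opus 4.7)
The plan is to recognize a countable local $\pi$-base as being, essentially by definition, a determining sequence of relatively weakly open sets for $a$, and then to invoke the characterization of SCD points given in Proposition~\ref{proposition: S_n iff W_n iff C_n}.

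More precisely, let $\{W_n : n\in\mathbb{N}\}$ be a countable local $\pi$-base at $a$. First, I would check that $\{W_n : n\in\mathbb{N}\}$ is itself a determining sequence for $a$ by verifying condition~(ii) of Proposition~\ref{proposition: equivalent conditions of being determining for point}: given any slice $S$ of $A$ containing $a$, the set $S$ is in particular a relatively weakly open neighborhood of $a$ in $A$, so the defining property of a local $\pi$-base produces some $n\in\mathbb{N}$ with $W_n\subseteq S$. By the implication (ii)$\Rightarrow$(i) of Proposition~\ref{proposition: equivalent conditions of being determining for point}, the sequence $\{W_n\}$ is determining for $a$, and since its members are relatively weakly open, the implication (ii)$\Rightarrow$(i) of Proposition~\ref{proposition: S_n iff W_n iff C_n} then yields $a\in\SCD(A)$.

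For the moreover part, fix a relatively weakly open set $W\subseteq A$ with $a\in W$. I would pass to the subfamily $\mathcal{F} := \{W_n : W_n\subseteq W\}$, which is nonempty because $W$ itself is a relatively weakly open neighborhood of $a$ in $A$. The key observation is that $\mathcal{F}$ is a countable local $\pi$-base at $a$ for the relative weak topology of $W$: any relatively weakly open $U\subseteq W$ containing $a$ is also relatively weakly open in $A$, so there exists $n$ with $W_n\subseteq U\subseteq W$, whence $W_n\in\mathcal{F}$. Applying the first part of the lemma with $W$ in place of $A$ then delivers $a\in\SCD(W)$.

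There is no real obstacle in this proof: the whole content lies in the trivial observation that slices are particular cases of relatively weakly open sets, so the $\pi$-base property transfers directly into the hypothesis of Proposition~\ref{proposition: equivalent conditions of being determining for point}, after which Proposition~\ref{proposition: S_n iff W_n iff C_n} does the remaining work.
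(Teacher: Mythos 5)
Your argument is correct and is exactly the ``immediate'' verification that the paper leaves to the reader: since slices are relatively weakly open, a countable local $\pi$-base at $a$ satisfies condition (ii) of Proposition~\ref{proposition: equivalent conditions of being determining for point}, and Proposition~\ref{proposition: S_n iff W_n iff C_n} upgrades the resulting determining sequence of relatively weakly open sets to one of slices. For the ``moreover'' part the only caveat is that $W$ need not be convex, so invoking the first part verbatim with $W$ in place of $A$ is slightly informal---but that looseness is already present in the statement of the lemma itself, and your passage to the subfamily $\{W_n\colon W_n\subseteq W\}$ is the intended argument.
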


For preserved extreme points, the above result is actually a characterization of being SCD point.

\begin{proposition}
If $a\in A$ is an $\SCD$ point and a preserved extreme point, then $a$ has a countable local $\pi$-base.
\end{proposition}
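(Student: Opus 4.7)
The plan is to show that the very determining sequence of slices witnessing $a\in\SCD(A)$ already serves as a countable local $\pi$-base for the weak topology of $A$ at $a$. Since $a$ is an SCD point, we fix a sequence of slices $\{S_n\colon n\in\mathbb{N}\}$ of $A$ which is determining for $a$. Each $S_n$ is a relatively weakly open subset of $A$, so it is a legitimate candidate for a member of a local $\pi$-base for the weak topology.

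Now I would use the hypothesis that $a$ is a preserved extreme point of $A$. By the result of Lin, Lin, and Troyanski cited in the excerpt, the open slices of $A$ containing $a$ form a basis for the weak topology of $A$ at $a$. Hence, given any relatively weakly open set $W\subseteq A$ with $a\in W$, there is a slice $S$ of $A$ with $a\in S\subseteq W$.

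At this point I would invoke Proposition~\ref{proposition: equivalent conditions of being determining for point}, specifically the equivalence (i)$\Leftrightarrow$(ii): since $\{S_n\colon n\in\mathbb{N}\}$ is determining for $a$ and $S$ is a slice of $A$ containing $a$, there exists $m\in\mathbb{N}$ with $S_m\subseteq S\subseteq W$. This exhibits the countable family $\{S_n\colon n\in\mathbb{N}\}$ as a local $\pi$-base at $a$, finishing the proof.

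The argument is essentially immediate once both ingredients are lined up; the only conceptual point one has to notice is that the preserved extremality hypothesis is precisely what reduces the task of refining weakly open neighborhoods to the task of refining slices, which is exactly what the determining-sequence condition delivers via Proposition~\ref{proposition: equivalent conditions of being determining for point}(ii). No further obstacle should arise.
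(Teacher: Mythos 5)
Your proof is correct and follows essentially the same route as the paper: use the preserved extreme point hypothesis (via Lin--Lin--Troyanski) to find a slice $S$ with $a\in S\subseteq W$, then apply the implication (i)$\Rightarrow$(ii) of Proposition~\ref{proposition: equivalent conditions of being determining for point} to place some member of the determining sequence inside $S$. The only cosmetic difference is that the paper takes a determining sequence of relatively weakly open sets while you work directly with the determining sequence of slices from the definition; since slices are relatively weakly open, both families qualify as local $\pi$-bases and the argument is identical.
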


\begin{proof}
\parskip=0ex
Since $a$ is an $\SCD$ point there is a determining sequence $\{W_n\colon n\in \mathbb N\}\subseteq A$ of relatively weakly open subsets. Let $W\subseteq A$ be a relatively weakly open subset containing $a$. As $a$ is a preserved extreme point, we can find a slice $S$ of $A$ such that $a\in S\subseteq W$. Finally, there is $m\in \mathbb N$ such that $W_m\subseteq S\subseteq W$, because $a$ is an SCD point.
\end{proof}

We end this subsection by pointing out that we do not know whether there exist SCD points that do not have countable local $\pi$-bases.

\subsection{SCD points of the unit ball}
We start with an easy consequence of Lemma~\ref{lemma: properties of SCD(A)} which gives us a convenient way to check whether the unit ball of a Banach space has any SCD points.

\begin{corollary}\label{remark: 0 being SCD}
Let $X$ be a Banach space. Then, $\SCD(B_X)=\emptyset$ if and only if $0\not\in\SCD(B_X)$.
\end{corollary}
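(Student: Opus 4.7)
The ``only if'' direction is trivial. For the ``if'' direction, I would proceed by contrapositive: assume $\SCD(B_X)\neq \emptyset$ and derive $0\in \SCD(B_X)$.

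The plan is to exploit the two algebraic properties of $\SCD(B_X)$ granted by Lemma~\ref{lemma: properties of SCD(A)}. Since $B_X$ is convex, bounded and balanced, that lemma tells us that $\SCD(B_X)$ is both convex and balanced. Pick any $a\in \SCD(B_X)$. Applying the balanced property with $\lambda=-1$ (so $|\lambda|=1$) yields $-a\in \SCD(B_X)$. Then, by convexity,
\[
0 \;=\; \tfrac{1}{2}a + \tfrac{1}{2}(-a) \;\in\; \SCD(B_X),
\]
which finishes the proof.

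There is essentially no obstacle here; the statement is an immediate bookkeeping corollary of the structural properties already established in Lemma~\ref{lemma: properties of SCD(A)}, and the usefulness of the corollary is practical rather than technical (it reduces checking emptiness of $\SCD(B_X)$ to a single test point).
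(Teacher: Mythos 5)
Your proof is correct and is precisely the argument the paper intends: the corollary is stated there as an immediate consequence of Lemma~\ref{lemma: properties of SCD(A)} without a written proof, and your use of balancedness (to get $-a\in\SCD(B_X)$) followed by convexity (to get $0=\tfrac12 a+\tfrac12(-a)$) is exactly that consequence.
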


Our first aim is to investigate in detail when the unit ball of a Banach space fails to contain any SCD point. The first result in this line can be get by just having a sight to the proof of \cite[Example~2.13]{aviles_slicely_2010}: it is actually shown there that no point of the unit ball of a Banach space with the Daugavet property is a SCD point. Let us state the result for further reference.

\begin{example}\label{newexample:Daguavet-SCD-empty}
{\slshape Let $X$ be a Banach space with the Daugavet property. Then, $\SCD(B_X)=\emptyset$.}
\end{example}

We now want to extend the above result to a more general setting. For this, we recall a class of Banach spaces concerning the $(-1)$-ball covering property. We encourage the reader to consult \cite{ciaci2022characterization} and \cite{BCPInitial} for further reading.

\begin{definition}[{\cite[Definition~2.3]{ciaci2022characterization}}] \label{proposition: characterization of failing -1-BCP}
A Banach space $X$ is said to \textit{fail $(-1)$-ball covering property} (\emph{fail $(-1)$-$\BCP$} for short) if for every separable subspace $Y$ of $X$, there exists $x\in S_X$ such that the equality
\begin{equation}\label{equation: -1-BCP condition}
\norm{y+\lambda x} = \norm{y} + \abs{\lambda}
\end{equation}
holds for every $y\in Y$ and $\lambda \in \mathbb{R}$.
\end{definition}

Examples of Banach spaces failing the $(-1)$-$\BCP$ include $\ell_1(I)$, where $I$ is an uncountable set, the space $\ell_{\infty}/c_0$, and $X^*$ whenever $X$ has the Daugavet property \cite{ciaci2022characterization}.

The following result generalizes Example~\ref{newexample:Daguavet-SCD-empty}.

\begin{theorem}\label{thm: sufficient condition to have empty set of SCD points}
If $X$ is a Banach space such that $X^*$ fails $(-1)$-$\BCP$, then $\SCD(B_X)=\emptyset$.
\end{theorem}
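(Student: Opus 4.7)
By Corollary~\ref{remark: 0 being SCD}, it suffices to prove that $0$ is not an SCD point of $B_X$. I will argue by contradiction. Suppose $\{S_n\colon n\in\mathbb{N}\}$ is a determining sequence of slices for $0$, writing $S_n=S(B_X,x_n^*,\alpha_n)$ with $x_n^*\in S_{X^*}$ (normalizing without loss of generality) and $\alpha_n>0$. The plan is to apply the failure of the $(-1)$-$\BCP$ to the separable subspace $Y:=\overline{\spann}\{x_n^*\colon n\in\mathbb{N}\}$ of $X^*$, obtaining a functional $\varphi\in S_{X^*}$ satisfying $\|y+\lambda\varphi\|=\|y\|+|\lambda|$ for every $y\in Y$ and every $\lambda\in\mathbb{R}$. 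This $\varphi$ will serve as a Hahn--Banach-type separator contradicting the determining property.

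The crucial step is to locate a point $z_n\in S_n$ for each $n$ on which $\re\varphi$ is bounded away from zero. Fix a large parameter $\mu_n>0$; since $x_n^*\in Y$, the norm equation yields $\|x_n^*+\mu_n\varphi\|=1+\mu_n$. Therefore, for any $\delta_n>0$ there exists $z_n\in B_X$ with
\[
\re(x_n^*+\mu_n\varphi)(z_n)>1+\mu_n-\delta_n.
\]
Using $\re x_n^*(z_n)\leq 1$ and $\re\varphi(z_n)\leq 1$ separately, this splits into
\[
\re x_n^*(z_n)>1-\delta_n \quad\text{and}\quad \re\varphi(z_n)>1-\tfrac{\delta_n}{\mu_n}.
\]
Choosing $\delta_n<\alpha_n$ ensures $z_n\in S_n$, while choosing $\mu_n$ large enough (say $\mu_n>2\delta_n$) guarantees $\re\varphi(z_n)>\tfrac{1}{2}$. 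These two requirements are compatible, so such $z_n$ exist.

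Once the sequence $(z_n)$ is assembled, Proposition~\ref{proposition: equivalent conditions of being determining for point}(iii) forces $0\in\overline{\conv}\{z_n\colon n\in\mathbb{N}\}$. But $\re\varphi(z_n)\geq\tfrac12$ for every $n$, so $\re\varphi\geq\tfrac12$ on this closed convex hull, contradicting $\re\varphi(0)=0$.

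The main potential pitfall I anticipate is bookkeeping around scalars: Definition~\ref{proposition: characterization of failing -1-BCP} only supplies the norm identity for real $\lambda$, so one must take care to use only the real part of $\varphi$ throughout. Since the entire argument passes through the functional $\re\varphi$ and the real-linear slice functionals $\re x_n^*$, the computation proceeds identically whether $\mathbb{K}=\mathbb{R}$ or $\mathbb{K}=\mathbb{C}$, and no complex-scalar issue actually arises.
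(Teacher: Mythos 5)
Your proof is correct and follows essentially the same route as the paper's: both apply the failure of the $(-1)$-$\BCP$ in $X^*$ to (the separable span of) the slice functionals to obtain a single $\varphi\in S_{X^*}$ that is simultaneously almost normed together with each $x_n^*$, producing points $z_n\in S_n$ with $\re\varphi(z_n)>1/2$ and thereby separating $0$ from $\overline{\conv}(\{z_n\colon n\in\mathbb{N}\})$. The only cosmetic difference is that you attach a large weight $\mu_n$ to $\varphi$, whereas the paper takes $\lambda=1$ and instead tunes the tolerance to $\min\{\alpha_n,\tfrac12\}$.
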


\begin{proof}
\parskip=0ex
By Corollary~\ref{remark: 0 being SCD}, it suffices to show that $0\notin \SCD(B_X)$. Pick an arbitrary sequence of slices $\{S_n\colon n\in\mathbb{N}\}$, defined as $S_n = S(B_X,x^*_n,\alpha_n)$, where $x^*_n\in S_{X^*}$ and $\alpha_n>0$. We aim to show that there are $x_n\in S_n$ such that $0\not\in\overline{\conv}(\{x_n\colon n\in\mathbb{N}\})$. Since $X^*$ fails $(-1)$-BCP, we can find $x^*\in S_{X^*}$ such that
\[
\norm{x^*_n+x^*} = \norm{x^*_n} + 1 = 2
\]
for every $n\in\mathbb{N}$. By this condition, we can find for each $n\in\mathbb{N}$ an element $x_n\in S_{X}$ such that
\[
\real [x^*_n+x^*](x_n) > 2- \min\Big\{\alpha_n, \frac{1}{2}\Big\}.
\]
From this we infer that $x_n\in S_n$ for every $n\in\mathbb{N}$, because
\[
\real x^*_n(x_n)+1 \geq \real x_n^*(x_n) + \real x^*(x_n) > 2- \min\Big\{\alpha_n, \frac{1}{2}\Big\} \geq 2- \alpha_n.
\]
Similarly, we see that $\real x^*(x_n)> 1/2$ for every $n\in\mathbb{N}$,  hence $0\not\in\overline{\conv}(\{x_n\colon n\in\mathbb{N}\})$. This shows that $0\notin \SCD(B_X)$.
\end{proof}

Theorem~\ref{thm: sufficient condition to have empty set of SCD points} helps us to construct new (even separable) Banach spaces whose unit balls have no SCD points. Indeed, spaces whose duals fail $(-1)$-$\BCP$ include $C(K)$ spaces whenever $K$ is a compact Hausdorff space such that $|K|\geq \omega_1$ \cite[Theorem~6.3]{CIACI2022126185} and $L_1(\mu)$ spaces whenever $\mu$ is an atomless measure \cite[Corollary~6.7]{CIACI2022126185}. Let $X$ be one of such spaces and consider its direct sum $X\oplus_N X$ with the norm $N$ as suggested in \cite[Remark~6.8]{CIACI2022126185}. Then, the dual of $X\oplus_N X$ fails $(-1)$-$\BCP$ and so the unit ball of $X\oplus_N X$ has no SCD points by Theorem~\ref{thm: sufficient condition to have empty set of SCD points}; on the other hand, $X\oplus_N X$ fails the Daugavet property.

We end this section by studying SCD points in the unit ball of $L_1$-preduals. Recall that a Banach space $X$ is an $L_1$-predual if $X^*=L_1(S,\Sigma,\mu)$ for some measure space $(S,\Sigma,\mu)$. We need some notation. Given a Banach space $X$, we consider the equivalence relation $f\sim g$ if and only if $f$ and $g$ are linearly dependent elements of $\ext(B_{X*})$. We denote the quotient set by $\ext(B_{X*})/\sim$.

\begin{theorem}\label{thereom: L_1 preduals}
Let $X$ be an $L_1$-predual. Then, the following statements hold:
\begin{enumerate}
    \item [(a)] If $\ext(B_{X^*})/\sim$ is at most countable, then $\SCD(B_X) = B_X$;
    \item [(b)] If $\ext(B_{X^*})/\sim$ is uncountable, then $\SCD(B_X) = \emptyset$.
\end{enumerate}
\end{theorem}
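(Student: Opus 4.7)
Both statements hinge on the structural description $X^* \cong \ell_1(\Gamma)$ with $|\Gamma|=|\ext(B_{X^*})/{\sim}|$. Indeed, since $X$ is an $L_1$-predual, $X^*$ is lattice-isometric to $L_1(\mu)$ for some $\mu$, and the equivalence classes of extreme points of $B_{X^*}$ are in bijection with the atoms of $\mu$. If $\mu$ had a non-trivial atomless part, the corresponding band in $X^*$ would be a complemented subspace containing a (separable) sub-band isometric to $L_1[0,1]$; since complemented subspaces of duals are again duals and $L_1[0,1]$ is classically known not to be isomorphic to any dual space, this is impossible. Hence $\mu$ is purely atomic and $X^* \cong \ell_1(\Gamma)$ isometrically.

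For (b), assume $|\Gamma|$ is uncountable. The plan is to verify that $X^*$ fails $(-1)$-$\BCP$ and then apply Theorem \ref{thm: sufficient condition to have empty set of SCD points}. Given any separable subspace $Z \subseteq \ell_1(\Gamma)$, pick a countable dense subset; each of its elements is supported on countably many coordinates, so the full support of $Z$ lies in some countable $\Gamma_0 \subseteq \Gamma$. Choosing $\gamma \in \Gamma \setminus \Gamma_0$, the unit vector $e_\gamma \in S_{X^*}$ is disjointly supported (in the $\ell_1$-lattice) from every element of $Z$, giving $\|z+\lambda e_\gamma\|=\|z\|+|\lambda|$ for every $z\in Z$ and $\lambda \in \R$, which is exactly the failure of $(-1)$-$\BCP$.

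For (a), assume $|\Gamma|$ is countable. Then $X^* = \ell_1(\Gamma)$ is separable, so $X$ is Asplund and $(B_{X^*},w^*)$ is compact metrizable; the canonical isometric embedding $X \hookrightarrow C(B_{X^*},w^*)$ thus lands in a separable space, making $X$ itself separable. By Example \ref{example:examples of SCD sets}(b), $B_X$ is an SCD set, and Lemma \ref{lemma: connection between SCD sets and SCD points}(1) gives $\SCD(B_X)=B_X$. The main obstacle is the initial reduction $X^* \cong \ell_1(\Gamma)$, which relies on the classical (non-trivial) fact that $L_1[0,1]$ is not isomorphic to any dual Banach space; once this is in hand, both (a) and (b) are direct assemblies of material already in the excerpt.
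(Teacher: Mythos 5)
The reduction $X^*\cong\ell_1(\Gamma)$ is false, and the lemma you invoke to obtain it --- ``complemented subspaces of duals are again duals'' --- is a classical non-theorem. The standard counterexample is precisely the object you are trying to exclude: $L_1[0,1]$ is the band of absolutely continuous measures inside $M[0,1]=C[0,1]^*$, and the Lebesgue decomposition $\mu\mapsto\mu_{ac}$ is a norm-one projection onto it, so $L_1[0,1]$ is $1$-complemented in a dual space while (as you correctly note) not being isomorphic to any dual space. Consequently the dual of an $L_1$-predual can perfectly well have a non-trivial atomless part: $C[0,1]$ is an $L_1$-predual and $C[0,1]^*=M[0,1]$ is not $\ell_1(\Gamma)$ for any $\Gamma$. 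This leaves a real gap in your part (a): without the purely atomic reduction you have no control over the atomless summand, and the implication ``$\ext(B_{X^*})/\sim$ at most countable $\Rightarrow$ $X^*$ separable'' that you would need is a genuinely non-trivial theorem of Fonf and of Lindenstrauss--Phelps, which is exactly what the paper cites at this point; nothing in your argument substitutes for it.

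Part (b) is in better shape: the disjoint-support argument is sound and is essentially why $\ell_1(I)$ fails the $(-1)$-$\BCP$ for uncountable $I$, but as written it is run inside the nonexistent $\ell_1(\Gamma)$. To repair it, keep the decomposition $X^*=L_1(\nu)\oplus_1\ell_1(I)$ with $\nu$ atomless (as the paper does), use your correct observation that the equivalence classes of extreme points of $B_{L_1(\mu)}$ biject with the atoms to conclude that $I$ is uncountable, and then run the support argument in $L_1(\mu)$ itself: a separable subspace is supported on a $\sigma$-finite piece which meets only countably many atoms, so some normalized atom $e_\gamma$ is disjoint from it and $\|z+\lambda e_\gamma\|=\|z\|+|\lambda|$ follows from the $L_1$-lattice structure; Theorem~\ref{thm: sufficient condition to have empty set of SCD points} then finishes (b). This is in substance the paper's route, except that the paper cites the known $(-1)$-$\BCP$ results for $\ell_1(I)$ and for $\oplus_1$-sums rather than reproving them.
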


To give the proof, we need to recall the well known decomposition of every $L_1(\mu)$-space in the form
\begin{equation}\label{equation: decomposition of L_1(mu)}
L_1(\mu) = L_1(\nu)\oplus_1 \ell_1(I),
\end{equation}
where $\nu$ is an atomless measure and $I$ is some index set (see \cite[Theorem~2.1]{measureJohnson}, for instance).

\begin{proof}
\parskip=0ex
    $(a)$. If $\ext(B_{X^*})/\sim$ is countable, then it is known that $X^*$ is separable \cite[Theorem~2]{Fonf-1978}, \cite[Theorem~3.1]{Fonf-Phelps1968}.
    Thus, $X$ is Asplund and separable, hence $X$ is an SCD space by \cite[Example~3.2]{aviles_slicely_2010}, in particular, $\SCD(B_X) = B_X$.

    $(b)$. Assume that $\ext(B_{X^*})/\sim$ is uncountable. Observe that the only extreme points of $B_{L_1(\mu)}$ are those in the second factor of the decomposition (\ref{equation: decomposition of L_1(mu)}) (use \cite[Lemma~I.1.5]{HWW} and the fact that the unit ball of $L_1(\nu)$ has no extreme points). Besides, $\ext(B_{\ell_1(I)}) = \{\lambda e_i\colon i\in I, |\lambda|=1\}$, where $e_i(j)=\delta_{ij}$. We deduce that $I$ is uncountable. It is known that in this case, $\ell_1(I)$ fails $(-1)$-BCP \cite[Corollary~25]{BCPInitial} and so does the absolute sum $L_1(\nu)\oplus_1 \ell_1(I)$ by \cite[Proposition~8]{BCPInitial}. Theorem~\ref{thm: sufficient condition to have empty set of SCD points} then shows that $\SCD(B_X)=\emptyset$.
\end{proof}

A consequence of the above result is the following interesting example.

\begin{example}
{\slshape Let $I$ be an uncountable set. Then, $\SCD(B_{c_0(I)})=\emptyset$.\ }
\end{example}

It is interesting here that $c_0(I)$ is Asplund (hence, in particular, it does not contain copies of $\ell_1$), hence Example~\ref{example:examples of SCD sets}.(b) does not extend to the non-separable case. Compare with the case of item (a) of the same Example, which extends to the non-separable case, see Example~\ref{example:stronglyregular-set}. We do not know which could be the version of the SCD property in the non-separable case which covers Asplund spaces.

\section{SCD points in absolute sums}\label{sec: SCD points in direct sums}
Our aim here is to present various stability results for SCD points by absolute sums and also to construct some interesting examples using this techniques.

\subsection{SCD points in the sum $X \oplus_\infty Y$}

Recall that for Banach spaces $X$ and $Y$ one has that $B_{X\oplus_{\infty} Y} = B_X\times B_Y$. Thus the following two preliminary results are straightforward adaptations of \cite[Lemmata~2.3~and~2.4]{kadets_operations_2018} where sums and unions of SCD sets were studied.

\begin{lemma}[cf.\ {\cite[Lemma~2.3]{kadets_operations_2018}}]\label{lemma: slice inclusion in l infinity sum}
Let $X$ and $Y$ be Banach spaces. For any $(x^*,y^*)\in (X\oplus_{\infty} Y)^*$ and $ \alpha, \beta, \gamma>0$, the following conditions hold:
\begin{enumerate}
    \item[(a)] $S(B_X, x^*, \alpha)\times S(B_Y, y^*,\beta) \subseteq S(B_{X \oplus_\infty Y}, (x^*,y^*),\alpha + \beta)$.
    \item[(b)] If $a\in B_X$, $b\in B_Y$ satisfy $(a,b)\in S(B_{X \oplus_\infty Y}, (x^*,y^*), \gamma)$, then
    $a\in S(B_X, x^*,\gamma)$ and $b\in S(B_Y, y^*,\gamma)$.
\end{enumerate}
\end{lemma}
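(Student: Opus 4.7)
My plan is to reduce everything to the formulas $\sup_{x\in B_X}\re x^*(x)=\|x^*\|$, $\sup_{y\in B_Y}\re y^*(y)=\|y^*\|$, together with the identification of the dual of an $\ell_\infty$-sum as an $\ell_1$-sum, which gives
\[
\sup_{(x,y)\in B_{X\oplus_\infty Y}}\re\bigl(x^*(x)+y^*(y)\bigr)=\|x^*\|+\|y^*\|.
\]
Writing $M:=\|x^*\|$, $N:=\|y^*\|$, the three slices appearing in the statement are then, up to the standard identification,
\[
S(B_X,x^*,\alpha)=\{x\in B_X\colon \re x^*(x)>M-\alpha\},
\]
\[
S(B_Y,y^*,\beta)=\{y\in B_Y\colon \re y^*(y)>N-\beta\},
\]
\[
S(B_{X\oplus_\infty Y},(x^*,y^*),\gamma)=\bigl\{(x,y)\in B_X\times B_Y\colon \re\bigl(x^*(x)+y^*(y)\bigr)>M+N-\gamma\bigr\}.
\]

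For part (a), I would simply take $(a,b)$ in the product $S(B_X,x^*,\alpha)\times S(B_Y,y^*,\beta)$ and add the two defining strict inequalities $\re x^*(a)>M-\alpha$ and $\re y^*(b)>N-\beta$ to obtain $\re(x^*(a)+y^*(b))>M+N-(\alpha+\beta)$, which is exactly the condition for $(a,b)$ to lie in $S(B_{X\oplus_\infty Y},(x^*,y^*),\alpha+\beta)$.

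For part (b), starting from $\re(x^*(a)+y^*(b))>M+N-\gamma$ with $(a,b)\in B_X\times B_Y$, I would use the coordinatewise upper bounds $\re y^*(b)\leq N$ and $\re x^*(a)\leq M$ separately: the first gives $\re x^*(a)>M-\gamma$, and the second gives $\re y^*(b)>N-\gamma$. This places $a$ in $S(B_X,x^*,\gamma)$ and $b$ in $S(B_Y,y^*,\gamma)$, as required.

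There is no substantial obstacle here: the only thing one should be slightly careful about is the degenerate situation $x^*=0$ or $y^*=0$, which must be ruled out because slices are by definition non-empty and $\alpha,\beta,\gamma>0$ allows $x^*=0$ to produce the whole ball. But in that case the formulas still work with $M=0$ (resp.\ $N=0$), so no separate argument is needed. The entire lemma is really just the observation that the dual pairing on $X\oplus_\infty Y$ splits additively on coordinates and so do the suprema.
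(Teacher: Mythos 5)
Your argument is correct and is essentially identical to the paper's proof: part (a) by adding the two strict inequalities and using $\|(x^*,y^*)\|_1=\|x^*\|+\|y^*\|$, and part (b) by bounding one coordinate's contribution by its norm to isolate the other. The aside about $x^*=0$ is harmless and not needed.
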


\begin{proof}
\parskip=0ex
    $(a)$. Pick $(x,y)\in S(B_X, x^*, \alpha)\times S(B_Y, y^*,\beta)$. Then
    \begin{align*}
        &\real x^*(x)>\norm{x^*}-\alpha, \; \real y^*(y)>\norm{y^*}-\beta \\
        &\implies \real (x^*,y^*)(x,y) > \norm{x^*}+ \norm{y^*} - (\alpha + \beta)\\
        &\iff \real (x^*,y^*)(x,y) > \norm{(x^*,y^*)}_1 -(\alpha + \beta),
    \end{align*}
    hence $(x,y)\in S(B_{X\oplus_{\infty} Y}, (x^*,y^*),\alpha + \beta)$.

    $(b)$. Let $a\in B_X$, $b\in B_Y$ be such that $(a,b)\in S(B_Z,(x^*,y^*),\gamma)$. This means that
    \begin{align*}
        \real x^*(a)+\norm{y^*} \geq \real x^*(a) + \real y^*(b) > \norm{(x^*,y^*)}_1-\gamma = \norm{x^*} + \norm{y^*} - \gamma,
    \end{align*}
    from which we can conclude that $\real x^*(a)> \norm{x^*}-\gamma$, therefore $a\in S(B_X,x^*,\gamma)$. The proof for $b\in S(B_Y, y^*,\gamma)$ is analogous.
\end{proof}

The next result is an adaptation of \cite[Lemma~2.4]{kadets_operations_2018} to the new pointwise setting. We include the proof for the sake of completeness.

\begin{lemma}\label{lemma: necessary condition for pair to be SCD-point}
Let $X$ and $Y$ be Banach spaces. Assume that $(a,b)\in \SCD(B_{X\oplus_{\infty} Y})$. Then, there exists a  sequence $((x^*_n,y^*_n),\alpha_n)\in (X\oplus_{\infty} Y)^*\times (0,\infty)$ such that for every $(x^*,y^*)\in (X\oplus_{\infty} Y)^*$ and $\alpha>0$ satisfying $(a,b)\in S(B_{X\oplus_{\infty} Y}, (x^*,y^*),\alpha)$, there exists $m\in\mathbb{N}$ such that
\begin{equation}\label{lemma necessary condition inclusions}
S(B_X, x^*_m, \alpha_m)\subseteq S(B_X, x^*,\alpha) \;\text{ and }\;  S(B_Y, y^*_m, \alpha_m)\subseteq S(B_Y, y^*,\alpha).
\end{equation}
\end{lemma}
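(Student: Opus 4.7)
\medskip

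\noindent\textbf{Proof plan.} The plan is to take the parameters of a determining sequence of slices for $(a,b)$ as the sequence we want, and then show that the single index $m$ produced by the determining property (applied to the given slice of $B_{X\oplus_\infty Y}$) already witnesses both inclusions in the factors. The key point is that the dual of $\oplus_\infty$ is $\oplus_1$, together with the trivial estimate $\re y^*(v)\leq\|y^*\|$ for $v\in B_Y$.

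First I would use that $(a,b)\in\SCD(B_{X\oplus_\infty Y})$ to pick a determining sequence of slices $\{T_n\colon n\in\mathbb{N}\}$ for $(a,b)$, and write each as $T_n=S(B_{X\oplus_\infty Y},(x^*_n,y^*_n),\alpha_n)$. This $((x^*_n,y^*_n),\alpha_n)$ is the candidate sequence. Now fix $(x^*,y^*)$ and $\alpha>0$ with $(a,b)\in S:=S(B_{X\oplus_\infty Y},(x^*,y^*),\alpha)$; by Proposition~\ref{proposition: equivalent conditions of being determining for point}(ii) there exists $m\in\mathbb{N}$ with $T_m\subseteq S$.

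Next I claim the same $m$ yields $S(B_X,x^*_m,\alpha_m)\subseteq S(B_X,x^*,\alpha)$. Given $u\in S(B_X,x^*_m,\alpha_m)$, set $\eta:=\re x^*_m(u)-\|x^*_m\|+\alpha_m>0$ and pick $v\in B_Y$ with $\re y^*_m(v)>\|y^*_m\|-\eta$ (possible because $\|y^*_m\|=\sup_{B_Y}\re y^*_m$). Then
\[
\re x^*_m(u)+\re y^*_m(v)>\|x^*_m\|+\|y^*_m\|-\alpha_m=\|(x^*_m,y^*_m)\|_1-\alpha_m,
\]
so $(u,v)\in T_m\subseteq S$, giving $\re x^*(u)+\re y^*(v)>\|x^*\|+\|y^*\|-\alpha$; since $\re y^*(v)\leq\|y^*\|$, this forces $\re x^*(u)>\|x^*\|-\alpha$, i.e.\ $u\in S(B_X,x^*,\alpha)$. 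The inclusion $S(B_Y,y^*_m,\alpha_m)\subseteq S(B_Y,y^*,\alpha)$ follows by a symmetric argument, swapping the roles of the two coordinates and using a near-norming element in $B_X$ for $x^*_m$.

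I do not anticipate any serious obstacle: conceptually, the statement is the pointwise analogue of \cite[Lemma~2.4]{kadets_operations_2018} and in the pointwise setting we only need the determining property at $(a,b)$, so there is no need to uniformize over all slices of the ball — the same $m$ can serve both factors because the $\oplus_1$-norm on the dual splits additively and the strict inequality in the definition of a slice absorbs the slack coming from the trivial bound $\re y^*(v)\leq\|y^*\|$.
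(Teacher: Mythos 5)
Your proof is correct and follows essentially the same route as the paper: take the parameters of a determining sequence of slices for $(a,b)$, obtain a single $m$ from the determining property applied to the given slice, and push the inclusion down to each factor using the trivial bound $\re y^*(v)\leq\|y^*\|$ (which is exactly the paper's Lemma~\ref{lemma: slice inclusion in l infinity sum}.(b)). The only difference is bookkeeping: the paper writes the determining slices with widths $2\alpha_n$ and outputs the half-widths $\alpha_n$ so that the product of factor slices fits inside the sum slice via Lemma~\ref{lemma: slice inclusion in l infinity sum}.(a), whereas you keep the full widths by pairing each $u$ with a companion $v$ that nearly norms $y^*_m$ within the slack $\eta$ --- a slightly sharper but equivalent argument.
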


\begin{proof}
\parskip=0ex
    Let $S_n = S(B_{X\oplus_{\infty} Y}, (x^*_n, y^*_n), 2\alpha_n)$ be the determining sequence of slices for $(a,b)\in B_{X\oplus_{\infty} Y}$. We show that the desired sequence is $((x^*_n,y^*_n), \alpha_n)$, where $n\in\mathbb{N}$. Pick $(x^*,y^*)\in (X\oplus_{\infty} Y)^*$, $\alpha>0$ so that $(a,b)\in S(B_{X\oplus_{\infty} Y}, (x^*,y^*),\alpha)$. Since the sequence $\{S_n\colon n\in\mathbb{N}\}$ is determining for $(a,b)$, we can find $m\in\mathbb{N}$ such that
    \[
    S(B_{X\oplus_{\infty} Y}, (x^*_m,y^*_m), 2\alpha_m) \subseteq S(B_{X\oplus_{\infty} Y}, (x^*,y^*),\alpha).
    \]
    An application of Lemma \ref{lemma: slice inclusion in l infinity sum}.(a) gives us
    \[
    S(B_X, x^*_m, \alpha_m)\times S(B_Y, y^*_m,\alpha_m) \subseteq S(B_{X\oplus_{\infty} Y}, (x^*,y^*),\alpha).
    \]
    Observe that inclusions (\ref{lemma necessary condition inclusions}) are satisfied. Indeed, making use of Lemma \ref{lemma: slice inclusion in l infinity sum}.(b):
    \begin{align*}
        (x,y)\in S(B_X, x^*_m, \alpha_m)\times S(B_Y, y^*_m, \alpha_m) &\implies (x,y)\in S(B_{X\oplus_{\infty} Y}, (x^*,y^*),\alpha)\\
        &\implies x\in S(B_X,x^*,\alpha), \; y\in S(B_Y,y^*,\alpha),
    \end{align*}
    which implies that $ (x,y) \in S(B_X,x^*,\alpha)\times S(B_Y, y^*,\alpha)$.
\end{proof}

With the use of the presented lemmata, we can completely characterize the SCD points of the unit ball of an $\ell_\infty$ sum of two spaces.

\begin{proposition}\label{proposition: infty sum equivalent condition}
Let $X$ and $Y$ be Banach spaces. An element $(a,b)\in \SCD(B_{X\oplus_{\infty} Y})$ if and only if $a\in\SCD(B_X)$ and $b\in\SCD(B_Y)$.
\end{proposition}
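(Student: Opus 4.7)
The plan is to prove the two implications separately, using Lemma~\ref{lemma: necessary condition for pair to be SCD-point} for the forward direction and a direct construction using product weakly open sets for the reverse one.

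For $(\Rightarrow)$, assume $(a,b)\in \SCD(B_{X\oplus_{\infty} Y})$ and apply Lemma~\ref{lemma: necessary condition for pair to be SCD-point} to obtain a sequence $((x_n^*,y_n^*),\alpha_n)$ with the stated inclusion property. I claim that the subfamily $\{S(B_X, x_n^*, \alpha_n)\colon n\in \N,\; x_n^*\neq 0\}$ is a countable determining family of slices of $B_X$ for $a$; by symmetry the analogous family in $B_Y$ works for $b$, so Proposition~\ref{proposition: equivalent conditions of being determining for point} yields $a\in \SCD(B_X)$ and $b\in \SCD(B_Y)$. To verify the claim, pick any slice $S(B_X, x^*,\alpha)$ of $B_X$ containing $a$ (with $x^*\neq 0$) and observe that $(a,b)\in S(B_{X\oplus_{\infty} Y}, (x^*,0),\alpha)=S(B_X, x^*,\alpha)\times B_Y$. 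Lemma~\ref{lemma: necessary condition for pair to be SCD-point} supplies $m\in \N$ with $S(B_X, x_m^*, \alpha_m)\subseteq S(B_X, x^*,\alpha)$; if $x_m^*$ were the zero functional, the left-hand side would collapse to the whole of $B_X$ and the inclusion would force $x^*=0$, contradicting that $S(B_X, x^*,\alpha)$ is a proper slice. Hence $m$ belongs to the subfamily.

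For $(\Leftarrow)$, fix determining sequences of slices $\{S_m^X\colon m\in \N\}$ for $a$ and $\{S_n^Y\colon n\in \N\}$ for $b$, and consider the countable family $\{S_m^X\times S_n^Y\colon m,n\in \N\}\subseteq B_{X\oplus_{\infty} Y}$. Since the coordinate projections are weak-to-weak continuous, each product is relatively weakly open in the unit ball of the $\ell_\infty$-sum, so by Proposition~\ref{proposition: S_n iff W_n iff C_n} it suffices to show the family is determining for $(a,b)$. Let $B\subseteq B_{X\oplus_{\infty} Y}$ meet every $S_m^X\times S_n^Y$ and pick witnesses $(x_{m,n},y_{m,n})\in B\cap(S_m^X\times S_n^Y)$. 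The proof now runs in two stages. First, fix $m\in \N$ and $\varepsilon>0$: the set $\{y_{m,n}\colon n\in \N\}$ meets every $S_n^Y$, hence (by $b\in \SCD(B_Y)$) there is a finite convex combination $\sum_n \lambda_n^m y_{m,n}$ within $\varepsilon$ of $b$; lifting it, the point $(\widetilde{x}_m^\varepsilon,\widetilde{y}_m^\varepsilon):=\sum_n \lambda_n^m (x_{m,n},y_{m,n})\in \conv(B)$ satisfies $\widetilde{x}_m^\varepsilon\in S_m^X$ (by convexity of $S_m^X$) and $\|\widetilde{y}_m^\varepsilon-b\|<\varepsilon$. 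Second, $\{\widetilde{x}_m^\varepsilon\colon m\in \N\}$ meets every $S_m^X$, so a further finite convex combination $\sum_m \mu_m \widetilde{x}_m^\varepsilon$ is within $\varepsilon$ of $a$; the corresponding combination $\sum_m \mu_m (\widetilde{x}_m^\varepsilon,\widetilde{y}_m^\varepsilon)\in \conv(B)$ has $X$-coordinate $\varepsilon$-close to $a$ and $Y$-coordinate still $\varepsilon$-close to $b$, the latter because it is a convex combination of vectors each $\varepsilon$-close to $b$. Letting $\varepsilon\to 0$ gives $(a,b)\in \overline{\conv}(B)$, as desired.

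The step I expect to require the most thought is the reverse direction. The naive candidate family of slices $\{S(B_X, x^*,\alpha)\times B_Y, B_X\times S(B_Y, y^*,\alpha)\}$ fails (a two-point set $\{(a,b'),(a',b)\}$ can meet all of them without $(a,b)$ lying in its closed convex hull), which forces one to use the finer product family and a two-stage averaging: first average in $n$ to approximate $b$ while preserving membership in $S_m^X$, and only then average in $m$ to approximate $a$. The forward direction is essentially immediate from Lemma~\ref{lemma: necessary condition for pair to be SCD-point}, up to the minor care of discarding indices with $x_n^*=0$.
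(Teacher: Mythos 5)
Your proof is correct. The necessity half is the same as the paper's: both apply Lemma~\ref{lemma: necessary condition for pair to be SCD-point} to the functional $(x^*,0)$ and read off a determining sequence of slices for $a$ (your extra care in discarding indices with $x^*_n=0$ is harmless; the paper does not bother, since such an inclusion is vacuously compatible with condition (ii) of Proposition~\ref{proposition: equivalent conditions of being determining for point}). In the sufficiency half you choose the same candidate family $\{S_m^X\times S_n^Y\}$ of relatively weakly open subsets of $B_{X\oplus_\infty Y}=B_X\times B_Y$, but you verify that it is determining differently. The paper works on the dual side: given a slice $S=S(B_{X\oplus_\infty Y},(x^*,y^*),\alpha)$ containing $(a,b)$, it manufactures slices $S_1\ni a$ and $S_2\ni b$ with $S_1\times S_2\subseteq S$ via the identity $\|(x^*,y^*)\|_1=\|x^*\|+\|y^*\|$ and Lemma~\ref{lemma: slice inclusion in l infinity sum}(a), then invokes condition (ii) of Proposition~\ref{proposition: equivalent conditions of being determining for point}. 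You instead verify the definition directly on the primal side by a two-stage convex averaging: first in $n$ (for fixed $m$) to pull the second coordinate toward $b$ while convexity keeps the first coordinate inside $S_m^X$, then in $m$ to pull the first coordinate toward $a$ without losing the second. Both arguments are sound; the paper's is slightly shorter because the slice-inclusion criterion does the averaging implicitly, while yours avoids any computation with the dual norm and makes explicit why the coarser family $\{S\times B_Y,\ B_X\times S'\}$ cannot work — a point the paper leaves unremarked.
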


\begin{proof}
\parskip=0ex
\textit{Necessity}. Assume that $(a,b)\in \SCD(B_{X\oplus_{\infty} Y})$
and let
\[((x^*_n,y^*_n), \alpha_n)\in (X\oplus_{\infty} Y)^*\times (0,\infty)\]
be the sequence from Lemma \ref{lemma: necessary condition for pair to be SCD-point}. Let us show that slices $S_n = S(B_X,x^*_n, \alpha_n)$ are determining for $a$. Fix a slice $S(B_X, x^*,\alpha)\ni a$. Observe that $(x^*,0)\in (X\oplus_{\infty} Y)^*$ and $(a,b)\in S(B_{X\oplus_{\infty} Y}, (x^*,0), \alpha)$, hence, by Lemma \ref{lemma: necessary condition for pair to be SCD-point}, there exists $m\in\mathbb{N}$ such that
\[
S_m = S(B_X,x^*_m,\alpha_m)\subseteq S(B_X, x^*,\alpha),
\]
so $a\in\SCD(B_X)$. Analogously we deduce that $b\in \SCD(B_Y)$.

\textit{Sufficiency}. Assume that sequences $\{S_n^a \colon n\in\mathbb{N}\}$ and $\{S_n^b \colon n\in\mathbb{N}\}$ determine points $a$ and $b$ respectively. We aim to prove, by using Proposition \ref{proposition: S_n iff W_n iff C_n} Condition (ii), that the sequence of non-empty relatively weakly open subsets $\{S^a_n\times S^b_k\colon n,k\in\mathbb{N}\}$ determines point $(a,b)$. Fix a slice $S=S(B_{X\oplus_{\infty} Y}, (x^*,y^*), \alpha)$ containing $(a,b)$, where $\norm{(x^*,y^*)}_1 = 1$ and $\alpha>0$. See that in this case
 \[
 \alpha > 1-\real x^*(a) - \real  y^*(b),
 \]
 which enables us to find $\gamma>0$ satisfying the inequality
 \[
 \alpha > 1-\real x^*(a) - \real  y^*(b) + \gamma.
 \]
 Notice that
 \[
\real x^*(a) > \norm{x^*} - \Big(\norm{x^*}-\real x^*(a) + \frac{\gamma}{2}\Big), \quad \real y^*(b) > \norm{y^*} - \Big(\norm{y^*}-\real y^*(b) + \frac{\gamma}{2}\Big).
\]
From this we infer that
\[
a\in S_1:=S\Big(B_X,x^*,\norm{x^*}-\real x^*(a) + \frac{\gamma}{2}\Big)\]
and
\[ b\in S_2:=S\Big(B_Y,y^*,\norm{y^*}-\real y^*(b) - \frac{\gamma}{2}\Big),
\]
hence it is possible to find $i,j\in \mathbb{N}$ such that $S^a_i\subseteq S_1$ and $S^b_j \subseteq S_2$ (as $a\in \SCD(B_X)$ and $b\in \SCD(B_Y)$). Now, making use of Lemma \ref{lemma necessary condition inclusions}.(a), we get
$S_1 \times S_2 \subseteq S$, because
\begin{align*}
\Big(\norm{x^*}-\real x^*(a) +\frac{\gamma}{2}\Big) &+ \Big(\norm{y^*}-\real y^*(b) +\frac{\gamma}{2}\Big) \\
&= \norm{(x^*,y^*)}_1 - \real x^*(a)-\real y^*(b) + \gamma \\
&= 1 -\real (x^*,y^*)(a,b) +\gamma < \alpha.
\end{align*}
In conclusion, we have $S^a_i\times S^b_j \subseteq S_1\times S_2\subseteq S$, which means that the countable collection of non-empty relatively weakly open subsets $\{S_n^a\times S_k^b\colon n,k\in\mathbb{N}\}$ determines point $(a,b)$.
\end{proof}

\subsection{SCD points in the sum $X \oplus_p Y$, where $1\leq p <\infty$.}
Our first result here studied some SCD points of the unit ball of an $\ell_p$-sum of two spaces for $1\leq p<\infty$.

\begin{proposition}\label{prop: SCD points in finite p-sum}
    Let $X$ and $Y$ be Banach spaces and $1\leq p<\infty$.
    \begin{enumerate}
        \item[(a)] If $a\in \SCD(B_X)$, then $(a,0)\in \SCD(B_{X\oplus_p Y})$.
        \item[(b)] If $a\in S_X$ and $(a,0)\in \SCD(B_{X\oplus_p Y})$, then $\lambda a\in \SCD(B_X)$ for every $\lambda\in[-1,1]$.
    \end{enumerate}
\end{proposition}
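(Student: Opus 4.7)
My plan is to prove part (b) first via a direct projection argument, and then to establish part (a) by a more delicate lifting argument involving a doubly-indexed family of slices.

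For (b), the starting point is a determining sequence $\{T_n = S(B_{X\oplus_p Y}, (x_n^*, y_n^*), \alpha_n)\}$ for $(a, 0)$. The hypothesis $\|a\|=1$ plays a crucial role: since $\re x_n^*(a)\leq \|x_n^*\|$, the inclusion $(a, 0)\in T_n$ yields $\beta_n := \alpha_n - (\|(x_n^*, y_n^*)\|_{(X\oplus_p Y)^*} - \|x_n^*\|) > 0$, so $S_n := S(B_X, x_n^*, \beta_n)$ is a well-defined slice of $B_X$ containing $a$, with the useful property that $x\in S_n$ implies $(x, 0)\in T_n$. For any selection $x_n\in S_n$, the determining property of $\{T_n\}$ applied to the lifted points $(x_n, 0)$ gives $(a, 0)\in \overline{\conv}\{(x_n, 0)\colon n\in\mathbb{N}\}$, and projecting onto the first coordinate yields $a\in \overline{\conv}\{x_n\colon n\in\mathbb{N}\}$. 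By Proposition~\ref{proposition: equivalent conditions of being determining for point}(iii), $\{S_n\}$ is determining for $a$, so $a\in \SCD(B_X)$. The extension to $\lambda a$ for all $\lambda\in [-1, 1]$ then follows from Lemma~\ref{lemma: properties of SCD(A)}: $\SCD(B_X)$ is convex and, as $B_X$ is balanced, also balanced, so it contains $0$, $a$, and $-a$, and hence the entire segment $[-a, a]$ by convexity.

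For (a), I would start with a determining sequence $\{S_n = S(B_X, x_n^*, \alpha_n)\}$ for $a$, and first reduce to the case $\|x_n^*\|=1$ and $\alpha_n\leq 1$, the latter by shrinking each slice and invoking Remark~\ref{newremark:smaller sets are also determining}. Then I would consider the countable doubly-indexed family
\[
T_{n,k} := S(B_{X\oplus_p Y}, (x_n^*, 0), \alpha_n/k), \quad n, k\in \mathbb{N},
\]
and verify Proposition~\ref{proposition: equivalent conditions of being determining for point}(iii). Given any selection $(x_{n,k}, y_{n,k})\in T_{n,k}$, two facts drive the argument. First, since $\alpha_n/k\leq \alpha_n$, we have $x_{n,k}\in S_n$ for every $n, k$; for each fixed $k$, the determining property of $\{S_n\}$ thus gives $a\in \overline{\conv}\{x_{n,k}\colon n\in\mathbb{N}\}$. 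Second, the slice condition $\re x_n^*(x_{n,k})>1-\alpha_n/k$ combined with $\|x_{n,k}\|^p + \|y_{n,k}\|^p\leq 1$ forces $\|y_{n,k}\|<(1-(1-\alpha_n/k)^p)^{1/p}\leq (1-(1-1/k)^p)^{1/p}$, a bound that tends to $0$ as $k\to\infty$ uniformly in $n$. For each $k$, I would choose a finite convex combination $\sum_{n\in F_k}\lambda_{n,k}x_{n,k}$ within $1/k$ of $a$; the combination $\sum_{n\in F_k}\lambda_{n,k}(x_{n,k}, y_{n,k})$ then approximates $(a, 0)$ in $X\oplus_p Y$, since its second coordinate has norm at most $(1-(1-1/k)^p)^{1/p}\to 0$.

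The main obstacle that dictates the doubly-indexed trick is that the naive family $T_n := S(B_{X\oplus_p Y}, (x_n^*, 0), \alpha_n)$ does \emph{not} work: although the $y$-coordinate of each element of $T_n$ is controlled by a function of $\alpha_n$, a finite convex combination of $x_n$'s approximating $a$ may concentrate its mass on indices $n$ for which $\alpha_n$ is large, so the corresponding $y$-sum need not be small. The second index $k$, together with the uniform cap $\alpha_n\leq 1$, forces $\alpha_n/k$ to be simultaneously small for every $n$ in any fixed finite support set $F_k$ once $k$ is taken large enough; this decouples the support of the approximating convex combination from the size of the $y$-coordinates and delivers the claimed membership $(a,0)\in \SCD(B_{X\oplus_p Y})$.
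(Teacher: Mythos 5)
Your proof of part (a) is essentially the paper's own argument: the same doubly indexed slices $S\bigl(B_{X\oplus_p Y},(x^*_n,0),\alpha_n/k\bigr)$, the same uniform bound on the second coordinates coming from $\|x_{n,k}\|^p+\|y_{n,k}\|^p\leq 1$, and the same choice of a single large $k$ to decouple the support of the approximating convex combination from the sizes of the $\alpha_n$. That part is correct.

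Part (b), however, has a genuine gap at the very first step. You take an arbitrary determining sequence $\{T_n\}$ for $(a,0)$ and then assert that ``the inclusion $(a,0)\in T_n$ yields $\beta_n>0$''. Nothing guarantees that $(a,0)$ belongs to any of the slices $T_n$: a determining sequence for a point need not contain that point, and the paper makes this explicit (the example of a Daugavet point in a separable RNP space, where every member of a determining sequence can be taken at distance greater than $2-\varepsilon$ from the determined point). If some $T_n=S\bigl(B_{X\oplus_p Y},(x^*_n,y^*_n),\alpha_n\bigr)$ misses $(a,0)$, then $\beta_n$ may well be $\leq 0$; in that case $S(B_X,x^*_n,\beta_n)$ is not a slice, and the lifting property $x\in S_n\Rightarrow (x,0)\in T_n$ --- which amounts to $\re x^*_n(x)>\|(x^*_n,y^*_n)\|-\alpha_n$ --- cannot be achieved by any $x\in B_X$, since the left-hand side is at most $\|x^*_n\|$.

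This is not a cosmetic issue. The inequality $\re x^*_n(a)\leq\|x^*_n\|$ that you invoke holds for every $a\in B_X$, not only for $a\in S_X$, so your argument never actually uses the hypothesis $\|a\|=1$; were it valid, it would prove (b) for all $a\in B_X$. But the paper's remark after Proposition~\ref{proposition: 0 is always SCD in infinite absolute sum} exhibits $Z=C[0,1]\oplus_2 Y$ with $(0,0)\in\SCD(B_Z)$ while $0\notin\SCD(B_{C[0,1]})$, so statement (b) is false for $\|a\|<1$ and any correct proof must use $\|a\|=1$ essentially. In the paper's proof this happens elsewhere: one feeds into the determining sequence points $z_n$ of the special form $(x_n,0)$ or $(0,y_n)$ according to whether $x^*_n\neq 0$, extracts a convex combination $\sum_n\lambda_n z_n$ close to $(a,0)$, and only then uses $\|a\|=1$ via the reverse triangle inequality to force $\sum_{n\in I}\lambda_n$ to be close to $1$, i.e.\ to concentrate the mass of the combination on the first-coordinate indices. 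That concentration step is what is missing from, and cannot be recovered by, your projection argument.
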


\begin{proof}
\parskip=0ex
    (a). Suppose that $a\in\SCD(B_X)$, with the determining sequence of slices $\{S_n\colon n\in\mathbb{N}\}$, where $S_n = S(B_X, x^*_n, \alpha_n)$, $x^*_n\in S_{X^*}$ for every $n\in\mathbb{N}$. Without loss of generality we can assume that $\alpha_n< 1$ for every $n\in\mathbb{N}$.

    Define for every $n,k\in\mathbb{N}$ a slice $S_n^k = S(B_{X\oplus_p Y}, (x^*_n,0), \alpha_n/k)$. We show that the countable collection of slices $\{S_n^k\colon n,k\in\mathbb{N}\}$ is determining for point $(a,0)$. Fix elements $z_n^k = (x^k_n,y^k_n)\in S_n^k$ for every $n,k\in\mathbb{N}$. It is easy to see that $x^k_n \in S_n$ and since the sequence $\{S_n\colon n\in\mathbb{N}\}$ determines point $a$, we have $a\in\overline{\conv}(\{x_n^k\colon n\in\mathbb{N}\})$ for every $k\in\mathbb{N}$. Furthermore, we see that
    \[
    1\geq \norm{(x^k_n,y^k_n)}^p_p = \norm{x_n^k}^p + \norm{y_n^k}^p > \left(1- \frac{\alpha_n}{k}\right)^p + \norm{y^k_n}^p>1-\frac{p\alpha_n}{k}+\norm{y^k_n}^p,
    \]
    from which
    \[
    \norm{y_n^k}^p< \frac{p\alpha_n}{k} < \frac{p}{k}.
    \]
    Pick an arbitrary $\varepsilon>0$ and find $K\in\mathbb{N}$ such that $p/K < \varepsilon/2$. By the argument presented before, we can find $z\in \conv(\{x_n^K\colon n\in\mathbb{N}\})$ such that $\norm{z-a}< (\varepsilon/2)^{1/p}$. Let $z=\sum_{n=1}^{\infty}\lambda_n x^K_n$, where $\sum_{n=1}^{\infty} \lambda_n=1$, $\lambda_n \in [0,1]$ and the number of non-zero elements $\lambda_n$ is finite. Now
    \begin{align*}
        \norm{(a,0)- \sum_{n=1}^{\infty}\lambda_n (x^K_n,y^K_n)}^p_p  &= \norm{\Big(a-\sum_{n=1}^{\infty}\lambda_n x^K_n, - \sum_{n=1}^{\infty}\lambda_n y^K_n\Big)}^p_p\\
        &= \norm{a-\sum_{n=1}^{\infty}\lambda_n x^K_n}^p + \norm{\sum_{n=1}^{\infty}\lambda_n y^K_n}^p\\
        &< \frac{\varepsilon}{2} + \sum_{n=1}^{\infty}\lambda_n \norm{y^K_n}^p
        < \frac{\varepsilon}{2} + \sum_{n=1}^{\infty}\lambda_n \cdot\frac{p}{K}\\
        &= \frac{\varepsilon}{2}+ \frac{p}{K}\sum_{n=1}^{\infty} \lambda_n \\
        &\leq \frac{\varepsilon}{2}+ \frac{p}{K}< \frac{\varepsilon}{2}+\frac{\varepsilon}{2}=\varepsilon,
    \end{align*}
    hence $(a,0)\in\overline{\conv}(\{z^K_n\colon n\in\mathbb{N}\})\subseteq\overline{\conv}(\{z^k_n\colon n,k\in\mathbb{N}\})$.

(b). Observe that it suffices to prove that $a\in \SCD(B_X)$ by Lemma~\ref{lemma: properties of SCD(A)}. Consider a sequence of slices $\{S_n\colon n\in\mathbb{N}\}$ of the form
    \[
    S_n = S(B_{X\oplus_p Y}, (x^*_n,y^*_n),\alpha_n),
    \]
where $\alpha_n>0$ and $\norm{(x^*_n,y^*_n)}_{q}=1$ for all $n\in\mathbb{N}$, which is determining for the point $(a,0)$.
Let us show that the sequence of slices $\{T_n\colon n\in\mathbb{N}\}$, where $T_n = S(B_X, x^*_n,\alpha_n)$, determines the point $a$. Pick for every $n\in\mathbb{N}$ an element $x_n\in T_n$. Our goal is to show that $a\in\overline{\conv}(\{x_n\colon n\in\mathbb{N}\})$. To do so, denote
    \[
    I:= \{n\in\mathbb{N}\colon \norm{x^*_n}\neq 0\}, \; J:=\mathbb{N}\setminus I,
    \]
and define elements $z_n\in B_{X\oplus_p Y}$ as follows:
    \[
    z_n=
    \begin{cases}
    (x_n,0), \; n\in I;\\
    (0,y_n), \; n\in J,
    \end{cases}
    \]
where for every $n\in\mathbb{N}$ the element $y_n$ is picked such that $y_n\in S(B_Y, y^*_n,\alpha_n)$. It is easy to see that every $z_n$ belongs to the slice $S_n$ and so, since the sequence $\{S_n\colon n\in\mathbb{N}\}$ is determining for $(a,0)$, we have $(a,0)\in\overline{\conv}(\{z_n\colon n\in\mathbb{N}\})$.

Fix $\varepsilon>0$ and find $\lambda_n\in [0,1]$ for every $n\in\mathbb{N}$, where finitely many of $\lambda_n$ are non-zero and $\sum_{n=1}^{\infty}\lambda_n =1$, such that
    \begin{equation}\label{close convex combination proposition 2.19}
    \norm{(a,0)-\sum_{n=1}^{\infty}\lambda_n z_n}^p_p< \Bigg(\frac{\varepsilon}{2}\Bigg)^p.
    \end{equation}
Then, we have
    \begin{align*}
    \norm{(a,0)-\sum_{n=1}^{\infty}\lambda_n z_n}^p_p &=\norm{(a,0)- \Big(\sum_{n\in I}\lambda_nx_n,0\Big) - \Big(0,\sum_{n\in J}\lambda_ny_n\Big)}^p_p\\
    &=\norm{\Big(a-\sum_{n\in I}\lambda_n x_n, -\sum_{n\in J} \lambda_n y_n\Big)}^p_p\\
    &= \norm{a-\sum_{n\in I}\lambda_n x_n}^p+ \norm{\sum_{n\in J} \lambda_n y_n}^p,
    \end{align*}
    and using the estimation (\ref{close convex combination proposition 2.19}), we get
    \[
    \norm{a-\sum_{n\in I}\lambda_n x_n} < \frac{\varepsilon}{2}.
    \]
    Notice that using the reverse triangle inequality
    \[
    \frac{\varepsilon}{2}> \norm{a-\sum_{n\in I}\lambda_n x_n} \geq \abs{\norm{a}-\norm{\sum_{n\in I}\lambda_n x_n}} = \abs{1-\norm{\sum_{n\in I}\lambda_n x_n}},
    \]
    therefore
    \[
    1-\frac{\varepsilon}{2}<\norm{\sum_{n\in I}\lambda_n x_n}\leq \sum_{n\in I}\lambda_n \leq 1.
    \]
    Denote $\Lambda := \sum_{n\in I}\lambda_n$ and see that $1-\varepsilon/2< \Lambda \leq 1$.

    Set \[
    x:=(1-\Lambda)x_K+\sum_{n\in I}\lambda_n x_n
    \]
    for some $K\in\mathbb{N}$ and notice that $x\in \conv(\{x_n\colon n\in \mathbb{N}\})$. Finally,
    \begin{align*}
        \|a-x\|\leq \norm{a-\sum_{n\in I}\lambda_n x_n}+(1-\Lambda)\|x_K\|<\frac{\varepsilon}{2} + \frac{\varepsilon}{2} = \varepsilon.
    \end{align*}
    Hence $a\in\overline{\conv}(\{x_n\colon n\in I\})\subseteq\overline{\conv}(\{x_n\colon n\in \mathbb{N}\}) $.
\end{proof}

We can now partially characterize the SCD points of the unit sphere of $X\oplus_1 Y$.

\begin{proposition}\label{proposition: 1-sum equivalent condition}
    Let $X$ and $Y$ be Banach spaces and $(a,b)\in S_{X\oplus_1Y}$, where $a\in X\setminus\{0\}$ and $b\in Y\setminus\{0\}$. Then, $(a,b)\in \SCD(B_{X\oplus_1 Y})$ if and only if $\frac{a}{\norm{a}}\in \SCD(B_X)$ and $\frac{b}{\norm{b}} \in \SCD(B_Y)$.
    \end{proposition}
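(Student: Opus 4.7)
For the forward direction, I would apply Proposition~\ref{prop: SCD points in finite p-sum}(a) with $p=1$: from $a/\|a\|\in\SCD(B_X)$ we obtain $(a/\|a\|,0)\in\SCD(B_{X\oplus_1 Y})$, and symmetrically $(0,b/\|b\|)\in\SCD(B_{X\oplus_1 Y})$. Writing $\alpha:=\|a\|$ and $\beta:=\|b\|$ (so $\alpha+\beta=1$), the identity $(a,b)=\alpha(a/\|a\|,0)+\beta(0,b/\|b\|)$ combined with the convexity of $\SCD(B_{X\oplus_1 Y})$ from Lemma~\ref{lemma: properties of SCD(A)} yields $(a,b)\in\SCD(B_{X\oplus_1 Y})$.

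For the backward direction, I would adapt the strategy from the proof of Proposition~\ref{prop: SCD points in finite p-sum}(b). Take a determining sequence of slices $\{S_n\}$ for $(a,b)$ with $S_n=S(B_{X\oplus_1 Y},(x^*_n,y^*_n),\alpha_n)$ and $\|(x^*_n,y^*_n)\|_\infty=\max(\|x^*_n\|,\|y^*_n\|)=1$. Partition $\mathbb{N}=I\sqcup J$ with $I:=\{n:\|x^*_n\|=1\}$ (so $\|y^*_n\|=1$ whenever $n\in J$), and set $T_n:=S(B_X,x^*_n,\alpha_n)$ for $n\in I$. My aim is to show that $\{T_n\}_{n\in I}$ is a determining sequence for $x_0:=a/\|a\|$ in $B_X$, which by Proposition~\ref{proposition: equivalent conditions of being determining for point}(iii) amounts to verifying $x_0\in\overline{\conv}(\{x_n:n\in I\})$ for every choice of $x_n\in T_n$, $n\in I$. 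Fix any $y_n\in S(B_Y,y^*_n,\alpha_n)$ for $n\in J$, and form $z_n:=(x_n,0)$ for $n\in I$ and $z_n:=(0,y_n)$ for $n\in J$; since $\|x^*_n\|=1$ on $I$ and $\|y^*_n\|=1$ on $J$, a direct verification gives $z_n\in S_n$ for every $n$, hence $(a,b)\in\overline{\conv}(\{z_n\})$ by Proposition~\ref{proposition: equivalent conditions of being determining for point}(iii) applied to $\{S_n\}$.

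Given $\varepsilon>0$, choose $\varepsilon_0\in(0,\alpha/2)$ (to be tuned) and select a finite convex combination $\sum_n\lambda_n z_n$ with $\|(a,b)-\sum_n\lambda_n z_n\|_1<\varepsilon_0$. The $\ell_1$-splitting yields $\|a-\sum_{n\in I}\lambda_n x_n\|<\varepsilon_0$ and $\|b-\sum_{n\in J}\lambda_n y_n\|<\varepsilon_0$. The crucial normalization step is to control $\Lambda:=\sum_{n\in I}\lambda_n$: combining the reverse triangle inequality with $\|a\|=\alpha$, $\|b\|=1-\alpha$, and the trivial bounds $\|\sum_{n\in I}\lambda_n x_n\|\leq\Lambda$ and $\|\sum_{n\in J}\lambda_n y_n\|\leq 1-\Lambda$, one forces $\Lambda\in(\alpha-\varepsilon_0,\alpha+\varepsilon_0)$. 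The renormalized point $\bar x:=\Lambda^{-1}\sum_{n\in I}\lambda_n x_n$ is then a convex combination of $\{x_n:n\in I\}$, and two triangle-inequality estimates give $\|\bar x-x_0\|\leq 2\varepsilon_0/\Lambda<2\varepsilon_0/(\alpha-\varepsilon_0)$, which is $<\varepsilon$ for $\varepsilon_0$ chosen small enough. This yields $x_0\in\overline{\conv}(\{x_n:n\in I\})$, proving $x_0\in\SCD(B_X)$; the symmetric argument gives $b/\|b\|\in\SCD(B_Y)$. The main obstacle is precisely this normalization: ensuring that the natural convex combination $\bar x$ of the $x_n$'s lands near $x_0$ itself rather than near some scalar multiple of $x_0$.
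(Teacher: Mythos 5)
Your proposal is correct and follows essentially the same route as the paper: sufficiency via Proposition~\ref{prop: SCD points in finite p-sum}(a) together with the convexity of $\SCD(B_{X\oplus_1 Y})$ from Lemma~\ref{lemma: properties of SCD(A)}, and necessity via the partition of $\N$ according to which coordinate functional has norm one, the lifted points $z_n$, the control $|\Lambda-\|a\||<\varepsilon_0$, and the same renormalization estimate $\|\bar x - a/\|a\|\,\|\leq 2\varepsilon_0/\Lambda$. The only cosmetic differences are that you label the two implications in the opposite order to the usual reading of the equivalence and that you index the determining family for $a/\|a\|$ by $I$ alone rather than by all of $\N$, neither of which affects the argument.
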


    \begin{proof}
\parskip=0ex
        \textit{Necessity}. Let the slices $S_n =  S(B_{X\oplus_1 Y}, (x^*_n,y^*_n),\alpha_n)$, $n\in\mathbb{N}$, determine the point $(a,b)$. We prove that the slices $\{T_n\colon n\in\mathbb{N}\}$, where $T_n = S(B_X, x^*_n,\alpha_n)$, are determining for the point $\frac{a}{\norm{a}}$. Pick for each $n\in\mathbb{N}$ an element $x_n\in T_n$. We assume that $\norm{(x^*_n,y^*_n)}_{\infty} = \max\{\norm{x^*_n}, \norm{y^*_n}\}$=1, and using this fact, we construct the sets
        \[
        I:=\{n\in\mathbb{N}\colon \norm{x^*_n}=1\},\quad J:= \mathbb{N}\setminus I=\{n\in\mathbb{N}\colon \norm{y^*_n}=1\}.
        \]
        Now, we define elements $z_n\in B_{X\oplus_1 Y}$ by
        \[
        z_n=
        \begin{cases}
        (x_n,0), \; n\in I;\\
        (0,y_n), \; n\in J,
        \end{cases}
        \]
        where $y_n$ belongs to slice $S(B_Y, y^*_n, \alpha_n)$ for every $n\in\mathbb{N}$. Evidently, $z_n \in S_n$ for each $n\in\mathbb{N}$ and consequently, since the sequence $\{S_n\colon n\in\mathbb{N}\}$ determines the point $(a,b)$, we have that $(a,b)\in\overline{\conv}(\{ z_n\colon n\in\mathbb{N}\})$.

        Let $\varepsilon>0$. Take $\delta\in (0,1)$ such that
        $\displaystyle
        \frac{2\delta}{\norm{a}^2-\delta\norm{a}}< \varepsilon
        $
        and find a convex combination $\sum_{n=1}^{\infty}\lambda_n z_n\in \conv(\{ z_n\colon n\in\mathbb{N}\})$, where $\lambda_n\in[0,1]$ and only finitely many $\lambda_n$ are non-zero, such that
        \[
        \norm{(a,b)-\sum_{n=1}^{\infty}\lambda_nz_n}_1<\delta.
        \]
        Then,
        \begin{align*}
        \delta&>\norm{(a,b)-\sum_{n=1}^{\infty}\lambda_nz_n}_1 = \norm{(a,b)-\sum_{n\in I}\lambda_n(x_n,0) - \sum_{n\in J}\lambda_n(0,y_n)}_1\\
        &=\norm{\Big(a-\sum_{n\in I}\lambda_nx_n, b-\sum_{n\in J}\lambda_ny_n\Big)}_1 = \norm{a-\sum_{n\in I}\lambda_nx_n} + \norm{b-\sum_{n\in J}\lambda_ny_n},
        \end{align*}
        therefore
        \[
        \delta > \norm{a-\sum_{n\in I}\lambda_n x_n} \geq \norm{a}-\norm{\sum_{n\in I}\lambda_n x_n}\geq \norm{a} - \sum_{n\in I}\lambda_n,
        \]
        meaning that $\sum_{n\in I}\lambda_n> \norm{a}-\delta$. Similarly, we can see that $\sum_{n\in J}\lambda_n > \norm{b}-\delta$.

        On the other hand,
        \[
        \sum_{n\in I}\lambda_n = 1- \sum_{n\in J}\lambda_n < 1-\norm{b}+\delta = \norm{a}+\norm{b}-\norm{b}+\delta = \norm{a}+\delta,
        \]
        so in conclusion we have $\abs{\norm{a}-\sum_{n\in I}\lambda_n}< \delta$. Now, by denoting $\Lambda:= \sum_{n\in I}\lambda_n$, we have that
        \[
        \sum_{n\in I}\frac{\lambda_n}{\Lambda}x_n\in\conv(\{x_n\colon n\in\mathbb{N}\})
        \]
        and
        \begin{align*}
            \norm{\frac{a}{\norm{a}}-\sum_{n\in I}\frac{\lambda_n}{\Lambda}x_n} &= \norm{\frac{a}{\norm{a}}-\frac{\sum_{n\in I}\lambda_nx_n}{\Lambda}} = \frac{\norm{\Lambda a - \norm{a}\sum_{n\in I}\lambda_n x_n}}{\Lambda\norm{a}}\\
            &=\frac{\norm{\Lambda a - \norm{a}\sum_{n\in I}\lambda_n x_n +\norm{a}a-\norm{a}a}}{\Lambda\norm{a}}\\
            &=\frac{\norm{a(\Lambda -\norm{a}) + \norm{a}(a-\sum_{n\in I}\lambda_n x_n))}}{\Lambda \norm{a}}\\
            &\leq \frac{ \norm{a(\Lambda-\norm{a})} + \norm{\norm{a}(a-\sum_{n\in I}\lambda_n x_n)}}{\Lambda \norm{a}}\\
            &\leq \frac{\norm{a}\abs{\Lambda-\norm{a}}+ \norm{a}\norm{a-\sum_{n\in I}\lambda_nx_n}}{\Lambda \norm{a}}<\frac{\delta+ \delta}{\Lambda \norm{a}}\\
            &=\frac{2\delta}{\Lambda \norm{a}}\leq \frac{2\delta}{(\norm{a}-\delta)\norm{a}} = \frac{2\delta}{\norm{a}^2-\delta \norm{a}}<\varepsilon,
        \end{align*}
         Therefore $\frac{a}{\norm{a}}\in\overline{\conv}(\{x_n\colon n\in\mathbb{N}\})$. The proof for the point $\frac{b}{\norm{b}}$ is analogous.

        \textit{Sufficiency}. Assume $\frac{a}{\norm{a}}\in \SCD(B_X)$ and $\frac{b}{\norm{b}} \in \SCD(B_Y)$ and observe that
        \[
        (a,b) = \norm{a}\Big(\frac{a}{\norm{a}},0\Big)+\norm{b}\Big(0, \frac{b}{\norm{b}}\Big),
        \]
        where $\norm{(a,b)}_1 = \norm{a}+\norm{b} = 1$. By Proposition \ref{prop: SCD points in finite p-sum}, (a), we have
        \[
        \Big(\frac{a}{\norm{a}},0\Big)\in\SCD(B_{X\oplus_1Y})\quad \text{ and }\quad \Big(0, \frac{b}{\norm{b}}\Big)\in\SCD(B_{X\oplus_1Y}),
        \]
         and since the set of $\SCD$ points is convex, we have $(a,b)\in\SCD(B_{X\oplus_1 Y})$.
    \end{proof}

\subsection{SCD points in infinite $\ell_p$ direct sums for $1<p<\infty$}

Our aim here is to show that it is possible that $0$ is the only SCD point of a unit ball. Actually, the same result shows that it is possible to get SCD points in the unit ball of an infinite $\ell_p$-sum even if there is no SCD points of the unit ball of any of the summands. Here is the main result in this line.

\begin{theorem}\label{theorem: Y_n = c_0(I) or Daugavet}
Let $\{X_n\colon n\in \N\}$ be a sequence of Banach spaces with the Daugavet property and write $X_p:=\left[\bigoplus\nolimits_{n=1}^{\infty} X_n\right]_{\ell_p}$ for  $1<p<\infty$. Then, $\SCD(B_{X_p})=\{0\}$.
\end{theorem}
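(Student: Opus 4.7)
The equality $\SCD(B_{X_p})=\{0\}$ splits into two inclusions.

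For the inclusion $0 \in \SCD(B_{X_p})$ the plan is to exhibit an explicit determining sequence of slices at the origin. For each $n \in \N$ pick any unit vector $e_n^* \in S_{X_n^*}$ (which exists because each $X_n$ has the Daugavet property, so is nontrivial) and extend it to $\tilde e_n^* \in S_{X_p^*}$ by placing $e_n^*$ in the $n$-th coordinate and zero in the others. Set $S_n := S(B_{X_p}, \tilde e_n^*, 1/n)$. For any $y_n \in S_n$, the slice condition gives $\|y_n^{(n)}\| > 1 - 1/n$, so the $\ell_p$-budget $\|y_n\|_p \leq 1$ forces the rest of $y_n$ to be small: writing $y_n = u_n + v_n$ with $u_n$ supported on the $n$-th slot, one has $\|v_n\|_p \to 0$. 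Because the $u_n$ live on pairwise disjoint coordinates,
\[
\Bigl\|\tfrac{1}{N}\sum_{n=1}^N u_n\Bigr\|_p^p = \tfrac{1}{N^p}\sum_{n=1}^N \|y_n^{(n)}\|^p \leq N^{1-p},
\]
which tends to $0$ because $p>1$; combining this with $\|\tfrac{1}{N}\sum_{n=1}^N v_n\|_p \leq \tfrac{1}{N}\sum_{n=1}^N \|v_n\|_p \to 0$ by Ces\`{a}ro, we obtain $\tfrac{1}{N}\sum_{n=1}^N y_n \to 0$ in norm, hence $0 \in \overline{\conv}\{y_n\}$. By Proposition~\ref{proposition: equivalent conditions of being determining for point}(iii), $\{S_n\}$ is determining for $0$.

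For the reverse inclusion I would argue by contradiction: suppose $x = (x_k) \in \SCD(B_{X_p})$ has some coordinate $x_{k_0} \neq 0$, and view $X_p$ as the two-summand $\ell_p$-sum $X_{k_0} \oplus_p Z$, where $Z := \left[\bigoplus\nolimits_{k\neq k_0} X_k\right]_{\ell_p}$. Imitating the proof of Proposition~\ref{prop: SCD points in finite p-sum}(b), the plan is to show that $x_{k_0}/\|x_{k_0}\|$ would then be an SCD point of $B_{X_{k_0}}$, contradicting $\SCD(B_{X_{k_0}}) = \emptyset$ from Example~\ref{newexample:Daguavet-SCD-empty}. Concretely, given a determining sequence $\{S_n = S(B_{X_p}, f_n, \alpha_n)\}$ with $\|f_n\|_q=1$, set $\lambda_n := \|f_n^{(k_0)}\|$ and, whenever $\lambda_n > 0$, define $T_n := S(B_{X_{k_0}}, f_n^{(k_0)}/\lambda_n, \alpha_n/3)$. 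For arbitrary $u_n \in T_n$ lift to $z_n \in B_{X_p}$ by placing $\lambda_n^{q/p}\, u_n$ in the $k_0$-th slot and filling the remaining slots with a H\"{o}lder-saturating unit vector of $Z$ scaled by $(1-\lambda_n^q)^{1/p}$; the algebraic identity $\lambda_n^{q/p+1} + (1-\lambda_n^q)^{1/p+1/q}=1$ then yields $z_n \in S_n$. The determining property produces $x \in \overline{\conv}\{z_n\}$, and projecting to the $k_0$-th coordinate, together with controlling the coefficient sum $\Lambda := \sum \rho_n \lambda_n^{q/p}$ and the corresponding convex combination $w$ of the $u_n$'s via the joint $\ell_p$-norm estimates, should yield $x_{k_0}/\|x_{k_0}\| \in \overline{\conv}\{u_n\}$, proving that $\{T_n\}$ determines $x_{k_0}/\|x_{k_0}\|$.

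The main technical obstacle is this final coefficient extraction: the naive projection realises $x_{k_0}$ only as a \emph{sub}-convex combination $\sum \rho_n \lambda_n^{q/p} u_n$ with coefficients summing to $\Lambda \leq 1$, and extracting the \emph{normalised} $x_{k_0}/\|x_{k_0}\|$ in the closed convex hull of the $u_n$'s requires a careful simultaneous use of the $\ell_p$-distance estimates in the $k_0$-th coordinate and in $Z$, exploiting $\|z_n\|_p \leq 1$ to force $\Lambda\,\|w\| \to \|x_{k_0}\|$ with $\|w\|$ close to $1$.
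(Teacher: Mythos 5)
Your first inclusion, $0\in\SCD(B_{X_p})$, is correct and is essentially the paper's Proposition~\ref{proposition: 0 is always SCD in infinite absolute sum}: same disjoint-support slices, with your Ces\`aro-mean bookkeeping replacing the paper's averaging of $K$ terms from a doubly-indexed family. That half is fine.

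The reverse inclusion is where the proposal breaks. You try to prove the general reduction ``$(a,b)\in\SCD(B_{E\oplus_p Z})$ with $a\neq 0$ implies $a/\|a\|\in\SCD(B_E)$'' by imitating Proposition~\ref{proposition: 1-sum equivalent condition}, and the obstacle you flag at the end is not a technicality --- it is a genuine gap. After projecting, you know $\bigl\|x_{k_0}-\sum_n\rho_n\lambda_n^{q/p}u_n\bigr\|<\delta$ and hence $\Lambda:=\sum_n\rho_n\lambda_n^{q/p}\geq\|x_{k_0}\|-\delta$, but nothing bounds $\Lambda$ from \emph{above}: the $u_n$ lie near $S_{X_{k_0}}$ yet may point in nearly opposite directions, so $\bigl\|\sum_n\rho_n\lambda_n^{q/p}u_n\bigr\|$ can be far smaller than $\Lambda$, and then $\frac{1}{\Lambda}\sum_n\rho_n\lambda_n^{q/p}u_n$ need not approach $x_{k_0}/\|x_{k_0}\|$. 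In the $\ell_1$ proof the matching upper bound $\sum_{n\in I}\lambda_n<\|a\|+\delta$ comes from $\sum_{n\in I}\lambda_n=1-\sum_{n\in J}\lambda_n$ together with $\|a\|+\|b\|=1$, i.e.\ from additivity of the $\ell_1$-norm; there is no analogue for $p>1$, and the constraint $\|z_n\|_p\leq 1$ that you invoke only reproduces the lower bound. (The paper's remark after Proposition~\ref{prop: SCD points in finite p-sum} already warns that part (b) of that proposition fails off the sphere, which is a symptom of the same phenomenon.)

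The paper avoids this entirely: it never proves a coordinatewise SCD reduction for $\ell_p$-sums. Instead, Proposition~\ref{prop: SCD points in direct sum of Daugavet + arbitrary are of the form (0,b)} uses the Daugavet property of the coordinate space from the start, via Lemma~\ref{lemma: Daugavet x_0 not in closed linear hull} (\cite[Lemma~2.8]{kadets1997Daugavet}): given the slices $T_n$ of $B_{X_{k_0}}$ one can \emph{choose} $u_n\in T_n$ with $a\notin\overline{\Span}(\{u_n\colon n\in\N\})$, so that every linear (not merely convex) combination $\sum_n\lambda_nr_nu_n$ stays at distance $\geq\varepsilon$ from $a$; this kills the cancellation problem because no normalisation of coefficients is ever needed. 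To repair your argument you would have to inject the Daugavet property at the point where you pick the $u_n$, which is exactly the paper's route.
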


With this result we may show an example as announced as the beginning of the subsection. Consider the Banach space $X_p=\left[\bigoplus\nolimits_{n=1}^{\infty} C[0,1]\right]_{\ell_p}$, where $1<p<\infty$. Then, by Theorem \ref{theorem: Y_n = c_0(I) or Daugavet} we have $\SCD(B_{X_p})=\{0\}$. It is also worth noting that by Theorem \ref{thm: sufficient condition to have empty set of SCD points}, we know that $\SCD(B_{C[0,1]})=\emptyset$.

For the proof of Theorem~\ref{theorem: Y_n = c_0(I) or Daugavet} we will combine Propositions \ref{proposition: 0 is always SCD in infinite absolute sum} and \ref{prop: SCD points in direct sum of Daugavet + arbitrary are of the form (0,b)} below, which are interesting by themselves.

\begin{proposition}\label{proposition: 0 is always SCD in infinite absolute sum}
Let $\{X_n\colon n\in \N\}$ be a sequence of arbitrary (non-trivial) Banach spaces and write $X_p:=\left[\bigoplus\nolimits_{n=1}^{\infty} X_n\right]_{\ell_p}$ for  $1<p<\infty$. Then, $0\in \SCD(B_{X_p})$.
\end{proposition}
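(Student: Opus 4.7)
The plan is to exhibit an explicit determining sequence of slices for the point $0\in B_{X_p}$, and then verify the determining property by means of Proposition~\ref{proposition: equivalent conditions of being determining for point}.(iii): once the slices $\{S_n\}$ are fixed, it will suffice to pick an arbitrary selection $x_n\in S_n$ and show $0\in\overline{\conv}\{x_n\colon n\in\N\}$. For each $n\in\N$, pick $x_n^*\in S_{X_n^*}$ (which exists since every $X_n$ is non-trivial) and let $\xi_n^*\in X_p^*$ denote the canonical embedding of $x_n^*$ (i.e., $\xi_n^*$ equals $x_n^*$ on the $n$-th coordinate and $0$ elsewhere), which satisfies $\|\xi_n^*\|=\sup_{B_{X_p}}\re\xi_n^*=1$. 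Choose positive scalars $\alpha_n<1$ small enough that $R:=\sum_{n=1}^\infty(p\alpha_n)^{1/p}<\infty$ (e.g.\ $\alpha_n=2^{-pn}$; observe that this also forces $\sum_n\alpha_n<\infty$), and set $S_n:=S(B_{X_p},\xi_n^*,\alpha_n)$.

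The key estimate is that any $(y_m)\in S_n$ satisfies $\|y_n\|>1-\alpha_n$, and hence $\sum_{m\neq n}\|y_m\|^p\leq 1-(1-\alpha_n)^p\leq p\alpha_n$; applied to an arbitrary selection $x_n\in S_n$, this shows that each $x_n$ is almost concentrated on its $n$-th coordinate, with $\|(x_n)_m\|\leq(p\alpha_n)^{1/p}$ whenever $m\neq n$. The plan is then to show that the Ces\`aro averages $z_N:=\frac{1}{N}\sum_{n=1}^N x_n$ converge to $0$ in norm, splitting the $\ell_p$-norm computation $\|z_N\|_p^p=\sum_m\|\tfrac{1}{N}\sum_{n=1}^N(x_n)_m\|^p$ into a diagonal block over $m\leq N$ and a tail block over $m>N$. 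The diagonal block is handled by the triangle inequality together with the concentration estimate: each of the $N$ terms is bounded by $((1+R)/N)^p$, so the total contribution is at most $(1+R)^p/N^{p-1}$. The tail block is handled by applying Jensen's inequality to the convex function $t\mapsto t^p$, exchanging the order of summation, and noting that $m>N\geq n$ forces $m\neq n$; the resulting bound is $\frac{p}{N}\sum_{n=1}^N\alpha_n$, which tends to $0$ because $\sum_n\alpha_n<\infty$.

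The main obstacle — and the one place where the hypothesis $p>1$ is really used — is the diagonal block: the decay $1/N^{p-1}\to 0$ only holds when $p>1$. Geometrically, this reflects the well-known fact that an average of $N$ almost pairwise disjointly supported unit vectors in $\ell_p$ has norm of order $N^{1/p-1}$, which tends to $0$ precisely when $p>1$; in contrast, for $p=1$ the naive averaging fails, which is coherent with what one would expect. Combining the two vanishing blocks gives $z_N\to 0$, hence $0\in\overline{\conv}\{x_n\colon n\in\N\}$, and the determining property of $\{S_n\}$ for $0$ follows.
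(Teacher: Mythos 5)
Your proof is correct and follows the same core strategy as the paper's: slices cut by coordinate functionals, the Bernoulli-type estimate $1-(1-\alpha)^p\leq p\alpha$ to show each selected point is concentrated on its own coordinate, and the fact that averages of $N$ almost disjointly supported unit vectors in an $\ell_p$-sum have norm $O(N^{1/p-1})\to 0$ precisely because $p>1$. The only organizational difference is that the paper uses a doubly indexed family $S_n^k$ of width $1/k$ and, for a given $\varepsilon$, averages a fresh batch $x_1^K,\dots,x_K^K$ drawn from the width-$1/K$ slices, whereas you fix a single sequence of slices with summable widths and show the Ces\`aro means of one selection converge to $0$; your variant needs the extra summability condition $R=\sum_n(p\alpha_n)^{1/p}<\infty$ to control the diagonal block, but in exchange produces a slightly leaner determining family.
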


\begin{proof}
\parskip=0ex
    Firstly, for each $n\in\mathbb{N}$ select an element $x^*_n\in S_{X^*_n}$ and for every $k\in\mathbb{N}$ define a slice
    \[
    S_n^k = S\Big(B_X, (\underbrace{0,0,\dots,0,x_n^*}_{n \text{ components}},0,0,0,\dots), \frac{1}{k}\Big).
    \]
    Let us prove that the countable collection of slices $\{S_n^k \colon n,k\in\mathbb{N}\}$ is determining for the point $0 = (0,0,\dots)\in B_X$. To this end, pick $x^k_n\in S^k_n$ for every $n,k\in\mathbb{N}$ and fix $\varepsilon>0$. It suffices to find $x \in \overline{\conv}(\{x_n^k\colon n,k\in\mathbb{N}\})$ such that $\norm{x}< \varepsilon$.

    Pick $K\in\mathbb{N}$ such that $(p/K)^{1/p} < \varepsilon/2$ and $K^{1/p-1}<\varepsilon/2$. We will show that then $x:=\sum_{n=1}^K\frac{1}{K}x_n^K$ is the element we are looking for.

    Now for given $n\in\mathbb{N}$,
    \[
    x_n^K = (w_{n,1}^K, w_{n,2}^K, \dots, w_{n,n}^K,w_{n,n+1}^K,\dots),
    \]
    so we have
    \begin{align*}
        1 - \frac{1}{K} &< \re (0,0,\dots,0,x^*_n,0,0,\dots)(w_{n,1}^K, w_{n,2}^K, \dots, w_{n,n}^K,w_{n,n+1}^K,\dots)\\
        &=\re x^*_n(w_{n,n}^K) \leq \norm{x^*_n} \norm{w_{n,n}^K} =\norm{w_{n,n}^K},
    \end{align*}
    hence $\norm{w_{n,n}^K} > 1 - 1/K$. Moreover, we have $x_n^K \in B_X$, which means that
    \[
    \norm{x_n^K}^p=\sum_{i=1}^{\infty} \norm{w_{n,i}^K}^p\leq 1.
    \]
    Recall also the generalization of the Bernoulli inequality, which states that
    \[
    (1+s)^r > 1 + rs
    \]
    for every $r>1$ and $s\neq 0$ satisfying $s> -1$ (see \cite[p.~34]{mitrinovic1970analytic}). Now, we can derive another estimation:
    \[
    \sum_{i\neq n}^{\infty} \norm{w_{n,i}^K}^p = \Big(\sum_{i=1}^{\infty} \norm{w_{n,i}^K}^p\Big) - \norm{w_{n,n}^K}^p < 1 - \Big(1 - \frac{1}{K}\Big)^p \leq  1- \Big(1-\frac{p}{K}\Big)= \frac{p}{K}.
    \]
    Define for each $n\in\mathbb{N}$ an element
    \[
    \widehat{x}_n^K = (0,0,\dots,w_{n,n}^K,0,0,\dots).
    \]
    Using the estimation above we get
    \begin{align*}
    \norm{x^K_{n}-\widehat{x}_n^K} &= \norm{(w_{n,1}^K, \dots, w_{n,n-1}^K, 0 , w_{n,n+1}^K, \dots)} = \Big(\sum_{i\neq n}^{\infty} \norm{w_{n,i}^K}^p\Big)^{1/p}
    < \Big(\frac{p}{K}\Big)^{1/p}.
    \end{align*}
    Finally, we have that
    \begin{align*}
        \Bigg\|\sum_{n=1}^K \frac{1}{K}x_n^K\Bigg\|&\leq \Bigg\|\sum_{n=1}^K \frac{1}{K}(x_n^K-\widehat{x}_n^K)\Bigg\|+\Bigg\|\sum_{n=1}^K\frac{1}{K} \widehat{x}_n^K\Bigg\|\\
        &\leq \frac{1}{K} \sum_{n=1}^K \|x_n^K-\widehat{x}_n^K\|+\frac{1}{K}\Bigg\|\sum_{n=1}^K \widehat{x}_n^K\Bigg\|\\
        &\leq \frac{1}{K}\cdot K\cdot\Bigg(\frac{p}{K}\Bigg)^{1/p}+\frac{1}{K}\Bigg(\sum_{n=1}^K \|w_{n,n}^K\|^p\Bigg)^{1/p}\leq \varepsilon/2+\frac{1}{K}\cdot K^{1/p}<\varepsilon.\qedhere
    \end{align*}
\end{proof}

Two comments are pertinent.

\begin{remark}
{\slshape
We note that a similar result to Proposition~\ref{proposition: 0 is always SCD in infinite absolute sum} does not hold for $p=1$ or $p=\infty$.\ } Indeed, if $Z$ is a space with the Daugavet property, then so are $X:=\left[\bigoplus\nolimits_{n=1}^{\infty} Z\right]_{\ell_1}$ and $Y:=\left[\bigoplus\nolimits_{n=1}^{\infty} Z\right]_{\ell_\infty}$ by \cite[Proposition~2.16]{kadets1997Daugavet}. But $\SCD(B_X)=\SCD(B_Y)=\emptyset$ by Example~\ref{newexample:Daguavet-SCD-empty}.
\end{remark}

\begin{remark}
{\slshape
    The converse of assertion (a) of Proposition~\ref{prop: SCD points in finite p-sum} does not hold and assertion (b) of the same proposition does not hold for $a$ with $\|a\|<1$.\ } Indeed, consider the space $Z:=\left[\bigoplus_{n=1}^{\infty} C[0,1]\right]_{\ell_2}$. Then, we can write $Z=C[0,1]\oplus_2 Y$, where $Y=\left[\bigoplus_{n=2}^{\infty} C[0,1]\right]_{\ell_2}$. By Proposition~\ref{proposition: 0 is always SCD in infinite absolute sum}, $(0,0)$ is an SCD point in $B_Z$, however, $0$ is not an SCD point in $B_{C[0,1]}$ because $C[0,1]$ has the Daugavet property.
\end{remark}

We now aim to prove that in a direct sum of two spaces $E\oplus_p Y$ such that $E$ has the Daugavet property, the SCD points of the unit ball of $E\oplus_p Y$ can only be of the form $(0,b)$. To this end, we need the following result which is a consequence of \cite[Lemma~2.8]{kadets1997Daugavet}, as can be seen in the proof of \cite[Example~2.13]{aviles_slicely_2010}.

\begin{lemma}[\mbox{\cite[Lemma~2.8]{kadets1997Daugavet}, see \cite[Example~2.13]{aviles_slicely_2010}}]
\label{lemma: Daugavet x_0 not in closed linear hull}
Let X be a Banach space with the Daugavet property. Then, for every sequence of slices $\{S_n\colon n\in\mathbb{N}\}$ of $B_X$ and every $x\in S_X$, there is a sequence $\{x_n\colon n\in \mathbb N\}$ with $x_n\in S_n$ for each $n\in\mathbb{N}$, such that $x\not\in\overline{\Span}(\{x_n\colon n\in\mathbb{N}\})$.
\end{lemma}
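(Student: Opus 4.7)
My plan is to build the $x_n$ greedily, exploiting a well-known enhancement of the classical Daugavet lemma. Specifically, in any Banach space $X$ with the Daugavet property, for every slice $S$ of $B_X$, every finite-dimensional subspace $E \subset X$, and every $\varepsilon > 0$, one can find $y \in S$ such that
\[
\bigl\|u + \lambda y\bigr\| \geq (1-\varepsilon)\bigl(\|u\| + |\lambda|\bigr) \qquad \text{for all } u \in E,\ \lambda \in \mathbb{R}.
\]
This is the substantive input and is precisely the content of \cite[Lemma~2.8]{kadets1997Daugavet}; it can be derived from the one-vector statement \cite[Lemma~2.2]{kadets1997Daugavet} via a finite $\delta$-net argument on the unit sphere of $E$.

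With this tool in hand, I would set $x_0 := x$ and fix positive reals $\varepsilon_n$ (for instance $\varepsilon_n = 2^{-n-1}$) so that $C := \prod_{n=1}^{\infty}(1-\varepsilon_n) > 0$. Inductively, having chosen $x_1,\dots,x_{n-1}$, I would apply the enhanced lemma to $S_n$, to the finite-dimensional subspace $E_n := \Span\{x_0,\ldots,x_{n-1}\}$, and to the tolerance $\varepsilon_n$, producing $x_n \in S_n$ that satisfies
\[
\bigl\|u + \lambda x_n\bigr\| \geq (1-\varepsilon_n)\bigl(\|u\| + |\lambda|\bigr) \qquad \text{for all } u \in E_n,\ \lambda \in \mathbb{R}.
\]

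To finish, I would verify that $x \notin \overline{\Span}\{x_n \colon n \geq 1\}$. For arbitrary scalars $\lambda_0,\dots,\lambda_N$, write $\sigma_k := \sum_{i=0}^k \lambda_i x_i$. Since $\sigma_k = \sigma_{k-1} + \lambda_k x_k$ with $\sigma_{k-1}\in E_k$, the defining property of $x_k$ yields $\|\sigma_k\| \geq (1-\varepsilon_k)\|\sigma_{k-1}\|$; telescoping down to $\|\sigma_0\| = |\lambda_0|$ gives $\|\sigma_N\| \geq C\,|\lambda_0|$. Specializing to $\lambda_0 = 1$, one obtains $\dist\bigl(x,\Span\{x_n \colon n \geq 1\}\bigr) \geq C > 0$, which is the conclusion. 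The main obstacle is the enhanced Daugavet lemma stated in the first paragraph; once it is granted, the inductive construction and the telescoping estimate are entirely routine.
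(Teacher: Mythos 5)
Your argument is correct and is essentially the standard derivation that the paper itself merely cites: the enhancement of the Daugavet lemma you invoke is precisely \cite[Lemma~2.8]{kadets1997Daugavet}, and your inductive choice of $x_n\in S_n$ against $E_n=\Span\{x_0,\dots,x_{n-1}\}$ followed by the telescoping estimate $\|\sigma_N\|\geq \prod_{k=1}^N(1-\varepsilon_k)\,|\lambda_0|\geq C$ is exactly how the conclusion is obtained in the proof of \cite[Example~2.13]{aviles_slicely_2010}. No gaps.
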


We may now state and proof the indicated result.

\begin{proposition}
\label{prop: SCD points in direct sum of Daugavet + arbitrary are of the form (0,b)}
    Consider the Banach space $X:=E\oplus_p Y$, where $E$ has the Daugavet property, $Y$ is arbitrary, and $1<p<\infty$. If $(a,b)\in\SCD(B_X)$, then $a=0$.
\end{proposition}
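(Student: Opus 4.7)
The plan is to argue by contradiction: assume $(a,b)\in\SCD(B_X)$ with $a\neq 0$ and exhibit a choice of $z_n=(e_n,y_n)\in T_n$, taken from a determining sequence of slices $T_n=S(B_X,(x_n^*,y_n^*),\alpha_n)$ for $(a,b)$, such that $a\notin\overline{\Span}\{e_n\}$. Since the projection $\pi_E\colon X\to E$ is continuous and linear, the inclusion $(a,b)\in\overline{\conv}\{z_n\}$ forced by Proposition~\ref{proposition: equivalent conditions of being determining for point}.(iii) would then yield $a\in\overline{\conv}\{e_n\}\subseteq\overline{\Span}\{e_n\}$, a contradiction. Throughout, I normalize so that $\|(x_n^*,y_n^*)\|_q=1$ with $q$ the conjugate exponent of $p$.

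The construction splits according to $I:=\{n\colon x_n^*=0\}$ and $J:=\{n\colon x_n^*\neq 0\}$. For $n\in I$ one has $\|y_n^*\|=1$ and I would set $e_n=0$, paired with any $y_n\in B_Y$ with $\re y_n^*(y_n)>1-\alpha_n$. The heart of the argument is $J$. For each $n\in J$, I would first select $(e_0^{(n)},y_0^{(n)})\in T_n$ with $\|y_0^{(n)}\|<1$; this can always be arranged by a small $\ell_p$-perturbation (if one only finds elements with $\|y_0\|=1$, move by $\varepsilon$ in the direction of some $e''\in S_E$ with $\re x_n^*(e'')$ close to $\|x_n^*\|$ and shrink the second coordinate by $(1-\varepsilon^p)^{1/p}$, which remains in $T_n$ for small $\varepsilon$ by continuity). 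Setting $r_n:=(1-\|y_0^{(n)}\|^p)^{1/p}>0$, the key observation is that when the second coordinate is frozen at $y_0^{(n)}$ and the first is written as $r_n\tilde e$ with $\tilde e\in B_E$, the condition $(r_n\tilde e,y_0^{(n)})\in T_n$ collapses to the single linear inequality $\re x_n^*(\tilde e)>(1-\alpha_n-\re y_n^*(y_0^{(n)}))/r_n$, which defines the slice
\[
\tilde S_n:=S\Bigl(B_E,\,x_n^*,\,\|x_n^*\|-\tfrac{1-\alpha_n-\re y_n^*(y_0^{(n)})}{r_n}\Bigr)
\]
of $B_E$. This slice is non-empty because $r_n\|x_n^*\|+\re y_n^*(y_0^{(n)})\geq \re x_n^*(e_0^{(n)})+\re y_n^*(y_0^{(n)})>1-\alpha_n$, using $\|e_0^{(n)}\|\leq r_n$.

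With these slices of $B_E$ available, I would invoke Lemma~\ref{lemma: Daugavet x_0 not in closed linear hull} for the Daugavet space $E$, applied to the sequence $\{\tilde S_n\}_{n\in J}$ and the unit vector $a/\|a\|\in S_E$, producing $\tilde e_n\in \tilde S_n$ with $a/\|a\|\notin\overline{\Span}\{\tilde e_n\colon n\in J\}$, equivalently $a\notin\overline{\Span}\{r_n\tilde e_n\colon n\in J\}$. Setting $e_n:=r_n\tilde e_n$ and $z_n:=(e_n,y_0^{(n)})\in T_n$ for $n\in J$, and combining with the vanishing first coordinates for $n\in I$, one obtains a family $\{z_n\}_{n\in\mathbb N}\subset T_n$ whose first coordinates span a closed subspace missing $a$, completing the contradiction.

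The main obstacle I anticipate is the step of replacing the slices $T_n$ of $B_X$ by genuine slices of $B_E$: the naive projections $\pi_E(T_n)$ are convex but are not themselves slices of $B_E$, and elements of $T_n$ whose first coordinate is a prescribed $e\in B_E$ may fail to exist. The trick of fixing one element $(e_0^{(n)},y_0^{(n)})\in T_n$ with $\|y_0^{(n)}\|<1$ and then rescaling the $E$-factor by $1/r_n$ linearizes the problem and returns the slice structure needed to feed into Lemma~\ref{lemma: Daugavet x_0 not in closed linear hull}; securing $r_n>0$ in every $n\in J$ is a short perturbation argument that fits naturally into the $\ell_p$ geometry.
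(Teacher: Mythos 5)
Your proposal is correct and follows essentially the same route as the paper's proof: both argue the contrapositive, split the indices according to whether the $E$-component of the defining functional vanishes, feed slices of $B_E$ derived from the given slices into Lemma~\ref{lemma: Daugavet x_0 not in closed linear hull}, and then rescale the resulting vectors so as to re-enter the original slices while keeping the first coordinates inside a closed linear span that misses $a$. The only difference is technical: the paper scales both coordinates by the norms of an almost-optimizing pair in $S_n$ and verifies the slice condition by a direct estimate, whereas you freeze the second coordinate at some $y_0^{(n)}$ of norm less than one and encode the slice condition as an explicit slice $\tilde S_n$ of $B_E$ --- both devices accomplish the same re-entry step.
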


\begin{proof}
\parskip=0ex
We prove the contrapositive. Let $a\neq 0$ and fix an arbitrary sequence of slices $\{S_n\colon n\in\mathbb{N}\}$, where $S_n = S(B_X, (a^*_n,b^*_n),\alpha_n)$. We may assume that $\norm{(a^*_n,b^*_n)}_q=1$ ($q$ is the H\"{o}lder conjugate of $p$). We aim to find a sequence $(x_n)$ such that $x_n\in S_n$ for each $n\in\mathbb{N}$ and $(a,b)\not\in\overline{\conv}\{x_n\colon n\in\mathbb{N}\}$. Define
        \[
        A := \{n\in\mathbb{N}\colon a^*_n\neq 0\}, \quad B= \mathbb{N}\setminus A = \{n\in\mathbb{N}\colon a^*_n= 0\}.
        \]
        Using these sets, let us define the desired sequence. First, if $n\in B$, we will pick $x_n = (0,v_n)\in S_n$, where the first coordinate can be taken zero due to the fact that $a^*_n=0$. Now, for every $n\in A$ define a slice
        \[
        T_n = S\Big(B_E,a^*_n, \frac{\alpha_n}{4}\Big),
        \]
        and consider the sequence of slices $\{T_n\colon n\in A\}$. By Lemma \ref{lemma: Daugavet x_0 not in closed linear hull}, we can find a sequence $(u_n)_{n\in A}$ so that $u_n\in T_n$ for all $n\in A$ and
        \[
        a\not\in\overline{\Span}(\{u_n\colon n\in A\}).
        \]
The latter means that there exists $\varepsilon>0$ such that $\norm{a-u}\geq \varepsilon$ for every $u\in \Span(\{u_n\colon n\in A\})$.
        We claim the following:
        \begin{equation}\label{claim: condition for pair consisting u_n}
            \forall n\in A\ \exists r_n,s_n\in \mathbb{R} \ \exists q_n\in B_{Y} \text{ such that } u_n\in T_n \ \implies \ (r_nu_n,s_nq_n)\in S_n.
        \end{equation}
Indeed, fix $n\in A$ and observe that we can find $(e_n,y_n)\in S_n$ such that
        \begin{equation*}
           \re (a^*_n,b^*_n)(e_n,y_n) =\re a^*_n(e_n)+ \re b^*_n(y_n) > 1-\frac{\alpha}{2}.
        \end{equation*}
        Take $r_n := \norm{e_n}$ and $s_n:=\norm{y_n}$. In addition, find $q_n\in B_Y$ such that
        \[
       \re  b^*_n(q_n) > \norm{b^*_n}- \frac{\alpha}{4}.
        \]
        By assumption, $u_n\in T_n$, which means that
        \[
        \re a^*_n(u_n)>\norm{a^*_n}-\frac{\alpha}{4}.
        \]
        To conclude, first notice that
        \[
        (r_nu_n,s_nq_n) = (\norm{e_n}u_n,\norm{y_n}q_n)\in B_{X}.
        \]
as we have that
        \begin{align*}
        \norm{(r_nu_n,s_nq_n)}_p^p &= \norm{(\norm{e_n}u_n,\norm{y_n}q_n)}_p^p\\
        &= \norm{e_n}^p\norm{u_n}^p + \norm{y_n}^p\norm{q_n}^p\\
        &\leq \norm{e_n}^p + \norm{y_n}^p \leq 1.
        \end{align*}
Finally, using all the estimations derived above, we get
        \begin{align*}
            \re (a^*_n,b^*_n)(r_nu_n,s_nq_n) &= \re a^*_n(u_n)\norm{e_n} + \re  b^*_n(q_n)\norm{y_n}\\
            &> \Big(\norm{a^*_n}-\frac{\alpha_n}{4}\Big)\norm{e_n}+ \Big(\norm{b^*_n}-\frac{\alpha_n}{4}\Big)\norm{y_n}\\
            &= \norm{a^*_n}\norm{e_n}+\norm{b^*_n}\norm{y_n} - \frac{\alpha_n}{4}\Big(\norm{e_n}+\norm{y_n}\Big)\\
            &\geq \re a^*_n(e_n)+\re b^*_n(y_n) -\frac{\alpha_n}{2} >1-\frac{\alpha_n}{2}- \frac{\alpha_n}{2} = 1-\alpha_n,
        \end{align*}
        which means that $(r_nu_n,s_nq_n)\in S_n$, as claimed.

Now, we are able to pick, by using the claim, for each $n\in A$
        \[
        x_n = (r_nu_n, s_nq_n)\in S_n.
        \]
        Pick arbitrary $\lambda_n\in [0,1]$, finitely many being non-zero with $\sum_{n=1}^{\infty}\lambda_n = 1$. We have
        \begin{align*}
            \norm{(a,b)-\sum_{n=1}^{\infty}\lambda_nx_n}_p &=\norm{(a,b)- \sum_{n\in A}\lambda_n(r_nu_n,s_nq_n)-\sum_{n\in B}\lambda_n (0,v_n)}_p \\
            &=\norm{\Big(a-\sum_{n\in A}\lambda_n r_nu_n, b-\sum_{n\in B}\lambda_n (s_nq_n - v_n)\Big)}_p\\
            &= \Bigg(\norm{a-\sum_{n\in A}\lambda_n r_nu_n}^p +\norm{ b-\sum_{n\in B}\lambda_n (s_nq_n - v_n)}^p\Bigg)^{1/p}\\
            &\geq \norm{a-\sum_{n\in A}\lambda_n r_nu_n}\geq \varepsilon,
        \end{align*}
        which means that $(a,b)\not\in\overline{\conv}(\{x_n\colon n\in\mathbb{N}\})$.
    \end{proof}

We can now glue the two proposition above to provide the pending proof of Theorem~\ref{theorem: Y_n = c_0(I) or Daugavet}.

\begin{proof}[Proof of Theorem~\ref{theorem: Y_n = c_0(I) or Daugavet}]
\parskip=0ex
First, we know that $0\in\SCD(B_{X_p})$ by Proposition~\ref{proposition: 0 is always SCD in infinite absolute sum}. Assume now that $(a_n)\in \SCD(B_{X_p})$. We show that $a_n=0$ for all $n\in\mathbb{N}$. Indeed, for $n\in \N$, write
        \[
        X_p=X_n\oplus_p \left[\bigoplus\nolimits_{k\neq n} X_k\right]_{\ell_p}.
        \]
Then, by Proposition~\ref{prop: SCD points in direct sum of Daugavet + arbitrary are of the form (0,b)}, we have that $a_n=0$.
    \end{proof}

\section{SCD points in projective tensor products}\label{sec: SCD points in tensor products}

Recall that the projective tensor product of two Banach spaces $X$ and $Y$, denoted by $X \widehat{\otimes}_\pi Y$, is the completion of the algebraic tensor product $X \otimes Y$ under the norm given by
$$
\|u\|:=\inf \left\{\sum_{i=1}^n\left\|x_i\right\|\left\|y_i\right\|\colon u=\sum_{i=1}^n x_i \otimes y_i\right\}.
$$
It follows easily from the definition that $B_{X \widehat{\otimes}_\pi Y}=\overline{\conv}\left(B_X \otimes B_Y\right)=\overline{\conv}\left(S_X \otimes S_Y\right)$, where
$$
A\otimes B := \{x\otimes y\colon x\in A,\; y\in B\}
$$
for subsets $A\subset X$ and $B\subset Y$. It is well known that $\left(X \widehat{\otimes}_\pi Y\right)^*=\mathcal L\left(X, Y^*\right)=\mathcal B(X\times Y)$ \cite[p.~24]{RyanTensor}.

We will now present the main result of this section.

\begin{theorem}\label{theorem: SCD tensor denting is SCD}
Let $X$ and $Y$ be Banach spaces. If $a\in\dent(B_X)$ and $b\in\SCD(B_Y)\setminus \{0\}$, then $a\otimes b\in \SCD\bigl(B_{X\pten Y}\bigr)$.
\end{theorem}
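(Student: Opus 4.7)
The plan is to exhibit a concrete countable family of slices of $B_{X \widehat{\otimes}_\pi Y}$ that is determining for $a \otimes b$. Since $a \in \dent(B_X)$, I fix slices $S_k = S(B_X, x_k^*, \alpha_k)$ with $x_k^* \in S_{X^*}$, $a \in S_k$, and $\diam(S_k) < 1/k$. Since $b \in \SCD(B_Y)$, I fix a determining sequence $T_n = S(B_Y, y_n^*, \beta_n)$ for $b$ with $y_n^* \in S_{Y^*}$. The candidate family is
\[
\bigl\{U_{k,n,j} := S\bigl(B_{X \widehat{\otimes}_\pi Y},\, x_k^* \otimes y_n^*,\, 1/j\bigr) : k, n, j \in \mathbb{N}\bigr\},
\]
where each $x_k^* \otimes y_n^*$ is viewed as the norm-one bilinear form in $\mathcal{B}(X \times Y) = (X \widehat{\otimes}_\pi Y)^*$.

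To verify the family determines $a \otimes b$, I would start with an arbitrary slice $S = S(B_{X \widehat{\otimes}_\pi Y}, \Phi, \gamma)$ containing $a \otimes b$, normalize $\|\Phi\| = 1$, write $\phi \in \mathcal L(X, Y^*)$ for the corresponding operator, and set $\xi := \phi(a)$, so that $\re \xi(b) > 1 - \gamma$ and $\delta := \re \xi(b) - (1-\gamma) > 0$. The set $R := \{y \in B_Y : \re \xi(y) > 1 - \gamma + \delta/3\}$ is a slice of $B_Y$ containing $b$, so using the SCD property of $b$ I can pick $n$ with $T_n \subseteq R$; by reflection this also gives $-T_n \subseteq -R = \{y : \re \xi(y) < -(1-\gamma+\delta/3)\}$. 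Then I would pick $k$ with $1/k < \delta/6$, fix $0 < \eta < \min\{\alpha_k, \beta_n\}$, and take $j$ large enough; the goal reduces to showing $U_{k,n,j} \subseteq S$.

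Given $u \in U_{k,n,j}$, I would use $B_{X \widehat{\otimes}_\pi Y} = \overline{\conv}(B_X \otimes B_Y)$ together with openness of $U_{k,n,j}$ to approximate $u$ in norm by a convex combination $\tilde u = \sum_i \lambda_i x_i \otimes y_i \in \conv(B_X \otimes B_Y) \cap U_{k,n,j}$. Split the indices into $I_{++} = \{i : \re x_k^*(x_i) > 1-\eta,\, \re y_n^*(y_i) > 1-\eta\}$, $I_{--} = \{i : \re x_k^*(x_i) < -(1-\eta),\, \re y_n^*(y_i) < -(1-\eta)\}$, and the remainder $I_0$. A short case analysis gives $\re(x_k^*(x_i) y_n^*(y_i)) \leq 1 - \eta$ for $i \in I_0$, so the defining inequality for $U_{k,n,j}$ forces $\sum_{i \in I_0} \lambda_i < 1/(j\eta)$. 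For $i \in I_{++}$: $x_i \in S_k$ yields $\|x_i - a\| < 1/k$ and $y_i \in T_n \subseteq R$ yields $\re \xi(y_i) > 1 - \gamma + \delta/3$, whence $\re \Phi(x_i, y_i) = \re \xi(y_i) + \re \langle \phi(x_i - a), y_i \rangle > 1 - \gamma + \delta/6$. For $i \in I_{--}$: $\|x_i + a\| < 1/k$ and $-y_i \in T_n \subseteq R$ give $\re \xi(-y_i) > 1 - \gamma + \delta/3$, so $\re \Phi(x_i, y_i) = \re \xi(-y_i) + \re \langle \phi(x_i + a), y_i \rangle > 1 - \gamma + \delta/6$ as well. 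Combining with the trivial bound $\re \Phi \geq -1$ on $I_0$, a sufficiently large $j$ will make $\re \Phi(\tilde u)$ strictly exceed $1 - \gamma$ with a uniform gap, hence $\re \Phi(u) > 1 - \gamma$ after passing to the limit, and $u \in S$.

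The main technical obstacle is the treatment of the antipodal case $I_{--}$: indices with $x_k^*(x_i) \approx -1$ do contribute positively to the defining inequality of $U_{k,n,j}$ (since their $y$-counterparts then satisfy $y_n^*(y_i) \approx -1$), yet the denting hypothesis only controls $x$ near $a$, not near $-a$. The key observation unlocking this is that $(-a) \otimes (-y) = a \otimes y$, so the inclusion $T_n \subseteq R$ automatically propagates to $-T_n \subseteq -R$, restoring the required control on $\re \xi(y_i)$ (up to sign) on the antipodal side. In the complex scalar case, intermediate phases must additionally be accommodated; the cleanest fix is to enlarge the family to $\{q \cdot x_k^* \otimes y_n^* : q \in Q\}$ for $Q$ a countable dense subset of the unit circle, and replace the two-fold sign split by a finer partition indexed by phase arguments.
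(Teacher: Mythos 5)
Your argument is correct (at least over the reals) and shares the paper's overall architecture --- the same countable family $S\bigl(B_{X\pten Y},x_k^*\otimes y_n^*,1/j\bigr)$, the same use of the small-diameter slices of $a$ to perturb $\Phi(x_i,\cdot)$ back to $\xi=\phi(a)$, and the same use of the determining slices of $b$ against the slice $\{y\in B_Y\colon \re\xi(y)>1-\gamma+\delta/3\}$ --- but it replaces the paper's key technical ingredient by a direct computation. The paper invokes the S\'anchez-P\'erez--Werner lemma, which states that $S(B_{X\pten Y},\Psi,\varepsilon^2)\subseteq \overline{\conv}\{x\otimes y\colon \re\Psi(x,y)>1-\varepsilon\}+4\varepsilon B_{X\pten Y}$, applied to $\Psi=x_k^*\otimes y_n^*$, and then a rotation trick to place the resulting elementary tensors into $S_k\otimes T_n$. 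You instead approximate an arbitrary $u$ in the small slice by a convex combination $\sum_i\lambda_i x_i\otimes y_i$ lying in the same slice and run the mass estimate $\sum_{i\in I_0}\lambda_i<1/(j\eta)$ by hand; this is in effect a self-contained proof of the special case of that lemma you need, so your route is more elementary and avoids the external citation, at the cost of redoing the bookkeeping. Your treatment of the antipodal indices $I_{--}$ via $(-x)\otimes(-y)=x\otimes y$ is exactly the $\theta=-1$ instance of the paper's rotation trick, and it is handled correctly.

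The one soft spot is the complex case, which you flag but do not resolve correctly: enlarging the family to $\{q\,x_k^*\otimes y_n^*\colon q\in Q\}$ does not help, because membership in such a slice only constrains the phase of the \emph{product} $x_k^*(x_i)y_n^*(y_i)$, not the individual phases, so the partition into ``deep'' and ``shallow'' indices still cannot be read off. The clean fix is per-index rotation, i.e.\ the full version of what you already did for $I_{--}$: for each $i$ with $\re\bigl(x_k^*(x_i)y_n^*(y_i)\bigr)>1-\eta$, choose $\theta_i$ with $|\theta_i|=1$ and $x_k^*(\theta_i x_i)=|x_k^*(x_i)|\geq 0$; since $x_i\otimes y_i=(\theta_i x_i)\otimes(\theta_i^{-1}y_i)$ and $x_k^*(\theta_i x_i)\cdot\re\,y_n^*(\theta_i^{-1}y_i)>1-\eta$ forces both factors above $1-\eta$, one gets $\theta_i x_i\in S_k$ and $\theta_i^{-1}y_i\in T_n$ and the rest of your estimate goes through unchanged, with no enlargement of the family. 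With that substitution your proof is complete in both scalar fields.
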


We need the following useful lemma from \cite{WernerTensor}.

\begin{lemma}[{\cite[Lemma~3.4]{WernerTensor}}]\label{lemma tensor: Werner composition}
Suppose that we have a norm one bilinear form $B\in \mathcal B(X\times Y)=(X\pten Y)^*$ and $\varepsilon>0$. Then,
\[
S(B_{X\pten Y}, B, \varepsilon^2) \subseteq \overline{\conv} (\{x\otimes y\colon x\in B_X,\, y\in B_Y,\, \real B(x,y)> 1-\varepsilon\}) + 4\varepsilon B_{X\pten Y}.
\]
\end{lemma}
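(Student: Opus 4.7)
The plan is to take a near-optimal projective representation of the given element of the slice and split its summands into a ``good'' part---collecting those elementary tensors $x\otimes y$ on which $\real B(x,y)$ is already close to $1$---and a ``bad'' remainder; the renormalized good part will lie in the required closed convex hull, while the bad part will be small in norm.

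Concretely, given $u\in S(B_{X\pten Y}, B, \varepsilon^2)$, I would fix an auxiliary small $\delta>0$ (to be tuned, e.g.\ $\delta=\varepsilon^3$) and use the definition of the projective norm to write $u=\sum_{i} \mu_i\, x_i\otimes y_i$ with $x_i\in S_X$, $y_i\in S_Y$, $\mu_i\geq 0$, and $\sum_i \mu_i \leq \|u\|+\delta \leq 1+\delta$. Partitioning the indices as $G:=\{i:\real B(x_i,y_i)>1-\varepsilon\}$ and $B':=\mathbb{N}\setminus G$, I set $\lambda_G:=\sum_{i\in G}\mu_i$, $s:=\sum_{i\in B'}\mu_i$, and $u_G:=\sum_{i\in G}\mu_i\, x_i\otimes y_i$.

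The key quantitative step exploits $\|B\|=1$, so that $\real B(x_i,y_i)\leq 1$ everywhere and $\real B(x_i,y_i)\leq 1-\varepsilon$ on $B'$, producing
\[
1-\varepsilon^2 \,<\, \real B(u) \,=\, \sum_i \mu_i\,\real B(x_i,y_i) \,\leq\, \lambda_G + (1-\varepsilon)s.
\]
Combined with the upper bound $\lambda_G+s\leq 1+\delta$, this immediately yields $\varepsilon s\leq \delta+\varepsilon^2$, hence $s\leq \varepsilon+\delta/\varepsilon$, and, symmetrically, $|1-\lambda_G|\leq \max(\delta,\varepsilon^2+s)$.

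The candidate is $v:=u_G/\lambda_G$, a convex combination of elementary tensors from the target set, hence in its closed convex hull. Using $\|u_G\|\leq \lambda_G$ (as each $x_i\otimes y_i$ has norm one), the decomposition $u-v=\tfrac{\lambda_G-1}{\lambda_G}u_G+(u-u_G)$ gives $\|u-v\|\leq |1-\lambda_G|+s$, which stays below $4\varepsilon$ once $\delta$ is taken sufficiently small. The main (mild) obstacle is the bookkeeping needed to land exactly at the advertised constant $4$: one must optimize $\delta$ and dispose of two corner cases---the degenerate possibility $\lambda_G=0$, which is ruled out by the same inequalities for $\varepsilon<1/2$, and the trivial regime $\varepsilon\geq 1/2$, where $4\varepsilon B_{X\pten Y}\supseteq B_{X\pten Y}$ already contains $u$.
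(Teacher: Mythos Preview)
The paper does not supply its own proof of this lemma: it is quoted verbatim from \cite[Lemma~3.4]{WernerTensor} and used as a black box in the proof of Theorem~\ref{theorem: SCD tensor denting is SCD}. So there is no ``paper's proof'' to compare against.

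That said, your argument is correct and is essentially the standard one. A couple of minor remarks on the bookkeeping. First, for elements of the \emph{completion} $X\pten Y$ (rather than the algebraic tensor product) you need the well-known representation $u=\sum_{i=1}^\infty \mu_i\,x_i\otimes y_i$ with $\mu_i\geq 0$, $x_i\in S_X$, $y_i\in S_Y$ and $\sum_i\mu_i<\|u\|+\delta$; this is standard (see e.g.\ \cite{RyanTensor}) but worth stating explicitly since the paper's definition of the projective norm uses only finite sums. Second, your estimate $\|u-v\|\leq |1-\lambda_G|+s$ together with $s<\varepsilon+\delta/\varepsilon$ and $|1-\lambda_G|\leq \varepsilon^2+s$ already gives $\|u-v\|<2\varepsilon+\varepsilon^2+2\delta/\varepsilon$, which is below $3\varepsilon$ (hence certainly below $4\varepsilon$) for $\varepsilon\leq 1$ once $\delta$ is small enough; the corner cases you flag ($\lambda_G=0$ and $\varepsilon\geq 1/2$) are handled exactly as you indicate.
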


\begin{proof}[Proof of Theorem~\ref{theorem: SCD tensor denting is SCD}]
\parskip=0ex
First, notice that since $a$ is a denting point of $B_X$, we can find for each $n\in\mathbb{N}$ a slice $S(B_X,x^*_n,\alpha_n)$, where $\norm{x^*_n}=1$ and
\begin{equation}\label{equation tensor: condition by using denting}
a\in S(B_X, x^*_n,\alpha_n)\subseteq B\Big(a,\frac{1}{n}\Big).
\end{equation}
On the other hand, we can find a sequence of slices
\begin{equation}\label{equation tensor: condition by using SCD}
\{S(B_Y, y^*_m,\beta_m)\colon m\in\mathbb{N},\; \norm{y^*_m}=1\},
\end{equation}
which is determining for $b$. Let us now define for each $n,m,k\in\mathbb{N}$ the following slices:
\[
S^k_{n,m} = S\Big(B_{X\pten Y}, x^*_n \otimes y^*_m, \frac{1}{k}\Big),
\]
where, as usual,
\[
(x^*_n \otimes y^*_m)(x\otimes y) = x^*_n(x) y^*_m(y)
\]
for every $x\in X$ and $y\in Y$.
Note here that the function $x^*_n\otimes y^*_m$ is bilinear and bounded, therefore $(x^*_n \otimes y^*_m)\in (X\pten Y)^*$.
Our goal is to prove that the countable collection of slices $\{S_{n,m}^k \colon n,m,k\in\mathbb{N}\}$ is determining for the elementary tensor ${a\otimes b}$. To this end, we will use Proposition~\ref{proposition: equivalent conditions of being determining for point} Condition (ii). Let $S =S (B_{X\pten Y}, B, \alpha)$, where $B$ is a norm one bounded bilinear form and  $a\otimes b\in S$. It suffices to find a member of the sequence of slices $\{S_{n,m}^k \colon n,m,k\in\mathbb{N}\}$, which is contained in $S$.

First, since $a\otimes b\in S$, we have
\[
\real B(a,b)> 1-\alpha \implies \exists \gamma>0 \ \text{ such that }\  \real B(a,b)> 1-\alpha + \gamma.
\]
This in turn means that
\[
a \in \{x\in B_X\colon \real B(x,b)>1-\alpha +\gamma\},
\]
where the set above is actually a slice of $B_X$, since it is not empty and
the mapping $x\mapsto \real B(x,b)$ is clearly linear and continuous.

Take $n\in\mathbb{N}$ such that $1/n<\gamma/32$. Then,
\begin{equation}\label{equation tensors: condition from using denting property}
a\in S(B_X,x^*_n,\alpha_n)\subseteq B\Big(a,\frac{1}{n}\Big)\subseteq B\Big(a,\frac{\gamma}{32}\Big)
\end{equation}
by (\ref{equation tensor: condition by using denting}). Using the fact that $\real B(a,b)>1-\alpha+\gamma$, we see that similarly to the last case
\[
b\in\{y\in B_Y\colon \real B(a,y)>1-\alpha +\gamma\},
\]
where the set is again a slice of $B_Y$. Since the sequence of slices in (\ref{equation tensor: condition by using SCD}) determines $b$, we can find $m\in\mathbb{N}$ such that
\begin{equation}\label{equation tensors: condition from using SCD property }
S(B_Y,y^*_m,\beta_m)\subseteq \{y\in B_Y\colon \real B(a,y)>1-\alpha +\gamma\}.
\end{equation}
Consider now the following set:
\[
S^{\otimes} := \{u\otimes v\colon u \in S(B_X,x^*_n,\alpha_n),\; v\in S(B_Y,y^*_m,\beta_m) \}.
\]
We claim that
\begin{equation}\label{equation tensors: S^tensor is contained}
S^{\otimes}\subseteq \Big\{z\in B_{X\pten Y}\colon \real B(z) >1 - \alpha + \frac{31\gamma}{32}\Big\}.
\end{equation}
Indeed, assume that $ u \in S(B_X,x^*_n,\alpha_n)$ and $v\in S(B_Y,y^*_m,\beta_m)$. By (\ref{equation tensors: condition from using SCD property }) we get $\real B(a,v)>1-\alpha+\gamma$ and using (\ref{equation tensors: condition from using denting property}), we obtain
\begin{align*}
\real B(u\otimes v) &= \real B(u,v) = \real B(a,v) - \real B(a-u,v)\\
&> 1-\alpha + \gamma - \norm{B}\norm{v}\norm{a-u}> 1-\alpha + \gamma -\frac{1}{n}\\
&> 1- \alpha + \gamma - \frac{\gamma}{32} > 1-\alpha+ \frac{31\gamma}{32}.
\end{align*}
To proceed further, pick for each $n,m\in\mathbb{N}$ another $k\in\mathbb{N}$ satisfying $1/k<\min\{\alpha_n,\beta_m\}$. Now, we claim that
\begin{equation}\label{equation tensor: S^tensor contains some set}
S\Big(B_X\otimes B_Y, x^*_n\otimes y^*_m,\frac{1}{k}\Big) \subseteq S^{\otimes}.
\end{equation}
Fix $x\otimes y \in S(B_X\otimes B_Y, x^*_n\otimes y^*_m,1/k)$, i.e.
\[
\real (x^*_n\otimes y^*_m)(x\otimes y) = \real (x^*_n(x)y^*_m(y)) > 1 - \frac{1}{k}.
\]
Pick $\theta\in \mathbb K$
with $|\theta|=1$ such that $x^*(\theta x)=\real x^*(\theta x)$;
then $x\otimes y=\theta x\otimes \theta^{-1}y$ and
\[
\real x^*(\theta x)\cdot \real (y^*_m(\theta^{-1} y))=\real (x^*_n(\theta x)y^*_m(\theta^{-1} y)) > 1 - \frac{1}{k}
\]
Since $\theta^{-1}y\in B_Y$, we have
\begin{align*}
  \real x^*(\theta x) \geq \real x^*(\theta x)\cdot \real (y^*_m(\theta^{-1} y))> 1-\frac{1}{k}>1- \alpha_n,
\end{align*}
which means that $\theta x\in S(B_X,x^*_n,\alpha_n)$. Analogously, $\theta^{-1}y\in S(B_Y,y^*_m,\beta_m)$. In conclusion
\[
x\otimes y=\theta x\otimes \theta^{-1}y \in  S^{\otimes}.
\]
Let $k\in\mathbb{N}$ be such that $4/k<\gamma/32$. In order to finish the proof,
we will show that
\begin{equation*}
S\Big(B_{X\pten Y}, x^*_n \otimes y^*_m, \frac{1}{k^2}\Big) =S_{n,m}^{k^2}\subseteq S =S(B_{X\pten Y}, B,\alpha).
\end{equation*}
Indeed, \ref{lemma tensor: Werner composition} gives that
\begin{equation}\label{equation tensor: Werner lemma}
S\Big(B_{X\pten Y}, x^*_n \otimes y^*_m, \frac{1}{k^2}\Big) \subseteq \overline{\conv}\Bigg(S\Big(B_X\otimes B_Y, x^*_n\otimes y^*_m, \frac{1}{k}\Big)\Bigg) + \frac{4}{k}B_{X\pten Y}.
\end{equation}
Pick an element $z\in S(B_{X\pten Y}, x^*_n \otimes y^*_m, 1/k^2)$.
By (\ref{equation tensor: Werner lemma}), we can write
\[
 z = a+\frac{4}{k}h \quad \text{with }\ a\in \overline{\conv}
 \Bigg(S\Big(B_X\otimes B_Y, x^*_n\otimes y^*_m, \frac{1}{k}\Big)\Bigg), \ \
 h\in B_{X\pten Y}.
\]
This means that we can find $\widehat{a}\in \conv(S(B_X\otimes B_Y, x^*_n\otimes y^*_m, 1/k))$ such that $\norm{a-\widehat{a}}<\gamma/32$.
By defining $\widehat{z} = \widehat{a}+(4/k) h$, it is obvious that
\[
\norm{z-\widehat{z}} = \norm{a-\widehat{a}} < \frac{\gamma}{32}.
\]
In addition, by (\ref{equation tensor: S^tensor contains some set}) we have
\[
\conv\bigl(S(B_X\otimes B_Y, x^*_n\otimes y^*_m, 1/k)\bigr) \subseteq \Big\{z\in B_{X\pten Y}\colon \real B(z) >1 - \alpha + \frac{31\gamma}{32}\Big\},
\]
because slices are convex. With this, we obtain
\[
\real B(\widehat{z}) = \real B(\widehat{a}) + \frac{4}{k} \real B(h) > 1-\alpha +\frac{31\gamma}{32} -\frac{4}{k}.
\]
Now, using the above estimations as well as (\ref{equation tensors: S^tensor is contained}) and (\ref{equation tensor: S^tensor contains some set}), we get
\begin{align*}
    \real B(z)&= \real B(z)- \real B(\widehat{z}) + \real B(\widehat{z}) = \real B(\widehat{z})- \real  B(\widehat{z}-z)\\
    &> 1-\alpha + \frac{31\gamma}{32} - \frac{4}{k} - \norm{B}\norm{\widehat{z}-z}
> 1-\alpha + \frac{31\gamma}{32} - \frac{4}{k} - \frac{\gamma}{32} \\
    &= 1-\alpha  + \frac{30\gamma}{32}-\frac{4}{k}> 1-\alpha + \frac{\gamma}{32} - \frac{4}{k}>1-\alpha.\qedhere
\end{align*}
\end{proof}

Let us comment that, as far as we know, it is unknown whether $X\pten Y$ is an SCD space, whenever $X$ and $Y$ are SCD spaces. The following can be considered as partial progress towards this open question.

\begin{corollary}\label{corollary tensor: SCD and dentable unit balls give SCD unit ball}
Let $X$ and $Y$ be Banach spaces such that
$B_X=\overline{\conv}(\dent{B_X})$ and
$\SCD(B_Y)=B_Y$. Then, $\SCD(B_{X\pten Y}) = B_{X\pten Y}$. If additionally $X$ and $Y$ are separable, then $B_{X\pten Y}$ is an SCD set.
\end{corollary}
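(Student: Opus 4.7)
The plan is to combine Theorem~\ref{theorem: SCD tensor denting is SCD} with the closedness and convexity of $\SCD(B_{X\pten Y})$ guaranteed by Lemma~\ref{lemma: properties of SCD(A)}, and to propagate the SCD property from denting elementary tensors to every point of the unit ball via the identity $B_{X\pten Y}=\overline{\conv}(B_X\otimes B_Y)$.

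\textbf{Step 1: elementary tensors with denting first factor.} Fix any $b\in B_Y\setminus\{0\}$. Since $\SCD(B_Y)=B_Y$, we have $b\in \SCD(B_Y)\setminus\{0\}$, so Theorem~\ref{theorem: SCD tensor denting is SCD} gives $a\otimes b\in \SCD(B_{X\pten Y})$ for every $a\in \dent(B_X)$.

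\textbf{Step 2: extend in the first factor.} Fix $b\in B_Y\setminus\{0\}$ and consider the linear map $T_b\colon X\to X\pten Y$, $T_b(x)=x\otimes b$, which is $\|b\|$-Lipschitz. By Step~1 and the convexity of $\SCD(B_{X\pten Y})$ (Lemma~\ref{lemma: properties of SCD(A)}), every tensor $a\otimes b$ with $a\in \conv(\dent(B_X))$ lies in $\SCD(B_{X\pten Y})$. Since $\SCD(B_{X\pten Y})$ is relatively closed in $B_{X\pten Y}$ (Lemma~\ref{lemma: properties of SCD(A)}) and $B_X=\overline{\conv}(\dent(B_X))$, continuity of $T_b$ yields
\[
B_X\otimes b=T_b(B_X)\subseteq \SCD(B_{X\pten Y}).
\]

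\textbf{Step 3: handle $b=0$.} For any fixed $a\in B_X$ and any $b\in B_Y\setminus\{0\}$, the points $a\otimes (b/n)$ belong to $\SCD(B_{X\pten Y})$ by Step~2 and converge to $0$. Closedness gives $0\in \SCD(B_{X\pten Y})$, hence $B_X\otimes B_Y\subseteq \SCD(B_{X\pten Y})$.

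\textbf{Step 4: conclude.} Using convexity and closedness of $\SCD(B_{X\pten Y})$ once more,
\[
B_{X\pten Y}=\overline{\conv}(B_X\otimes B_Y)\subseteq \SCD(B_{X\pten Y}),
\]
so equality holds.

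\textbf{Step 5: separable case.} If $X$ and $Y$ are separable, then so is $X\pten Y$, whence $B_{X\pten Y}$ admits a countable dense subset consisting entirely of SCD points. Lemma~\ref{lemma: connection between SCD sets and SCD points}.(2) (or (3)) then yields that $B_{X\pten Y}$ is an SCD set.

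No step is really hard: the only mildly delicate point is checking that the passage from denting tensors to arbitrary $a\otimes b$ is legitimate, but this is pure bookkeeping once one notices that $\SCD$ is preserved under convex combinations and norm limits inside $B_{X\pten Y}$ and that $B_X=\overline{\conv}(\dent(B_X))$ is assumed.
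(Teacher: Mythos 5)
Your proposal is correct and follows essentially the same route as the paper: apply Theorem~\ref{theorem: SCD tensor denting is SCD} to elementary tensors $a\otimes b$ with $a\in\dent(B_X)$ and $b\neq 0$, handle $0$ separately, and then use the convexity and relative closedness of $\SCD(B_{X\pten Y})$ from Lemma~\ref{lemma: properties of SCD(A)} together with $B_{X\pten Y}=\overline{\conv}(B_X\otimes B_Y)=\overline{\conv}(\dent(B_X)\otimes B_Y)$, finishing the separable case with Lemma~\ref{lemma: connection between SCD sets and SCD points}. The only cosmetic difference is that you obtain $0\in\SCD(B_{X\pten Y})$ by a limit argument where the paper invokes Corollary~\ref{remark: 0 being SCD}; both are valid.
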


\begin{proof}
\parskip=0ex
    Notice that using elementary properties of projective tensor products together with the assumption, we can write
    \begin{align*}
        B_{X\pten Y} &= \overline{\conv}(B_X\otimes B_Y) = \overline{\conv}\big(\overline{\conv}(\dent(B_X))\otimes \SCD(B_Y)\big)\\
        &= \overline{\conv}(\dent(B_X)\otimes \SCD(B_Y)).
    \end{align*}
    By Theorem \ref{theorem: SCD tensor denting is SCD},
    \begin{equation}\label{eq tensor for corollary: implication of main theorem}
      \dent(B_X)\otimes (\SCD(B_Y)\setminus\{0\}) \subseteq \SCD(B_ {X\pten Y}).
    \end{equation}
    This in turn implies that
    \begin{equation}\label{eq tensor for corollary: implication of main theorem improvement}
    \dent(B_X)\otimes \SCD(B_Y)\subseteq \SCD(B_{X\pten Y}).
    \end{equation}
    Indeed, pick $x\otimes y \in \dent(B_X)\otimes \SCD(B_Y)$, where $y\neq 0$
    (such elements clearly do exist).
    Then by (\ref{eq tensor for corollary: implication of main theorem})
    we directly obtain that $x\otimes y\in \SCD(B_ {X\pten Y})$.
    Using Corollary \ref{remark: 0 being SCD}, we get $0\in\SCD(B_{X\pten Y})$.
    Therefore, picking $x\otimes 0 \in \dent(B_X)\otimes \SCD(B_Y)$, we have
    \[
    x\otimes 0 = 0\in\SCD(B_{X\pten Y})
    \]
    and consequently (\ref{eq tensor for corollary: implication of main theorem improvement}) holds.

    As the set of SCD points of the unit ball is closed and convex by
    Lemma~\ref{lemma: properties of SCD(A)}, we obtain
    \[
    \overline{\conv}(\dent(B_X)\otimes \SCD(B_Y)) \subseteq \SCD(B_{X\pten Y}).
    \]
    In conclusion
    \[
    B_{X\pten Y} =\overline{\conv}(\dent(B_X)\otimes \SCD(B_Y)) \subseteq \SCD(B_{X\pten Y}) \subseteq B_{X\pten Y}.
    \]
    If additionally $X$ and $X$ are separable, so is the set $B_{X\pten Y}$, hence by Lemma~\ref{lemma: connection between SCD sets and SCD points} we have that $B_{X\pten Y}$ is an SCD set.
\end{proof}

The next result gives another sufficient condition for the elementary tensor of two SCD points to be also an SCD point.

\begin{theorem}\label{theorem: a SCD b preserved extreme then a tensor b SCD}
Let $X$ and $Y$ be Banach spaces, and let $a\in \SCD(B_X)$ and $b\in \SCD(B_Y)$. Assume that
\begin{enumerate}
    \item every operator $T\colon X\rightarrow Y^*$ is compact;
    \item $b$ is a preserved extreme point.
\end{enumerate}
Then, $a\otimes b\in \SCD(B_{X\pten Y})$.
\end{theorem}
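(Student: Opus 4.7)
The plan is to exhibit an explicit countable determining family of slices of $B_{X\pten Y}$ for $a\otimes b$. Let $\{S(B_X,x_n^*,\alpha_n)\colon n\in\N\}$ (with $\|x_n^*\|=1$) and $\{S(B_Y,y_m^*,\beta_m)\colon m\in\N\}$ (with $\|y_m^*\|=1$) be determining sequences of slices for $a$ and $b$, respectively, as granted by the SCD hypotheses. I will verify, using Proposition~\ref{proposition: equivalent conditions of being determining for point}(ii), that the family
\[
\mathcal F:=\bigl\{S\bigl(B_{X\pten Y},\,x_n^*\otimes y_m^*,\,1/k\bigr)\colon n,m,k\in\N\bigr\}
\]
is determining for $a\otimes b$. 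Fix an arbitrary slice $S=S(B_{X\pten Y},B,\alpha)$ containing $a\otimes b$, where $B\in\mathcal B(X\times Y)=(X\pten Y)^*$ has norm one, and identify it with the operator $T=T_B\colon X\to Y^*$, which is compact by hypothesis.

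The first observation I would record is that, since $b$ is a preserved extreme point, the slices of $B_Y$ containing $b$ form a basis for the weak topology of $B_Y$ at $b$ (this is \cite{LLT1988}, recalled in the introduction); combined with the fact that $\{S(B_Y,y_m^*,\beta_m)\}$ is determining for $b$ and Proposition~\ref{proposition: equivalent conditions of being determining for point}(ii), this sequence is itself a countable weak neighborhood basis at $b$. Second, by Schauder's theorem the transpose $T^{\mathrm{tr}}\colon Y\to X^*$ defined by $T^{\mathrm{tr}}(y)(x):=B(x,y)$ is also compact, hence weak-to-norm continuous on the bounded set $B_Y$. Fix $\delta>0$ with $\real B(a,b)>1-\alpha+8\delta$. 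The two facts together yield $m\in\N$ such that $\|T^{\mathrm{tr}}(y)-T^{\mathrm{tr}}(b)\|<\delta$ for every $y\in S(B_Y,y_m^*,\beta_m)$. Since $T^{\mathrm{tr}}(b)\in X^*$ has $\|T^{\mathrm{tr}}(b)\|\leq 1$ and $\real T^{\mathrm{tr}}(b)(a)=\real B(a,b)>1-\alpha+8\delta$, the SCD hypothesis on $a$ (via Proposition~\ref{proposition: equivalent conditions of being determining for point}(ii) applied to the slice of $B_X$ defined by $T^{\mathrm{tr}}(b)$) provides $n\in\N$ with $\real T^{\mathrm{tr}}(b)(x)>\real T^{\mathrm{tr}}(b)(a)-\delta$ on $S(B_X,x_n^*,\alpha_n)$. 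Writing $B(x,y)=T^{\mathrm{tr}}(b)(x)+(T^{\mathrm{tr}}(y)-T^{\mathrm{tr}}(b))(x)$, a direct estimate then yields $\real B(x,y)>1-\alpha+6\delta$ for every $(x,y)\in S(B_X,x_n^*,\alpha_n)\times S(B_Y,y_m^*,\beta_m)$.

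To finish, one must lift this estimate from elementary tensors to the whole slice $S(B_{X\pten Y},x_n^*\otimes y_m^*,\epsilon^2)$. Here I would mimic the closing argument of the proof of Theorem~\ref{theorem: SCD tensor denting is SCD}: apply Werner's Lemma~\ref{lemma tensor: Werner composition} to the norm-one bilinear form $x_n^*\otimes y_m^*$, and then use the same $\theta$-rotation trick to show that every $x\otimes y\in B_X\otimes B_Y$ with $\real\bigl(x_n^*(x)\,y_m^*(y)\bigr)>1-\epsilon$ may be written as $\theta x\otimes\theta^{-1}y$ with $\theta x\in S(B_X,x_n^*,\epsilon)\subseteq S(B_X,x_n^*,\alpha_n)$ and $\theta^{-1}y\in S(B_Y,y_m^*,\epsilon)\subseteq S(B_Y,y_m^*,\beta_m)$, provided $\epsilon<\min\{\alpha_n,\beta_m\}$. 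Choosing also $\epsilon\leq\delta$ makes the $4\epsilon$-error term from Werner's Lemma harmless, and one obtains $S(B_{X\pten Y},x_n^*\otimes y_m^*,\epsilon^2)\subseteq S$; taking $k=\lceil 1/\epsilon^2\rceil$ concludes.

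The main obstacle is that being an SCD point at $a$ does \emph{not} a priori provide a sequence of slices of $B_X$ forming a weak neighborhood basis of $a$, so the weak-to-norm continuity of compact operators cannot be used on the $X$-side directly. The argument is therefore delicately balanced so that the two compactness-type inputs, namely, weak-to-norm continuity of $T^{\mathrm{tr}}$ on $B_Y$ and preserved-extremeness of $b$ upgrading its SCD sequence to a weak neighborhood basis, together absorb the topological $Y$-side of the estimate, while the $X$-side needs only the bare SCD property of $a$ applied to the single functional $T^{\mathrm{tr}}(b)\in X^*$ produced by $T$ and $b$.
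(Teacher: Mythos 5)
Your proposal is correct and follows the paper's proof in its overall skeleton: the same candidate determining family $\{S(B_{X\pten Y},x_n^*\otimes y_m^*,1/k)\}$, the same reduction via Proposition~\ref{proposition: equivalent conditions of being determining for point}(ii), and the same endgame via Lemma~\ref{lemma tensor: Werner composition} and the $\theta$-rotation trick borrowed from Theorem~\ref{theorem: SCD tensor denting is SCD}. The one genuine divergence is the middle step producing the pair $(n,m)$. The paper first chooses $n$ so that $S(B_X,x_n^*,\alpha_n)\subseteq\{x\colon \real T(x)(b)>1-\alpha+\eta\}$, then uses compactness of $T$ to cover $T(S(B_X,x_n^*,\alpha_n))$ by finitely many $\eta/16$-balls centred at points $T(x_i)$, and only then builds the weakly open set $W=\bigcap_i\{y\colon\real T(x_i)(y)>1-\alpha+\eta\}$ containing $b$, so that $m$ depends on $n$. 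You instead pass to the adjoint $T^{\mathrm{tr}}=T^*|_Y$, which is compact by Schauder and hence weak-to-norm continuous on $B_Y$, choose $m$ so that $T^{\mathrm{tr}}$ is $\delta$-constant on $S(B_Y,y_m^*,\beta_m)$, and choose $n$ independently by applying the SCD property of $a$ to the single functional $T^{\mathrm{tr}}(b)$. These are dual ways of exploiting the same compactness hypothesis, and yours arguably makes the logical roles of the three hypotheses more transparent (in particular, the two choices are decoupled). Two small points: what the combination of preserved-extremeness with the determining sequence gives you is a countable local $\pi$-base at $b$, not a neighbourhood basis (the slices $S(B_Y,y_m^*,\beta_m)$ need not contain $b$) --- but the way you use it, namely producing $m$ with $S(B_Y,y_m^*,\beta_m)$ inside a prescribed weak neighbourhood of $b$, only needs the $\pi$-base property, so this is a harmless slip of terminology; and, as in the paper, one should note that $T^{\mathrm{tr}}(b)\neq 0$ (which follows after normalising $\alpha\leq 1$) so that the set $\{x\in B_X\colon\real T^{\mathrm{tr}}(b)(x)>\real T^{\mathrm{tr}}(b)(a)-\delta\}$ is indeed a slice.
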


\begin{proof}
\parskip=0ex
Let $S_n=S(B_X, x^*_n,\alpha_n)$ and $T_m=S(B_Y, y^*_m,\beta_m)$, where $\|x^*_n\|=\|y^*_m\|=1$ and $\alpha_n,\beta_m>0$, be the determining sequences of slices for $a$ and $b$, respectively.

 Let us now define for each $n,m,k\in\mathbb{N}$ the following slices:
\[
S^k_{n,m} = S\Big(B_{X\pten Y}, x^*_n \otimes y^*_m, \frac{1}{k}\Big),
\]
where
\[
(x^*_n \otimes y^*_m)(x\otimes y) = x^*_n(x) y^*_m(y)
\]
for every $x\in X$ and $y\in Y$.
Note here that the function $x^*_n\otimes y^*_m$ is bilinear and bounded, therefore $(x^*_n \otimes y^*_m)\in (X\pten Y)^*$.
Our goal is to prove that the countable collection of slices $\{S_{n,m}^k \colon n,m,k\in\mathbb{N}\}$ is determining for the elementary tensor $a\otimes b$.

Let $S=S(B_{X\pten Y}, T, \alpha)$ be a slice containing the element $a\otimes b$, where $T\in \mathcal{L}(X,Y^*)$ and $\alpha>0$. Find $\eta>0$ such that
\[
\real T(a)(b)>1-\alpha+\eta.
\]
Since $a\in \{x\in B_X\colon \real T(x)(b)>1-\alpha+\eta\}$ and $a\in\SCD(B_X)$, then there exists $n\in \mathbb N$ such that
\[
S(B_X, x^*_n,\alpha_n)\subseteq \{x\in B_X\colon \real T(x)(b)>1-\alpha+\eta\}.
\]
By our assumption $T$ is compact, so there are $x_1,\dots, x_p\in S(B_X, x^*_n,\alpha_n)$ such that
\[
T(S(B_X, x^*_n,\alpha_n))\subseteq \bigcup_{i=1}^p B\left(T(x_i), \frac{\eta}{16}\right).
\]
Since, $x_i\in \{x\in B_X\colon \real T(x)(b)>1-\alpha+\eta\}$ for every $1\leq i\leq p$, then
\[
b\in \bigcap_{i=1}^p\{y\in B_Y\colon \real T(x_i)(y)>1-\alpha+\eta\}=:W.
\]
Note that $W$ is a relatively weakly open subset of $B_Y$ containing $b$, which is a preserved extreme point, hence we can find a slice $S$ of $B_Y$ such that $b\in S\subseteq W$ \cite{LLT1988}. Since $b\in \SCD(B_Y)$, we can find $m\in \mathbb N$ such that
\[
S(B_Y, y^*_m,\beta_m)\subseteq S\subseteq W.
\]
Consider now the following set
\[
S^{\otimes} := \{u\otimes v\colon u \in S(B_X,x^*_n,\alpha_n),\; v\in S(B_Y,y^*_m,\beta_m) \}.
\]
Then, by arguing as in the proof of Theorem~\ref{theorem: SCD tensor denting is SCD}, one has that
\begin{equation}\label{equation:last-theorem4.4}
S^{\otimes}\subseteq \Big\{z\in B_{X\pten Y}\colon \real B(z) >1 - \alpha + \frac{15\eta}{16}\Big\}.
\end{equation}
Finally, the proof follows by repeating the arguments in the last part of the proof of Theorem~\ref{theorem: SCD tensor denting is SCD}, using now \eqref{equation:last-theorem4.4} instead of \eqref{equation tensors: condition from using SCD property }.
\end{proof}

\section{SCD points in Lipschitz-free spaces}\label{sec: SCD points in Lipschitz-free spaces}

Let $M$ be a metric space with metric $d$ and a fixed point 0. We denote by $\operatorname{Lip}_0(M)$ the \emph{real} Banach space of all Lipschitz functions $f\colon M \rightarrow \mathbb{R}$ with $f(0)=0$ equipped with the norm
$$
\|f\|:=\sup \left\{\frac{|f(x)-f(y)|}{d(x, y)}\colon x, y \in M,\ x \neq y\right\} .
$$
Let $\delta\colon M \rightarrow \operatorname{Lip}_0(M)^*$ be the canonical isometric embedding of $M$ into $\operatorname{Lip}_0(M)^*$ given by $x \mapsto \delta_x$, where $\delta_x(f)=f(x)$. The norm closed linear span of $\delta(M)$ in $\operatorname{Lip}_0(M)^*$ is called the \emph{Lipschitz-free space} over $M$ and is denoted by $\mathcal{F}(M)$ (see \cite{godefroySurvey2015} and \cite{weaver2018} for the background). It is well known that $\mathcal{F}(M)^*=\operatorname{Lip}_0(M)$. An element in $\mathcal{F}(M)$ of the form
\[
m_{x,y}:=\frac{\delta_x-\delta_y}{d(x,y)}
\]
for $x,y\in M$ with $x\neq y$ is called a \emph{molecule}.

The main result of the section is a characterization of SCD point of the unit ball of a Lipschitz-free space for \emph{complete} metric spaces.

\begin{theorem}\label{theo:freecomplete}
Let $M$ be a complete metric space and let $\mu\in S_{\mathcal F(M)}$. Then, the following are equivalent:
\begin{enumerate}
    \item[\rm{(i)}] $\mu\in\SCD(B_{\mathcal F(M)})$;
    \item[\rm{(ii)}] $\mu\in \overline{\conv}\bigl(\dent(B_{\mathcal F(M)})\bigr)$.
\end{enumerate}
In particular, if $M$ is separable, then $B_{\mathcal F(M)}$ is an SCD set if, and only if, $B_{\mathcal F(M)}=\overline{\conv}\bigl(\dent(B_{\mathcal F(M)})\bigr)$.
\end{theorem}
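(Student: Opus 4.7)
The implication (ii) $\Rightarrow$ (i) is immediate: Example~\ref{example:dentingpoints} gives $\dent(B_{\mathcal F(M)})\subseteq \SCD(B_{\mathcal F(M)})$, and Lemma~\ref{lemma: properties of SCD(A)} says $\SCD(B_{\mathcal F(M)})$ is closed and convex, so it contains $\overline{\conv}(\dent(B_{\mathcal F(M)}))$.

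For (i) $\Rightarrow$ (ii) I would argue by contrapositive. The plan is to exploit a structural fact specific to Lipschitz-free spaces over complete metric spaces (available in the recent literature on Daugavet points in $\mathcal F(M)$, e.g.\ in the line of Veeorg \cite{veeorg2021characterizations} and Jung--Rueda Zoca): if $\mu \in S_{\mathcal F(M)} \setminus \overline{\conv}(\dent(B_{\mathcal F(M)}))$ and $M$ is complete, then $\mu$ is a \emph{super ccs Daugavet point}, meaning that for every convex combination of slices $C$ of $B_{\mathcal F(M)}$ containing $\mu$ and every $\varepsilon > 0$ there exists $x \in C$ with $\|x + \mu\| < \varepsilon$. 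Granting this, for any sequence $\{C_n\}$ of convex combinations of slices of $B_{\mathcal F(M)}$ containing $\mu$ we pick $x_n \in C_n$ with $\|x_n + \mu\| < 1/n$; then $x_n \to -\mu$, so $\overline{\conv}(\{x_n\colon n\in\mathbb N\}) = \{-\mu\}$ does not contain $\mu$, and by Proposition~\ref{proposition: S_n iff W_n iff C_n} this forces $\mu\notin\SCD(B_{\mathcal F(M)})$.

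The main obstacle is this super ccs Daugavet upgrade, since in general Banach spaces a Daugavet point need not enjoy this stronger property (only the existence of elements at distance $>2-\varepsilon$ from $\mu$ is guaranteed, which by itself does not force $\|x+\mu\|$ to be small). In $\mathcal F(M)$ over complete $M$ this upgrade should be available via the molecular structure of $B_{\mathcal F(M)}$: denting points are exactly the strongly exposed molecules $m_{p,q}$ for which the metric segment $[p,q]$ is trivial, and $\mu$ failing to lie in $\overline{\conv}(\dent(B_{\mathcal F(M)}))$ can be traced to a Lipschitz function $f$ witnessing the separation whose support carries ``spike'' behaviour allowing the construction, inside any prescribed convex combination of slices, of molecules close to $-\mu$.

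For the ``in particular'' statement, assume first that $B_{\mathcal F(M)}$ is an SCD set; by Lemma~\ref{lemma: connection between SCD sets and SCD points}(1) every point is an SCD point, so every $\mu \in S_{\mathcal F(M)}$ lies in $\overline{\conv}(\dent(B_{\mathcal F(M)}))$ by the main equivalence, and taking closed convex hulls gives $B_{\mathcal F(M)} = \overline{\conv}(S_{\mathcal F(M)}) \subseteq \overline{\conv}(\dent(B_{\mathcal F(M)}))$. Conversely, if $B_{\mathcal F(M)} = \overline{\conv}(\dent(B_{\mathcal F(M)}))$, then $\SCD(B_{\mathcal F(M)})$ is closed, convex and contains all denting points (Example~\ref{example:dentingpoints}, Lemma~\ref{lemma: properties of SCD(A)}), so it equals $B_{\mathcal F(M)}$; separability combined with Lemma~\ref{lemma: connection between SCD sets and SCD points}(3) then yields that $B_{\mathcal F(M)}$ is an SCD set.
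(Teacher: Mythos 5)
Your (ii)$\Rightarrow$(i) direction and the ``in particular'' statement are handled exactly as in the paper and are fine. The problem is the main direction (i)$\Rightarrow$(ii): the entire argument rests on the ``super ccs Daugavet upgrade'' --- that every $\mu\in S_{\mathcal F(M)}\setminus\overline{\conv}\bigl(\dent(B_{\mathcal F(M)})\bigr)$ admits, in each convex combination of slices containing it, points $x$ with $\Vert x+\mu\Vert<\varepsilon$ --- and you leave this unproven, acknowledging it as ``the main obstacle.'' Unfortunately it is not just unproven but false. Take $M$ to be the wedge (at the base point) of a two-point space $\{0,p\}$ with $d(0,p)=1$ and the interval $[0,1]$; then $M$ is complete and $\mathcal F(M)\cong\mathbb R\oplus_1 L_1[0,1]$, whose denting points are only $(\pm1,0)$, so $\overline{\conv}\bigl(\dent(B_{\mathcal F(M)})\bigr)=[-1,1]\times\{0\}$. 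The point $\mu=(\tfrac12,\tfrac12 g)$ with $g\in S_{L_1}$ lies outside this set, yet for $g^*\in S_{L_\infty}$ with $g^*(g)=1$ the slice $S=S\bigl(B_{\mathcal F(M)},(1,g^*),\alpha\bigr)$ contains $\mu$ while every $x\in S$ satisfies $\Vert x+\mu\Vert\geq\real(1,g^*)(x+\mu)>2-\alpha$; so $\mu$ is nowhere near a ``super ccs Daugavet point.'' (The theorem itself is safe here because $\mu$ is also not SCD, by Proposition~\ref{proposition: 1-sum equivalent condition} and the Daugavet property of $L_1[0,1]$; it is only your proposed route that breaks.) A smaller slip: from $\Vert x_n+\mu\Vert<1/n$ you cannot conclude $\overline{\conv}(\{x_n\})=\{-\mu\}$, though the conclusion $\mu\notin\overline{\conv}(\{x_n\})$ does survive since all $x_n$ lie in $-\mu+B_{\mathcal F(M)}$.

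For comparison, the paper's proof of (i)$\Rightarrow$(ii) is direct rather than contrapositive: starting from a determining sequence of slices $S(B_{\mathcal F(M)},f_n,\alpha_n)$ for $\mu$, Lemma~\ref{lemma:freenonlocalsequence} (the technical heart, proved via an asymptotic $\ell_1$-type construction with molecules $m_{u,v}$, $d(u,v)\to0$) shows one may discard all slices whose defining functions $f_n$ are local; for the remaining non-local $f_n$ on a complete $M$, a result of Veeorg produces a denting point $x_n\in S_n$, and the determining property immediately gives $\mu\in\overline{\conv}(\{x_n\})\subseteq\overline{\conv}\bigl(\dent(B_{\mathcal F(M)})\bigr)$. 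If you want to salvage a contrapositive strategy, you would need a much weaker (and true) dichotomy than the one you posited, and at that point the direct argument is both shorter and already available.
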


We need a preliminary result which is interesting by itself. Recall that a Lipschitz function $f\in \Lip_0(M)$ is said to be \emph{local} if, for every $\varepsilon>0$, there exists $u,v\in M$ with $0<d(u,v)<\varepsilon$ and $\frac{f(u)-f(v)}{d(u,v)}>\Vert f\Vert-\varepsilon$. In short, a local Lipschitz function is a function whose Lipschitz norm can be approximated by pairs of points which are arbitrarily close. The next lemma shows that only non-local functions are needed to construct a sequence of determining slices for a norm-one element of the unit ball of a Lipschitz-free space.

\begin{lemma}\label{lemma:freenonlocalsequence}
Let $M$ be a metric space and $\mu\in \SCD(B_{\mathcal F(M)})\cap S_{\mathcal{F}(M)}$. Assume that $S_n=S(B_{\mathcal F(M)},f_n,\alpha_n)$ is a determining sequence for $\mu$. Set
$$I:=\{n\in\mathbb N\colon f_n\mbox{ is not local }\}.$$
Then, $\{S_n\colon n\in I\}$ is determining for $\mu$.
\end{lemma}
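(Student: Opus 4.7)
The plan is to verify that $\{S_n\colon n\in I\}$ is determining for $\mu$ by checking the condition of Proposition~\ref{proposition: equivalent conditions of being determining for point}(ii): given any slice $S=S(B_{\mathcal F(M)},g,\beta)$ of $B_{\mathcal F(M)}$ containing $\mu$ (normalize $\|g\|=1$, so $g(\mu)>1-\beta$), I need to produce some $m\in I$ with $S_m\subseteq S$. The argument I envision is by contradiction: if no such $m$ exists, then every index $n$ for which $S_n\subseteq S$ must satisfy $n\notin I$, i.e.\ $f_n$ is local. The plan is then to construct a strictly smaller slice $S'\subseteq S$ still containing $\mu$ whose defining functional $h\in\operatorname{Lip}_0(M)$ is \emph{non-local}, apply the original determining hypothesis to $S'$, and derive the contradiction.

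The key observation making this work is that a non-local $h\in\operatorname{Lip}_0(M)$ with non-locality scale $\eta>0$ satisfies $(h(u)-h(v))/d(u,v)\leq\|h\|-\eta$ whenever $0<d(u,v)<\eta$, so the slice $S'=S(B_{\mathcal F(M)},h,\gamma)$ taken with $\gamma<\eta$ cannot contain any molecule $m_{u,v}$ with $d(u,v)<\eta$. On the other hand, if $f_n$ is local, unwrapping the definition with $\varepsilon<\min(\alpha_n,\eta)$ produces a molecule $m_{u,v}\in S_n$ with $0<d(u,v)<\eta$; so no such $S_n$ can sit inside $S'$. Combined with the determining hypothesis, which provides some $S_n\subseteq S'\subseteq S$, this forces $f_n$ to be non-local, that is $n\in I$, contradicting the assumption.

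The main technical content is therefore the construction of the non-local sub-slice $S'\subseteq S$ containing $\mu$. If $g$ itself is non-local this is immediate by shrinking the width below the non-locality scale of $g$ while keeping $\mu$ in the slice (which is possible because $g(\mu)>1-\beta$). Otherwise, one would perturb $g$ to a non-local $h=g+\varepsilon\phi$ using a non-local Lipschitz function $\phi$ available from the structure of $M$, and fine-tune $\gamma$ using the slack $g(\mu)-(1-\beta)>0$ and the norm-one property of $\mu$ to guarantee simultaneously that $\mu\in S(B_{\mathcal F(M)},h,\gamma)$ and that $S(B_{\mathcal F(M)},h,\gamma)\subseteq S(B_{\mathcal F(M)},g,\beta)$. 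The principal technical obstacle I foresee is the quantitative slice containment across the distinct defining functionals $g$ and $h$, which would be handled by a direct inequality estimate carefully balancing $\varepsilon$ and $\gamma$ against $\beta-(1-g(\mu))$.
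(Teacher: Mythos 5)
Your reduction to Proposition~\ref{proposition: equivalent conditions of being determining for point}(ii) and your final step are sound: it is true that a non-local $h$ with $\|h\|=1$ admits $\eta>0$ so that $h(m_{u,v})\leq 1-\eta$ whenever $0<d(u,v)<\eta$, and that a local $f_n$ forces $S_n$ to contain molecules $m_{u,v}$ with $d(u,v)$ arbitrarily small; so if you could produce a slice $S'\subseteq S$ with $\mu\in S'$, defined by a non-local functional and of width below its non-locality scale, the conclusion would follow. The genuine gap is precisely the production of $S'$, and it is not a technicality --- it is the whole content of the lemma. Two concrete problems. First, even in your ``immediate'' case where $g$ is non-local with scale $\eta$, you must take the width $\gamma\leq\eta$ to exclude small molecules, but $\mu\in S(B_{\mathcal F(M)},g,\gamma)$ requires $\gamma>1-g(\mu)$, and nothing prevents $1-g(\mu)>\eta$; the slack $\beta-(1-g(\mu))$ does not help because $\eta$ is dictated by $g$, not by $\beta$. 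Second, the perturbation $h=g+\varepsilon\phi$ is unsubstantiated: non-local functions need not exist at all (on a length space \emph{every} Lipschitz function is local --- there your argument, which up to that point never uses the hypothesis $\mu\in\SCD(B_{\mathcal F(M)})$, has nothing to work with), and even when they exist, a small non-local perturbation of a local $g$ need not be non-local; worse, the difference quotients of $h$ at small scales stay within $2\varepsilon\|\phi\|$ of $\|h\|$, so to exclude small molecules you need $\gamma\lesssim\varepsilon$, while to keep $\mu$ inside you need $\gamma>\|h\|-h(\mu)$, and these two constraints pull in opposite directions with no mechanism to reconcile them. In effect you are assuming that every slice containing $\mu$ has a non-local sub-slice containing $\mu$, a statement at least as strong as the lemma.

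The paper's proof avoids this entirely and runs in the opposite direction: assuming $\{S_n\colon n\in I\}$ is not determining, it picks $x_n\in S_n$ ($n\in I$) separated from $\mu$ by a Hahn--Banach functional, and then, inside each \emph{local} slice $S_{k}$ ($k\in J=\mathbb N\setminus I$), uses locality to choose molecules $m_{u_j,v_j}\in S_k$ with $d(u_j,v_j)\to 0$. By \cite[Theorem~2.6]{jung2021daugavet} such molecules satisfy $\|\nu+m_{u_j,v_j}\|\to 1+\|\nu\|$ for every $\nu$, so an induction over finite-dimensional spans (with an $\varepsilon$-net argument) produces $x_k\in S_k$, $k\in J$, that together with $\mu$ form an almost isometric $\ell_1$-basis. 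Any convex combination of all the $x_n$ approximating $\mu$ must then carry most of its mass on $J$ (by the separation) and simultaneously almost none (by the $\ell_1$-estimate), a contradiction. If you want to salvage your approach you would need to prove the non-local sub-slice claim, and I do not see how to do that without essentially reproving the lemma.
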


\begin{proof}
\parskip=0ex
Assume, by contradiction, that $\{S_n\colon n\in I\}$ is not determining for $\mu$. Consequently, for every $n\in I$ there exist $x_n\in S_n$ satisfying that $\mu\notin\overline{\conv}(\{x_n\colon n\in I\})$. By the Hahn-Banach theorem, there exists $f\in S_{\Lip_0(M)}$ and $\alpha>0$ such that
$$
f(\mu)>\alpha>\sup\bigl\{f(z)\colon z\in \overline{\conv}\{x_n\colon  n\in I\}\bigr\}.
$$
Furthermore, we can  find $0<\beta<\alpha$ satisfying
$$
f(\mu)>\alpha>\beta>f(x_n)\ \  \forall n\in I.
$$
Select $\varepsilon, \eta>0$ small enough so that
\begin{equation}\label{eq:epsetafree}
(1-\beta)(\eta+2\varepsilon) + \eta <\alpha-\beta.
\end{equation}
Now, set $J=\mathbb N\setminus I=\{n\in\mathbb N\colon  f_n\mbox{ is local }\}$ and write $J=\{k_n\colon n\in\mathbb N\}$ (admitting that $k_n$ may be eventually constant if $J$ is finite). Choose a sequence $(\varepsilon_n)$ of positive real numbers such that $1-\varepsilon<\prod_{n=1}^\infty (1-\varepsilon_n)$. Our aim is to construct, by induction, a sequence $x_{k_n}\in S_{k_n}$ with the property that
\begin{equation}\label{eq:condixknnolocal}
\left\Vert\mu+\sum_{i=1}^n \lambda_i x_{k_i} \right\Vert>\prod_{i=1}^n (1-\varepsilon_n)\Big(1+\sum_{i=1}^n \vert\lambda_i\vert\Big)
\end{equation}
for every $n\in\mathbb N$ and $\lambda_1,\ldots, \lambda_n\in\mathbb R$. Let us construct $x_{k_1}$. Since $f_{k_1}$ is local, we can find a sequence of points $u_n, v_n\in M$ with $0<d(u_j,v_j)\rightarrow 0$ such that $f_{k_1}(m_{u_j,v_j})=\frac{f_{k_1}(u_j)-f_{k_1}(v_j)}{d(u_j,v_j)}>1-\alpha_{k_1}$ or, in other words, that $m_{u_j,v_j}\in S_{k_1}$ holds for every $j\in\mathbb N$. Since $d(u_j,v_j)\rightarrow 0$, \cite[Theorem 2.6]{jung2021daugavet} implies that
$$\Vert \nu +m_{u_j,v_j}\Vert\rightarrow 1+\Vert \nu\Vert$$
holds for every $\nu\in \mathcal F(M)$. Consequently, we can find $j\in\mathbb{N}$ big enough so that $x_{k_1}:=m_{u_j,v_j}$ satisfies
$$
\Vert \mu\pm x_{k_1}\Vert>2-\frac{\varepsilon_1}{2}.
$$
Notice that an application of \cite[Lemma~2.2]{kadets2020lemma} secures us that
\[
\Vert \mu +\lambda x_{k_1}\Vert>(1-\varepsilon_1)( 1+\vert \lambda \vert)
\]
holds for every $\lambda\in\mathbb R$, hence equation (\ref{eq:condixknnolocal}) for $k_1$ is satisfied.


Now assume the inductive step that $x_{k_1},\ldots x_{k_n}$ has been constructed with the desired property, and let us construct $x_{k_{n+1}}$. In order to do so, let $Y:=\Span\{\mu, x_{k_1},\ldots, x_{k_n}\}$, which is a finite-dimensional subspace of $\mathcal F(M)$. Since $S_Y$ is compact as $Y$ is finite-dimensional, we can select a finite set $F\subseteq S_Y$ which is an $\frac{\varepsilon_{n+1}}{2}$-net for $S_Y$. Once again the condition that $f_{k_{n+1}}$ is local allows us to guarantee the existence of a sequence $m_{u_j,v_j}\in S_{k_{n+1}}$ such that $d(u_j,v_j)\rightarrow 0$. Since
$$
\Vert \nu+m_{u_j,v_j}\Vert\rightarrow 2
$$
for every $\nu\in F$, again by \cite[Theorem 2.6]{jung2021daugavet}, we can find $j\in\mathbb{N}$ large enough so that, if we select $x_{k_{n+1}}:=m_{u_j,v_j}$, we have $\Vert \nu+x_{k_{n+1}}\Vert>2-\frac{\varepsilon_{n+1}}{2}$ for every $\nu\in F$ (since $F$ is finite). As $F$ is an $\varepsilon_{n+1}/2$-net, it follows that the inequality
$$
\Vert \nu+x_{k_{n+1}}\Vert>2-\varepsilon_{n+1}
$$
holds for every $\nu\in S_Y$. From here it can be proved that
$$\Vert \nu+\lambda x_{k_{n+1}}\Vert>(1-\varepsilon_{n+1})(\Vert \nu\Vert+\vert \lambda\vert)$$
holds for every $\nu\in Y$ and every $\lambda\in\mathbb R$. Let us prove that $x_{k_1},\ldots, x_{k_{n+1}}$ satisfy the desired condition. In order to do so, select $\lambda_1,\ldots, \lambda_{n+1}\in\mathbb R$. Observe that, since $\mu+\sum_{i=1}^n \lambda_i x_{k_i}\in Y$, we obtain
$$
\left\Vert \mu+\sum_{i=1}^{n+1} \lambda_i x_{k_i}\right\Vert\geq (1-\varepsilon_{n+1})\left(\left\Vert \mu+\sum_{i=1}^n \lambda_i x_{k_i} \right\Vert+\vert \lambda_{n+1}\vert \right).
$$
Now, the inductive step implies $\Vert \mu+\sum_{i=1}^n \lambda_i x_{k_i} \Vert\geq \prod_{i=1}^n (1-\varepsilon_i)(1+\sum_{i=1}^n\vert \lambda_i\vert)$, so
\begin{align*}
\left\Vert \mu+\sum_{i=1}^{n+1} \lambda_i x_{k_i}\right\Vert& \geq (1-\varepsilon_{n+1})\left(\prod_{i=1}^n (1-\varepsilon_i)\left(1+\sum_{i=1}^n\vert \lambda_i\vert\right ) +\vert \lambda_{n+1}\vert\right)\\
& \geq (1-\varepsilon_{n+1})\left(\prod_{i=1}^n (1-\varepsilon_i)\left(1+\sum_{i=1}^n\vert \lambda_i\vert\right ) +\prod_{i=1}^n (1-\varepsilon_i)\vert \lambda_{n+1}\vert\right)\\
& =\prod\limits_{i=1}^{n+1}(1-\varepsilon_i)\left(1+\sum_{i=1}^{n+1} \vert \lambda_i\vert \right)
\end{align*}
which finishes the proof of the construction of $x_{k_n}$.

As we have $x_n\in S_n$ for every $n\in\mathbb N$ and $\{S_n\colon  n\in\mathbb N\}$ is determining for $\mu$, we conclude $\mu\in \overline{\conv}(\{x_n\colon n\in\mathbb N\})$. Consequently, we can find $(\lambda_n)\subseteq [0,1]$ with $\sum_{n=1}^\infty \lambda_n=1$ and only finitely many $\lambda_n$ being non-zero, satisfying
\begin{equation}\label{eq:freeapprconv}
\left\Vert \mu-\sum_{n=1}^\infty \lambda_n x_n\right\Vert<\eta.
\end{equation}
If we evaluate at $f$, we obtain
\begin{align*}
\eta>f\left(\mu-\sum_{n=1}^\infty \lambda_n x_n\right) &=f(\mu)-\sum_{n=1}^\infty \lambda_n f(x_n) >\alpha-\sum_{n\in I}\lambda_n f(x_n)-\sum_{n\in J}\lambda_n f(x_n)\\
& \geq \alpha-\beta\sum_{n\in I}\lambda_n -\sum_{n\in J}\lambda_n =\alpha-\beta\left(1-\sum_{n\in J}\lambda_n\right)-\sum_{n\in J}\lambda_n \\ &=\alpha-\beta-(1-\beta)\sum_{n\in J}\lambda_n,
\end{align*}
hence
\begin{equation}\label{eq:sumJmass}
\sum_{n\in J}\lambda_n>\frac{\alpha-\beta-\eta}{1-\beta}.
\end{equation}
On the other hand, by the construction of $x_n, n\in J$, we have from (\ref{eq:condixknnolocal}) that
$$\left\Vert \mu-\sum_{n\in J}\lambda_n x_n \right\Vert> (1-\varepsilon)\Big(1+\sum_{n\in J}\lambda_n\Big).$$
These estimations imply that
\begin{align*}
    \eta &> \norm{\mu-\sum_{n=1}^{\infty}\lambda_nx_n}\geq \norm{\mu-\sum_{n\in J}\lambda_nx_n} - \norm{\sum_{n\in I}\lambda_nx_n}\\
    &>(1-\varepsilon)\Big(1+\sum_{n\in J}\lambda_n\Big) - \sum_{n\in I}\lambda_n = (1-\varepsilon)\Big(1+\sum_{n\in J}\lambda_n\Big) - \Big(1-\sum_{n\in J}\lambda_n\Big) \\
    &=2\sum_{n\in J}\lambda_n - \varepsilon\Big(1+\sum_{n\in J}\lambda_n\Big)\geq 2\sum_{n\in J}\lambda_n - 2\varepsilon > 2\frac{\alpha-\beta-\eta}{1-\beta}-2\varepsilon\\
    &>\frac{\alpha-\beta-\eta}{1-\beta}-2\varepsilon.
    \end{align*}
    This, in turn, claims that $\alpha-\beta < (1-\beta)(\eta+2\varepsilon)+\eta$, which disagrees with (\ref{eq:epsetafree}). This contradiction finishes the proof.
\end{proof}

We are now ready to present the pending proof.

\begin{proof}[Proof of Theorem~\ref{theo:freecomplete}]
\parskip=0ex
The proof of (ii)$\Rightarrow$(i) is immediate since every denting point is SCD (Example~\ref{example:dentingpoints}) and  the set of SCD points of a set is closed and convex (Lemma~\ref{lemma: properties of SCD(A)}).

For the proof of (i)$\Rightarrow$(ii), assume that $\mu\in\SCD(B_{\mathcal F(M)})$ and take a determining sequences of slices $S_n=S(B_{\mathcal F(M)}, f_n,\alpha_n)$. By Lemma \ref{lemma:freenonlocalsequence} we can assume with no loss of generality that $f_n\in S_{\Lip_0(M)}$ is non-local for every $n\in\mathbb N$. Given $n\in\mathbb N$, since $M$ is complete and $f_n$ is non-local, \cite[Proposition~2.7]{veeorg2022characterizations} implies that there exists a denting point $x_n$ of $B_{\mathcal F(M)}$ with $x_n\in S_n$ for every $n\in\mathbb N$. Since $\{S_n\colon n\in \N\}$ is determining, we get that $\mu\in \overline{\conv}(\{x_n\colon n\in\mathbb N\})\subseteq \overline{\conv}(\dent(B_{\mathcal F(M)})$, as requested.

The final remark follows since $\mathcal F(M)$ is separable because $M$ is separable.
\end{proof}

By additionally requiring the underlying metric space of the Lipschitz-free space to be compact and taking into account the proof of Theorem \ref{theo:freecomplete}, we can prove the following.

\begin{theorem}\label{theo:freecompact}
Let $M$ be a compact space and let $\mu\in S_{\mathcal F(M)}$. The following are equivalent:
\begin{enumerate}
    \item[\rm{(i)}] $\mu\in\SCD(B_{\mathcal F(M)})$;
    \item[\rm{(ii)}] $\mu\in \overline{\conv}\bigl(\strexp(B_{\mathcal F(M)})\bigr)$.
\end{enumerate}
In particular, $B_{\mathcal F(M)}$ is an SCD set if, and only if, $B_{\mathcal F(M)}=\overline{\conv}\bigl(\strexp(B_{\mathcal F(M)})\bigr)$.
\end{theorem}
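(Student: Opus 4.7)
The plan is to mirror the proof of Theorem \ref{theo:freecomplete} almost verbatim, with the improvement being only in the final step where the existence of a \emph{denting} point in each slice is upgraded to the existence of a \emph{strongly exposed} point thanks to the compactness assumption.

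The implication (ii)$\Rightarrow$(i) is immediate and requires no new ideas: every strongly exposed point is a denting point, hence an SCD point by Example~\ref{example:dentingpoints}, and $\SCD(B_{\mathcal{F}(M)})$ is closed and convex by Lemma~\ref{lemma: properties of SCD(A)}. Therefore $\overline{\conv}\bigl(\strexp(B_{\mathcal F(M)})\bigr)\subseteq \SCD(B_{\mathcal F(M)})$.

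For (i)$\Rightarrow$(ii), the plan is as follows. Assume $\mu\in \SCD(B_{\mathcal F(M)})$ and fix a determining sequence of slices $S_n=S(B_{\mathcal F(M)},f_n,\alpha_n)$ with $f_n\in S_{\Lip_0(M)}$. Note that compact metric spaces are complete, so Lemma~\ref{lemma:freenonlocalsequence} still applies and, after discarding those indices where $f_n$ is local, we may assume every $f_n$ is non-local. At this point the proof of Theorem~\ref{theo:freecomplete} produced a denting point $x_n\in S_n$. In the compact setting we would instead invoke the compact analogue of \cite[Proposition~2.7]{veeorg2022characterizations} — namely, that when $M$ is compact, any slice of $B_{\mathcal F(M)}$ determined by a non-local Lipschitz function contains a \emph{strongly exposed} point (this is precisely the strengthening afforded by compactness, since in a compact metric space molecules supported on pairs of close points relate nicely to strongly exposing functionals). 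Picking such $x_n\in S_n\cap \strexp(B_{\mathcal F(M)})$ and using that $\{S_n\}$ is determining for $\mu$ yields
\[
\mu\in\overline{\conv}\bigl(\{x_n\colon n\in\N\}\bigr)\subseteq \overline{\conv}\bigl(\strexp(B_{\mathcal F(M)})\bigr).
\]

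For the final equivalence, if $B_{\mathcal F(M)}=\overline{\conv}\bigl(\strexp(B_{\mathcal F(M)})\bigr)$ then the already-proved pointwise implication gives $\SCD(B_{\mathcal F(M)})=B_{\mathcal F(M)}$; since $M$ compact forces $\mathcal F(M)$ to be separable, Lemma~\ref{lemma: connection between SCD sets and SCD points}(2) then yields that $B_{\mathcal F(M)}$ is an SCD set. Conversely, if $B_{\mathcal F(M)}$ is SCD then every $\mu\in B_{\mathcal F(M)}$ is an SCD point by Lemma~\ref{lemma: connection between SCD sets and SCD points}(1), so the equivalence (i)$\Leftrightarrow$(ii) applied at each norm-one $\mu$, together with a homogeneity/absolutely convex hull argument (the set of SCD points is balanced in a balanced set by Lemma~\ref{lemma: properties of SCD(A)}), gives the required representation. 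The main obstacle is pinpointing the exact compact-case statement in \cite{veeorg2022characterizations} guaranteeing that slices defined by non-local functionals contain strongly exposed — rather than merely denting — points; once that input is cited, the argument is a line-by-line adaptation of Theorem~\ref{theo:freecomplete}.
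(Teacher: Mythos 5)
Your proposal is correct and follows essentially the same route as the paper: reduce to non-local $f_n$ via Lemma~\ref{lemma:freenonlocalsequence} (valid since compact metric spaces are complete) and then upgrade the denting point in each slice to a strongly exposed one. The only gap you flag --- the precise compact-case input --- is resolved in the paper by citing \cite[Lemma~3.13]{cgmr21} (not \cite{veeorg2022characterizations}), which states that for $M$ compact a non-local norm-one Lipschitz function attains its norm at a strongly exposed point of $B_{\mathcal F(M)}$, and that point clearly lies in the corresponding slice.
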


\begin{proof}
\parskip=0ex
The proof of (ii)$\Rightarrow$(i) is immediate since every strongly exposed point is denting and because the set of SCD points is closed and convex.

For the proof of (i)$\Rightarrow$(ii), assume that $\mu\in\SCD(B_{\mathcal F(M)})$  and take a determining sequence of slices $S_n=S(B_{\mathcal F(M)}, f_n,\alpha_n)$. By Lemma \ref{lemma:freenonlocalsequence} we can assume with no loss of generality that $f_n\in S_{\Lip_0(M)}$ is non-local for every $n\in\mathbb N$. Given $n\in\mathbb N$, since $M$ is compact and $f_n$ is non-local, \cite[Lemma 3.13]{cgmr21} implies that $f_n$ attains its norm at a strongly exposed point $x_n$ of $B_{\mathcal F(M)}$ which clearly belongs to $S_n$.  Since $\{S_n\colon n\in \N\}$ is determining for $\mu$, we get that $\mu\in \overline{\conv}(\{x_n\colon  n\in\mathbb N\})\subseteq \overline{\conv}\bigl(\strexp(B_{\mathcal F(M)})\bigr)$, as requested.

The last statement follows since $\mathcal F(M)$ is separable, because $M$ is separable as being a compact metric space.
\end{proof}

\section{Applications}\label{sec: applications}

In this section, we aim to establish some connections between SCD points and some related properties.

We begin by studying the inheritance of Daugavet property to its subspaces. It is known that if $X$ is a Banach space with the Daugavet property and $Y$ is a subspace of $X$, then $Y$ has the Daugavet property whenever $Y$ is an $M$-ideal in $X$ \cite[Proposition~2.10]{kadets1997Daugavet} or $(X/Y)^*$ is separable \cite[Theorem~2.14]{kadets1997Daugavet}. We now enlarge this list by proving that the subspace $Y$ also inherits the Daugavet property whenever $0\in B_{X/Y}$ is an SCD point in every convex set $C\subseteq B_{X/Y}$ containing it.

\begin{theorem}\label{theo:inheridaugascd}
Let $X$ be a Banach space and let $Y$ be a subspace of $X$. Assume that $X$ has the Daugavet property and that $0\in X/Y$ satisfies that $0$ is an SCD point in any convex subset $C\subseteq B_{X/Y}$ containing it. Then, $Y$ has the Daugavet property.
\end{theorem}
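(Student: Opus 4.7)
The plan is to argue by contradiction. Suppose $Y$ fails the Daugavet property: there exist $y_0\in S_Y$, $y^*\in S_{Y^*}$, and $\alpha,\varepsilon\in(0,1)$ with $\|y_0+y\|\leq 2-\varepsilon$ for every $y\in S(B_Y,y^*,\alpha)$. Extend $y^*$ to $x^*\in S_{X^*}$ via Hahn--Banach and let $q\colon X\to X/Y$ denote the quotient map. Setting
$$
D:=\{x\in B_X:\,x^*(x)>1-\alpha/3,\,\|y_0+x\|>2-\varepsilon/3\},
$$
a direct normalization argument gives $\rho>0$ such that $\|q(x)\|\geq\rho$ for every $x\in D$: any $y\in Y$ within $\rho$ of such an $x$, once slightly rescaled, would belong to $S(B_Y,y^*,\alpha)$ while satisfying $\|y_0+y\|>2-\varepsilon$, contradicting the failure of the Daugavet property in $Y$.

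The next step is to form the convex set $C:=\overline{\conv}(q(D)\cup\{0\})\subseteq B_{X/Y}$ and verify first that $0\in\overline{\conv}(q(D))$: if not, Hahn--Banach yields $\xi\in Y^\perp=(X/Y)^*$ and $\gamma>0$ with $\xi\geq\gamma$ on $q(D)$, and then
$$
W:=\{x\in B_X:\,x^*(x)>1-\alpha/3,\,\xi(x)<\gamma/2\}
$$
is a non-empty relatively weakly open subset of $B_X$ (since any $y\in S(B_Y,y^*,\alpha/3)$ lies in $W$ because $\xi$ vanishes on $Y$), so the Daugavet property of $X$ produces $x\in W$ with $\|y_0+x\|>2-\varepsilon/3$, that is, $x\in D\cap W$, a contradiction.

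With $0\in C$ in hand, the standing hypothesis provides a determining sequence of slices $T_n=S(C,\xi_n,\beta_n)$ of $C$ for $0$, with $\xi_n\in Y^\perp$. Writing $\hat\xi_n\in X^*$ for the corresponding functional and $M_n:=\sup_C\xi_n$, the sets
$$
\tilde T_n:=\{x\in B_X:\,\hat\xi_n(x)>M_n-\beta_n/2\}\cap\{x\in B_X:\,x^*(x)>1-\alpha/3\}
$$
are non-empty relatively weakly open subsets of $B_X$ (witnessed by points of $D$ almost attaining $M_n$). The main obstacle lies here: by a standard iterated Daugavet $\ell_1$-construction (a strengthening of Lemma~\ref{lemma: Daugavet x_0 not in closed linear hull} via Bourgain's lemma and a finite-net argument on the growing finite-dimensional span), one selects, for a fixed $\delta\in(0,\varepsilon/6)$ with $2(1-\delta)>2-\varepsilon/3$, points $x_n\in\tilde T_n$ such that $\{y_0,x_1,x_2,\dots\}$ is $(1-\delta)$-equivalent to the $\ell_1$-basis; in particular, every finite convex combination $w=\sum_i\mu_i x_{n_i}$ satisfies $\|y_0+w\|\geq 2(1-\delta)>2-\varepsilon/3$.

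To finish, put $c_n:=q(x_n)$. Since $\hat\xi_n(x_n)>M_n-\beta_n/2>\sup_C\xi_n-\beta_n$, one has $c_n\in T_n$, and Proposition~\ref{proposition: equivalent conditions of being determining for point}(iii) forces $0\in\overline{\conv}\{c_n:n\in\mathbb N\}$. On the other hand, any finite convex combination $w=\sum_i\mu_i x_{n_i}$ satisfies $x^*(w)>1-\alpha/3$ (each $x_{n_i}$ does) together with $\|y_0+w\|>2-\varepsilon/3$, so $w\in D$ and hence $\|\sum_i\mu_i c_{n_i}\|=\|q(w)\|\geq\rho$. Therefore $0\notin\overline{\conv}\{c_n\}$, the desired contradiction.
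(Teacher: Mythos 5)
Your argument is correct, and it runs on the same engine as the paper's proof: apply the SCD-point hypothesis to a convex subset of $B_{X/Y}$ obtained as the quotient image of part of a slice of $B_X$, use the iterated Daugavet construction (the relatively-weakly-open analogue of \cite[Lemma~2.8]{kadets1997Daugavet}, via \cite{shvidkoy2000}) to pick points in the preimages of the determining slices that together with $y_0$ are $(1-\delta)$-equivalent to the $\ell_1$-basis, and let the determining property force a convex combination with small quotient norm. The organizational differences are real, though. The paper argues directly: it applies the hypothesis to $C=p(T)$ with $T=S(B_X,y^*,\beta)$, where $0\in p(T)$ comes for free from $T\cap S_Y\neq\emptyset$, and the small-quotient-norm convex combination immediately yields an element of $Y$ which, after normalization, is the required point of $S(B_Y,y^*,\alpha)$ far from $y_0$. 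You argue by contradiction and apply the hypothesis to $C=\overline{\conv}\bigl(q(D)\bigr)$, where $D$ is the portion of the slice consisting of points already far from $-y_0$; the contradiction hypothesis gives the uniform separation $\|q(x)\|\geq\rho$ on $D$, which the determining property then violates. This costs you two extra (correctly executed) verifications the paper does not need --- the $\rho$-separation itself, and the fact that $0\in\overline{\conv}\bigl(q(D)\bigr)$, which consumes one additional application of the Daugavet property of $X$ and is genuinely needed so that every slice of $C$ meets $q(D)$ --- in exchange for never having to renormalize the approximating element of $Y$ back into the slice. The trade is roughly even; both proofs rely on exactly the same two key lemmas.
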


\begin{proof}
\parskip=0ex
Let $y_0\in S_Y$, $\varepsilon>0$ and let $S=S(B_Y,y^*,\alpha)$ be a slice of $B_Y$ with $y^*\in S_{X^*}$. Let us find $y\in S$ such that $\Vert y_0-y\Vert>2-\varepsilon$. In such a case, we will have that $Y$ has the Daugavet property by \cite[Lemma~2.2]{kadets1997Daugavet}. In order to do so, write $T=S(B_X,y^*,\beta)$ for some $0<\beta<\alpha$. Let $p\colon X\longrightarrow X/Y$ be the quotient mapping and notice that, since $T\cap S_Y\neq \emptyset$, we can guarantee that $0$ belongs to the convex subset $p(T)$ of $B_{X/Y}$. By the assumption, $0\in \SCD(p(T))$, so we can find a determining sequence of slices $S_n\subseteq p(T)$ for $0$. Now, observe that the sets
$$
W_n:=T\cap p^{-1}(S_n)\qquad (n\in \N)
$$
are non-empty, weakly open, and contained in $B_X$.

Since $X$ has the Daugavet property, then for any $\delta>0$, there exists a sequence $(x_n)$ with $x_n\in W_n$ for every $n\in \N$, satisfying that
\begin{equation}\label{eq:ineritdauga}\left\Vert y_0-\sum_{n=1}^\infty \lambda_n x_n \right\Vert>2-\delta
\end{equation}
for every $(\lambda_n)\in S_{\ell_1}$. Indeed, this can be seen using the analogue of \cite[Lemma~2.8]{kadets1997Daugavet} for relative weakly open set which holds applying in the proof \cite[Lemma~3]{shvidkoy2000} instead of \cite[Lemma~2.1~(a)]{kadets1997Daugavet}.

Since $x_n\in W_n=T\cap p^{-1}(S_n)$, we clearly have that $p(x_n)\in S_n$. Taking into account that the sequence $\{S_n\colon n\in\mathbb{N}\}$ is determining for $0\in p(T)$, there exists $(\lambda_n)\in S_{\ell_1}$ such that
$$
\left\Vert \sum_{n=1}^\infty \lambda_n p(x_n) \right\Vert_{X/Y}<\delta.
$$
This means that there exists $z\in Y$ such that
$$\left\Vert z-\sum_{n=1}^\infty \lambda_n x_n\right\Vert<\delta.$$
Observe that $\Vert z\Vert\leq 1+\delta$. Furthermore, \eqref{eq:ineritdauga} implies that $\Vert \sum_{n=1}^\infty \lambda_n x_n\Vert>1-\delta$ and thus $\Vert z\Vert\geq 1-2\delta$. Finally set $y=\frac{z}{\Vert z\Vert}$. It is clear that $\Vert y-z\Vert\leq 2\delta$ so $\Vert y-\sum_{n=1}^\infty \lambda_n x_n\Vert\leq 3\delta$ by the triangle inequality. Let us prove that $y$ fits our requirements. Firstly, since $x_n\in W_n\subseteq T$ and $T$ is convex, we infer $\sum_{n=1}^\infty \lambda_n x_n\in T$, so $\re y^*(\sum_{n=1}^\infty \lambda_n x_n)>1-\beta$. Hence $\re y^*(y)>1-\beta-3\delta$. On the other hand, \eqref{eq:ineritdauga} implies
$$\Vert y_0-y\Vert>2-4\delta.$$
Consequently, $y\in S(B_Y,y^*,\alpha)$ and $\Vert y_0-y\Vert>2-\varepsilon$ as soon as $4\delta<\varepsilon$ and $\beta+3\delta<\alpha$, which finishes the proof.
\end{proof}

The following particular case is specially interesting and extends some of the previously known cases exposed before. If follows directly from Theorem~\ref{theo:inheridaugascd} by using Example~\ref{example:stronglyregular-set}.

\begin{corollary}
Let $X$ be a Banach space with the Daugavet property and let $Y$ be a subspace of $X$. If $X/Y$ is strongly regular (in particular, CPCP or RNP), then $Y$ has the Daugavet property.
\end{corollary}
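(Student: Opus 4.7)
The plan is to verify that the hypothesis of Theorem~\ref{theo:inheridaugascd} is satisfied whenever $X/Y$ is strongly regular, and then invoke that theorem directly. Recall that Theorem~\ref{theo:inheridaugascd} requires that $0 \in X/Y$ be an SCD point of \emph{every} convex subset $C \subseteq B_{X/Y}$ containing it; so the whole task reduces to producing SCD points from strong regularity.

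First, I would observe that if $B_{X/Y}$ is strongly regular, then so is $\overline{C}$ for every convex subset $C \subseteq B_{X/Y}$. This is immediate from the definition of strong regularity (which quantifies over \emph{all} non-empty convex subsets of the set in question): every non-empty convex subset $L$ of $\overline{C}$ is also a non-empty convex subset of $B_{X/Y}$, so it admits convex combinations of slices of arbitrarily small diameter. The cases where $X/Y$ has the RNP or the CPCP are covered via the standard implications RNP $\Rightarrow$ CPCP $\Rightarrow$ strongly regular (Bourgain's lemma for the second implication).

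Next, for any convex $C \subseteq B_{X/Y}$ with $0\in C$, Example~\ref{example:stronglyregular-set} applies to $C$: since $\overline{C}$ is strongly regular, $\SCD(C) = C$, and in particular $0 \in \SCD(C)$. This verifies the hypothesis of Theorem~\ref{theo:inheridaugascd}, which then yields that $Y$ has the Daugavet property.

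There is no real obstacle here; the corollary is essentially a bookkeeping combination of Theorem~\ref{theo:inheridaugascd} with Example~\ref{example:stronglyregular-set}. The only thing worth spelling out is the hereditary character of strong regularity for convex subsets, so that the hypothesis of the example is fulfilled for every $C$ and not merely for $B_{X/Y}$ itself.
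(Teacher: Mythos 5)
Your proof is correct and follows exactly the route the paper takes (the paper's proof is the one-line remark that the corollary ``follows directly from Theorem~\ref{theo:inheridaugascd} by using Example~\ref{example:stronglyregular-set}''). The only detail you add --- that strong regularity passes from $B_{X/Y}$ to the closure of every convex subset, since the defining condition is intrinsic to the convex subsets $L$ --- is the right bookkeeping step and is indeed what makes Example~\ref{example:stronglyregular-set} applicable to every convex $C\ni 0$, as required by Theorem~\ref{theo:inheridaugascd}.
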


Our next aim is to show that separable Banach spaces for which every convex series of slices of $B_X$ intersects the sphere, must contain an isomorphic copy of $\ell_1$.

\begin{theorem}\label{thm: separable X contains ell1 whenever X* fails (-1)-BCP}
Let $X$ be a Banach space such that every convex series of slices of $B_X$ intersects the unit sphere. Then, $\SCD(B_X)=\emptyset$. If, moreover, $X$ is separable, then $X$ is not an SCD space and, in particular, it contains an isomorphic copy of $\ell_1$.
\end{theorem}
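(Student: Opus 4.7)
My plan is to first establish $\SCD(B_X)=\emptyset$ and then quote Example~\ref{example:examples of SCD sets}.(b) to extract the copy of $\ell_1$ in the separable case.

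For the first part, Corollary~\ref{remark: 0 being SCD} reduces the problem to showing that $0\notin\SCD(B_X)$, and I would proceed by contradiction. Assume a determining sequence of slices $\{S_n\colon n\in\N\}$ for $0$, which after rescaling may be written as $S_n=S(B_X,x_n^*,\alpha_n)$ with $\norm{x_n^*}=1$. The decisive move is to feed a \emph{single} convex series manufactured from this sequence into the standing hypothesis, namely $C:=\sum_{n=1}^{\infty}2^{-n}S_n$, which by assumption must meet $S_X$. This gives $z_n\in S_n$ with $z:=\sum_{n=1}^{\infty}2^{-n}z_n\in S_X$. Choosing a Hahn--Banach functional $x^*\in S_{X^*}$ with $\re x^*(z)=1$, the identity $\sum_{n=1}^{\infty}2^{-n}\re x^*(z_n)=1$ together with the pointwise bounds $\re x^*(z_n)\leq 1$ forces $\re x^*(z_n)=1$ for every $n$. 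Hence $\re x^*\equiv 1$ on $\overline{\conv}\{z_n\colon n\in\N\}$, contradicting $0\in\overline{\conv}\{z_n\colon n\in\N\}$, which is guaranteed by the determining property via Proposition~\ref{proposition: equivalent conditions of being determining for point}.(iii).

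For the second assertion, once $\SCD(B_X)=\emptyset$ is established and $X$ is assumed separable, $B_X$ cannot be an SCD set (otherwise Lemma~\ref{lemma: connection between SCD sets and SCD points}.(1) would make every point of $B_X$ an SCD point), so $X$ is not an SCD space. Example~\ref{example:examples of SCD sets}.(b) then immediately yields an isomorphic copy of $\ell_1$ inside $X$.

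The step I expect to be the crux is the observation that the hypothesis---\emph{a priori} a statement about all convex series of slices---is already witnessed by the single series assembled from the determining sequence, and that a norm-one element of $\sum 2^{-n}S_n$ exhibits enough rigidity (each coefficient $\re x^*(z_n)$ being pinned to $1$) to be incompatible with $0$ lying in the closed convex hull of the $z_n$. The remaining pieces are routine bookkeeping, and the $\ell_1$-extraction is a direct citation.
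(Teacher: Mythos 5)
Your proof is correct and follows essentially the same route as the paper: reduce to showing $0\notin\SCD(B_X)$ via Corollary~\ref{remark: 0 being SCD}, use the hypothesis to produce points $z_n\in S_n$ together with a norm-one functional whose real part equals $1$ at every $z_n$ (hence separating $0$ from $\overline{\conv}\{z_n\colon n\in\N\}$, contradicting Proposition~\ref{proposition: equivalent conditions of being determining for point}.(iii)), and then settle the separable case by the non-$\ell_1$ criterion. The only difference is that the paper outsources the key step to \cite[Lemma~2.8]{CIACI2022126185}, whereas you prove it inline by feeding the single series $\sum_{n=1}^{\infty}2^{-n}S_n$ into the hypothesis and pinning each $\re x^*(z_n)$ to $1$ via a Hahn--Banach norming functional; this rigidity argument is precisely the content of the cited lemma, so your version is a welcome self-contained rendering rather than a genuinely different proof.
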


\begin{proof}
\parskip=0ex
Assume that every convex series of slices of $B_X$ intersects the sphere. This means that for every sequence of slices $\{S_n\colon n\in\mathbb{N}\}$ of $B_X$ there are $x_n\in S_n\cap S_X$ and $x^*\in S_{X^*}$ such that $x^*(x_n)=1$ for every $n\in \mathbb{N}$ \cite[Lemma~2.8]{CIACI2022126185}. In particular, $0\notin \overline{\conv}(\{x_n\colon n\in \mathbb{N}\})$. This implies that $0\notin \SCD(B_X)$, hence $\SCD(B_X)=\emptyset$ by Corollary~\ref{remark: 0 being SCD}. If moreover $X$ is separable, it cannot be an SCD space, so $X$ contains a $\ell_1$-sequence (see \cite[Theorem 2.22]{aviles_slicely_2010}).
\end{proof}

Some comments on the above result are pertinent.

\begin{remark}
We collect here some comments on Theorem~\ref{thm: separable X contains ell1 whenever X* fails (-1)-BCP}:
\begin{enumerate}
\item {\slshape It also follows from the above theorem that if every convex series of slices of $B_X$ intersects the unit sphere, then $B_X$ does not contain strongly regular points (by using Proposition~\ref{proposition: SCS is SCD}), but this follows directly from the hypothesis and it does not need Theorem~\ref{thm: separable X contains ell1 whenever X* fails (-1)-BCP}.}

\item {\slshape The assumption on convex series of slices intersecting the sphere cannot be relaxed to (finite) convex combinations of slices intersecting the unit sphere.\ } Indeed, the separable Banach space $c_0$ has the property that every finite convex combination of slices of $B_X$ intersects the unit sphere \cite{Abrahamsen_Lima_2018}, but it does not contain $\ell_1$.

\item {\slshape Note that the second part of Theorem~\ref{thm: separable X contains ell1 whenever X* fails (-1)-BCP} cannot be extended to non-separable Banach spaces.\ } Indeed, if $I$ is uncountable, then every convex series of slices of $B_{c_0(I)}$ intersects the unit sphere \cite[Proposition~3.6 and Example~6.2]{CIACI2022126185}, but $c_0(I)$ does not contain $\ell_1$.
\end{enumerate}
\end{remark}

Our final application shows the validity of the Daugavet equation \eqref{eq:DE} (i.e.\ $\|\Id+T\|=1+\|T\|$) for those operators $T$ on a Banach space with the Daugavet property for which one can almost reach the norm or $T$ using SCD points of $T(B_X)$. It is actually the pointwise version of \cite[Proposition~5.8]{aviles_slicely_2010}.

\begin{theorem}\label{theorem: SCD points guarantee DE}
Let $X$ be a Banach space with the Daugavet property and $T\in \mathcal{L}(X,X)$. Suppose that for every $\varepsilon>0$ there exists an element $Tx\in \SCD\bigl(T(B_X)\bigr)$ such that $\norm{Tx}> \|T\|- \varepsilon$. Then, $T$ satisfies \eqref{eq:DE}.
    \end{theorem}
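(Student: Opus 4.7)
The plan is to establish $\norm{\Id+T}\geq 1+\norm{T}$, the reverse inequality being trivial. Fix $\varepsilon>0$ and, assuming $\norm{T}>\varepsilon$ (otherwise nothing is needed), use the hypothesis to pick $x_0\in B_X$ with $Tx_0\in\SCD(T(B_X))$ and $\norm{Tx_0}>\norm{T}-\varepsilon$. Set $y_0:=Tx_0/\norm{Tx_0}\in S_X$, and fix a determining sequence $\{S_n\colon n\in\N\}$ for $Tx_0$ consisting of slices $S_n=S(T(B_X),x_n^*,\alpha_n)$ of $T(B_X)$.

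Next, I would pull these slices back to $B_X$: set $\tilde S_n:=T^{-1}(S_n)\cap B_X$, which coincides with the slice $S(B_X,T^*x_n^*,\alpha_n)$ when $T^*x_n^*\neq 0$ and equals $B_X$ otherwise; in any case $\tilde S_n$ is a non-empty relatively weakly open convex subset of $B_X$, and $z\in\tilde S_n$ if and only if $Tz\in S_n$. Hence for any selection $z_n\in\tilde S_n$ one has $Tz_n\in S_n$, and the determining property (Proposition~\ref{proposition: equivalent conditions of being determining for point}~(iii)) guarantees $Tx_0\in\overline{\conv}\{Tz_n\colon n\in\N\}$.

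The heart of the argument is the Daugavet-type selection lemma already invoked in the proof of Theorem~\ref{theo:inheridaugascd} (the relatively-weakly-open-set analogue of \cite[Lemma~2.8]{kadets1997Daugavet}, via \cite[Lemma~3]{shvidkoy2000}): since $X$ has the Daugavet property and $y_0\in S_X$, we may choose the $z_n\in\tilde S_n$ so that
\[
\norm{y_0+\sum_{n=1}^{\infty}\lambda_n z_n}>2-\varepsilon\quad\text{for every }(\lambda_n)\in S_{\ell_1}.
\]
This is the main obstacle: not only must each $z_n$ be almost diametrically opposite to $-y_0$ (which is routine from a single slice), but this alignment has to persist under every $\ell_1$-combination of the chosen points, which is where the full strength of the Daugavet property enters.

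To finish, use the determining property to pick a finite convex combination $z=\sum_n\lambda_n z_n\in B_X$ with $\norm{Tz-Tx_0}<\varepsilon$. The Hahn--Banach theorem applied to $\norm{y_0+z}>2-\varepsilon$ provides $y^*\in S_{X^*}$ with $\real y^*(y_0)>1-\varepsilon$ and $\real y^*(z)>1-\varepsilon$, whence
\[
\norm{z+Tx_0}\geq \real y^*(z+\norm{Tx_0}y_0)>(1-\varepsilon)(1+\norm{Tx_0}).
\]
Combining this with $\norm{Tz-Tx_0}<\varepsilon$ via the triangle inequality and using $\norm{z}\leq 1$ yields
\[
\norm{\Id+T}\geq \norm{z+Tz}>(1-\varepsilon)(1+\norm{T}-\varepsilon)-\varepsilon,
\]
and letting $\varepsilon\to 0$ completes the proof.
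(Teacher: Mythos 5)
Your proof is correct, but it follows a genuinely different route from the paper's. The paper localizes the argument of \cite[Proposition~5.8]{aviles_slicely_2010}: it introduces the boundary $K(X^*)=S_{X^*}\cap\overline{\ext(B_{X^*})}^{w^*}$ and the sets $D(S,\varepsilon)$, invokes \cite[Proposition~4.3~(v)]{aviles_slicely_2010} (this is where the Daugavet property enters) to find a single functional $y^*$ whose slice $S(B_X,y^*,\varepsilon)$ both contains $Tx$ and meets every pulled-back slice $T^{-1}(S_n)$, and then uses the determining property to extract an $x_{n_0}\in S(B_X,y^*,\varepsilon)$ with $\re y^*(x_{n_0}+Tx_{n_0})>2-3\varepsilon$. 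You instead feed the Daugavet property in through the quantitative $\ell_1$-selection lemma (the relatively-weakly-open analogue of \cite[Lemma~2.8]{kadets1997Daugavet}, exactly the tool the paper uses in Theorem~\ref{theo:inheridaugascd}): you choose $z_n$ in the pulled-back slices $T^{-1}(S_n)\cap B_X$ so that every convex combination stays nearly diametral to $-y_0$, let the determining property produce $z=\sum\lambda_n z_n$ with $Tz\approx Tx_0$, and finish with a direct Hahn--Banach separation. All the individual steps check out: the identification $T^{-1}(S_n)\cap B_X=S(B_X,T^*x_n^*,\alpha_n)$ (or $B_X$ when $T^*x_n^*=0$) is right, the suprema match, and the final estimate $\|\Id+T\|\geq\|z+Tz\|>(1-\varepsilon)(1+\|T\|-\varepsilon)-\varepsilon$ gives the claim as $\varepsilon\to0$. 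What each approach buys: yours is more self-contained within this paper, reusing the same selection lemma as the subspace-inheritance theorem and avoiding the $K(X^*)$/$D(S,\varepsilon)$ machinery entirely; the paper's version stays structurally parallel to the original SCD-set result it is generalizing, which makes the "pointwise version of Proposition~5.8" claim transparent. Both are complete proofs.
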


    \begin{proof}
\parskip=0ex
        Let us first introduce some notation. We denote $K(X^*)$ as the intersection of $S_{X^*}$ with the weak$^*$-closure in $X^*$ of $\ext(B_{X^*})$, which is a (James) boundary for $X$ (as it contains $\ext(B_{X^*})$). Secondly, for a slice $S$ of $B_X$ and $\varepsilon>0$, write
        \[
        D(S,\varepsilon) = \{y^*\in K(X^*)\colon S\cap \overline{\conv}\big(S(B_X,y^*,\varepsilon)\big)\neq\emptyset\}.
        \]
        Assume now, without loss of generality, that $\norm{T}=1$. By assumption, we can pick $x\in B_X$ such that $Tx$ is an SCD point of $T(B_X)$ and
        \[
       \norm{Tx}>1- \frac{\varepsilon}{2}.
        \]
       Let $\{S_n\colon n\in\mathbb{N}\}$ be a determining sequence of slices of $T(B_X)$ for $Tx$, and observe that $T^{-1}(S_n)\cap B_X$ are slices of $B_X$.
       Since $K(X^*)$ is a boundary for $X$ and it is balanced, one can find $y^*_0\in K(X^*)$ so that $\re y^*_0(Tx)=\norm{Tx}$, which in particular, implies that $\re y^*_0(Tx)>1-\varepsilon/2$.
       By \cite[Proposition~4.3~(v)]{aviles_slicely_2010}, the set $\bigcap_{n\in\mathbb{N}}D(T^{-1}(S_n),\varepsilon/2)$ is weak$^{*}$-dense in $K(X^*)$. Therefore, we are able to find an element $y^*\in\bigcap_{n\in\mathbb{N}}D(T^{-1}(S_n),\varepsilon/2)$ satisfying
         \[
         \bigl|\real(y^*-y^*_0)(Tx)\bigr|<\frac{\varepsilon}{2}.
         \]
         Utilizing the estimations derived above, we obtain
         \[
         \real y^*(Tx) =\real y^*_0(Tx) + \real (y^*-y^*_0)(Tx) > 1-\frac{\varepsilon}{2} - \frac{\varepsilon}{2} = 1-\varepsilon,
         \]
         hence $Tx\in S(B_X,y^*,\varepsilon)$.

         We wish now to use the fact that $Tx$ is an SCD point for $T(B_X)$. In order to do so, observe that for each $n\in\mathbb{N}$
         \[
    \overline{\conv}\big(S(B_X,y^*,\varepsilon)\big)\cap T^{-1}(S_n)\neq \emptyset
         \]
         by the definition of the set $D(T^{-1}(S_n),\varepsilon/2)$. This means that
         \[ T\Big(\overline{\conv}\big(S(B_X,y^*,\varepsilon)\big)\Big)\cap S_n\neq \emptyset
         \]
         for every $n\in\mathbb{N}$, and since $\{S_n\colon n\in\mathbb{N}\}$ is determining for $Tx$, we infer that
         \[
         Tx\in \overline{\conv}\Big(T\Big(\overline{\conv}\big(S(B_X,y^*,\varepsilon)\big)\Big)\Big) = \overline{T\Big(\conv\big(S(B_X,y^*,\varepsilon)\big)\Big)}.
         \]
         This enables us to find
         \[
         z\in T\Big(\conv\big(S(B_X,y^*,\varepsilon)\big)\Big)\text{ such that } \norm{Tx-z}<\varepsilon,
         \]
         which, in particular, implies that
         \[
         \varepsilon>\real y^*(Tx-z) = \real y^*(Tx)-\real y^*(z)
         \]
         and, therefore,
         \begin{equation}\label{eq:z evaluation at y^*}
        \real y^*(z)> \real y^*(Tx)-\varepsilon > 1-\varepsilon - \varepsilon = 1-2\varepsilon.
         \end{equation}
         Notice that $z$ can be represented as
         \[
         z = T\Big(\sum_{n=1}^k \lambda_nx_n\Big) = \sum_{n=1}^k\lambda_nT(x_n),
         \]
         where $x_n\in S(B_X,y^*,\varepsilon)$, $\lambda_n\geq 0$ for $n=1,\dots,k$, and $\sum_{n=1}^k \lambda_k=1$. It follows from \eqref{eq:z evaluation at y^*} that there exists $n_0\in\{1,\dots,k\}$ such that
         \[
         \real y^*\big(T(x_{n_0})\big) > 1-2\varepsilon.
         \]
         Since $x_{n_0}\in S(B_X,y^*,\varepsilon)$, we also have
         \[
         \real y^*\big(x_{n_0}+T(x_{n_0})\big)>2-3\varepsilon,
         \]
         and, consequently,
         \[
         \norm{\Id+T}\geq \norm{x_{n_0}+T(x_{n_0})} > \real y^*\big(x_{n_0}+T(x_{n_0})\big)>2-3\varepsilon.
         \]
         Since $\varepsilon$ was arbitrary, the desired result holds.
    \end{proof}

Let us give an interesting particular case, which extends \cite[Corollary~5.15]{aviles_slicely_2010}.

\begin{corollary}
Let $X$ be a Banach space with the Daugavet property and let $T\in \mathcal{L}(X,Y)$. Suppose that the set of strongly regular points of $\overline{T(B_X)}$ contains elements of norm arbitrarily close to $\|T\|$. Then, $T$ satisfies \eqref{eq:DE}. This happens, in particular, if
$T(B_X)\subset \overline{\conv}\bigl(\dent\bigl(\overline{T(B_X)}\bigr)\bigr)$.
\end{corollary}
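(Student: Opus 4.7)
The plan is to adapt the proof of Theorem~\ref{theorem: SCD points guarantee DE} so that it runs at the level of the closure $\overline{T(B_X)}$; the principal obstacle is that a strongly regular point of $\overline{T(B_X)}$ need not itself lie in $T(B_X)$, so one cannot invoke that theorem as a black box. The key enabling observation is that, because $T(B_X)$ is norm-dense in $\overline{T(B_X)}$, one has $\sup_{\overline{T(B_X)}}\re y^* = \sup_{B_X}\re(T^*y^*)$ for every $y^*\in X^*$, so each slice $S=S(\overline{T(B_X)},y^*,\alpha)$ pulls back to the slice $T^{-1}(S)\cap B_X = S(B_X,T^*y^*,\alpha)$ of $B_X$. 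Combined with Proposition~\ref{proposition: SCS is SCD} (strongly regular points are SCD), this lets the entire argument of Theorem~\ref{theorem: SCD points guarantee DE} transfer to $\overline{T(B_X)}$.

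In detail, I would normalize $\|T\|=1$, fix $\varepsilon>0$, and use the hypothesis to pick a strongly regular $z\in\overline{T(B_X)}$ with $\|z\|>1-\varepsilon/2$. By Proposition~\ref{proposition: SCS is SCD}, $z\in \SCD(\overline{T(B_X)})$; fix a determining sequence of slices $S_n=S(\overline{T(B_X)},y^*_n,\alpha_n)$ for $z$. Select $y^*_0\in K(X^*)$ with $\re y^*_0(z)=\|z\|$ (possible since $K(X^*)$ is a James boundary) and, by \cite[Proposition~4.3~(v)]{aviles_slicely_2010} applied to the pulled-back slices $T^{-1}(S_n)\cap B_X$ of $B_X$, pick $y^*\in K(X^*)\cap\bigcap_n D(T^{-1}(S_n),\varepsilon/2)$ weak$^*$-close to $y^*_0$ so that $\re y^*(z)>1-\varepsilon$. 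The definition of $D$ gives $\overline{\conv}(S(B_X,y^*,\varepsilon/2))\cap T^{-1}(S_n)\ne\emptyset$ for every $n$, so $T(\overline{\conv}(S(B_X,y^*,\varepsilon/2)))$ is a convex subset of $\overline{T(B_X)}$ meeting every $S_n$; the determining property then forces $z$ into its norm-closure. Approximate $z$ by $Tx'$ with $x'\in \conv(S(B_X,y^*,\varepsilon/2))\subset B_X$ and $\|Tx'-z\|<\varepsilon$; writing $x'=\sum_{n}\lambda_n x_n$ with $x_n\in S(B_X,y^*,\varepsilon/2)$, some index $n_0$ satisfies $\re y^*(Tx_{n_0})>1-2\varepsilon$, while $\re y^*(x_{n_0})>1-\varepsilon/2$. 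Hence $\|\Id+T\|\geq\re y^*(x_{n_0}+Tx_{n_0})>2-5\varepsilon/2$, and letting $\varepsilon\to 0$ yields \eqref{eq:DE}.

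For the ``in particular'' clause, denting points of $\overline{T(B_X)}$ are trivially strongly regular, and the set of strongly regular points is convex: a weighted sum $\lambda C_1+(1-\lambda)C_2$ of two convex combinations of slices of diameter $<\eta$ is again a convex combination of slices of diameter $<\eta$. Hence every element of $\conv(\dent(\overline{T(B_X)}))$ is a strongly regular point of $\overline{T(B_X)}$. Given $\varepsilon>0$, pick $Tx\in T(B_X)$ with $\|Tx\|>\|T\|-\varepsilon/2$ and, using $T(B_X)\subset\overline{\conv}(\dent(\overline{T(B_X)}))$, approximate $Tx$ within $\varepsilon/2$ by a convex combination $w$ of denting points; then $w$ is a strongly regular point of $\overline{T(B_X)}$ with $\|w\|>\|T\|-\varepsilon$, verifying the hypothesis of the first part of the corollary.
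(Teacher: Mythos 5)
Your proof is correct. The paper states this corollary without a written proof, evidently intending it as a direct consequence of Proposition~\ref{proposition: SCS is SCD} (strongly regular points are SCD) together with Theorem~\ref{theorem: SCD points guarantee DE}; your argument follows that same route but adds a genuinely necessary repair. A literal black-box application of Theorem~\ref{theorem: SCD points guarantee DE} does not work here: its hypothesis asks for points $Tx\in\SCD\bigl(T(B_X)\bigr)$, whereas the corollary's strongly regular points live in $\overline{T(B_X)}$ and need not be of the form $Tx$ (and the fact that $\SCD\bigl(\overline{T(B_X)}\bigr)$ is closed, convex, and contains points of norm near $\|T\|$ does not by itself produce such points inside $T(B_X)$). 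You resolve this by rerunning the proof of Theorem~\ref{theorem: SCD points guarantee DE} with $\overline{T(B_X)}$ in place of $T(B_X)$, using that $T^{-1}(S)\cap B_X=S(B_X,T^*y^*,\alpha)$ because $\sup_{\overline{T(B_X)}}\re y^*=\sup_{B_X}\re(T^*y^*)$, and that the relevant convex set $T\bigl(\overline{\conv}(S(B_X,y^*,\varepsilon/2))\bigr)$ still meets every determining slice; the rest of the estimate chain ($\re y^*(x_{n_0}+Tx_{n_0})>2-5\varepsilon/2$) goes through unchanged. The treatment of the ``in particular'' clause --- denting points are strongly regular, the set of strongly regular points of $\overline{T(B_X)}$ is convex, and a finite convex combination of denting points of norm close to $\|T\|$ can be extracted from the hypothesis $T(B_X)\subset\overline{\conv}\bigl(\dent\bigl(\overline{T(B_X)}\bigr)\bigr)$ --- is also correct. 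In short, your write-up supplies the details (and the one nontrivial adjustment) that the paper leaves implicit.
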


\section*{Acknowledgements}
The authors thank M\"{a}rt P\~{o}ldvere for fruitful conversations on the paper's topic, which led to simplifications of the original proofs of Propositions \ref{proposition: 0 is always SCD in infinite absolute sum} and \ref{prop: SCD points in finite p-sum}.(b). They also thank Javier Mer\'{\i} for suggesting that it would be possible to find a Banach space $X$ such that $\SCD(B_X)=\{0\}$.

Part of this work was done during the visits of the first and second-named authors at the University of Granada in March and May 2023, for which they wish to express their gratitude for the warm welcome received.

The research of J.\ Langemets and M.\ L\~{o}o was supported by the Estonian Research Council grant PSG487.

The research of M.\ Mart\'in and A.\ Rueda Zoca was supported by MCIN/AEI/10.13039/501100011033 grant PID2021-122126NB-C31 and by Junta de Andaluc\'ia Grants FQM-0185.

The research of M.\ Mart\'{\i}n was also funded by ``Maria de Maeztu'' Excellence Unit IMAG, reference CEX2020-001105-M funded by MCIN/AEI/10.13039/501100011033.

The research of A.\ Rueda Zoca was also funded by Fundaci\'on S\'eneca ACyT Regi\'on de Murcia grant 21955/PI/22 and by Generalitat Valenciana project CIGE/2022/97.

\bibliographystyle{siam}

\end{document}